\pgfplotsset{height=5cm,width=8cm,compat=newest}
\newtheorem{definition}{Definition}
\newtheorem{theorem}{Theorem}[section]
\newtheorem{proposition}{Proposition}[section]
\newtheorem{lemma}{Lemma}[section]
\newtheorem{remark}{Remark}
\newtheorem{corollary}{Corollary}
\newenvironment{Sproof}{\paragraph{\it Proof sketch.}}{ \hfill$\square$ \vskip 10pt}
\DeclareMathOperator{\im}{im}
 \DeclareMathOperator{\Diff}{\mathfrak{Diff}}
\DeclareMathOperator{\supp}{supp}
\DeclareMathOperator{\vol}{Vol}
\author[V. Gol{\textquotesingle}dshtein]{Vladimir Gol{\textquotesingle}dshtein}
 \address{V. Gol{\textquotesingle}dshtein: Department of Mathematics, Ben Gurion University of the Negev, P. O. Box 653, Beer Sheva, Israel}
 \email{vladimir@bgu.ac.il}
\author[R. Panenko]{Roman Panenko} 
\address{R. Panenko: Department of Mathematics, Ben Gurion University of the Negev, P. O. Box 653, Beer Sheva, Israel}
\email{panenkora@gmail.com}
\title{On the de Rham homomorphism for  $L_\pi$-cohomologies}
\begin{document}
\maketitle

\begin{abstract}
We study the procedure of regularization in the context of the Lipschitz version of de Rham calculus  on metric  simplicial complexes with bounded geometry. 
It provides us with the machinery to handle the de Rham homomorphism for  $L_\pi$-cohomologies. 
In this respect, we obtain the condition resolving the question of triviality of the kernel for de Rham homomorphism. In particular, 
we specify the  non-trivial cohomology classes explicitly  for a sequence of parameters $\pi = \langle p_0,\,p_1,\dots,\,p_n\rangle$ missing non-increasing monotonicity.

\vspace{2mm}
\noindent
\textbf{Key words and phrases: } differential forms, Lipschitz analysis, de Rham complex, mollifier, metric simplicial complex, bounded geometry, de Rham theorem, Whitney form, Sobolev inequality

\vspace{2mm}
\noindent
\textbf{Mathematic Subject Classification 2000: } 58A12, 58A15, 53C65, 57Q05, 57R12, 51F30, 46E30
\end{abstract}
\tableofcontents
\section{Introduction}
\subsection{Notation}
We will use the following notation.  
\begin{itemize}
\item[---] 
$K$~is a metric simplicial complex of bounded geometry, $K[m]$ is an $m$-skeleton of $K$ on the assumption that there exists  an isometric embedding $K[m] \hookrightarrow K$
\vskip 10pt
\item[---]
$
\pi = \langle p_0,\,p_1,\dots,\,p_n\rangle\subset (1,\,\infty) ,~\frac{1}{p_{i+1}}-\frac{1}{p_i}\le\frac{1}{n}
$
\vskip 10pt
\item[---]
$
\Omega_{\pi}(K) = \Big(\Omega^i_{p_i,\,p_{i+1}}(K),\,d^{i}\Big)
$
is a Sobolev-de Rham complex
\vskip 10pt
\item[---]
$
 \Omega_{ \text{\fontsize{5}{6}\selectfont {\rm dRh}}}(U) = \Big( \Omega^{i}_{ \text{\fontsize{5}{6}\selectfont {\rm dRh}}}(U),\,d^{i}\Big)
$
is a de Rham complex of $C^\infty$-forms on $U\subset \mathbb R^n$
\vskip 10pt
\item[---]
$
S\mathscr L_{\pi}(K) = \Big(S \mathscr L^i_{p_i,\,p_{i+1}}(K),\,d^{i}\Big)
$
is a Sobolev-de Rham complex consisting of  smooth on any simplex Lipschitz forms on $K$
\vskip 10pt
\item[---]
$
C_{\pi}(K) = \Big(C^i_{p_i,\,p_{i+1}}(K),\,\partial^{i}\Big)
$
is a  complex of  Sobolev spaces of simplicial cochains
\vskip 10pt
\item[---]
$\mathscr R$~is the de Rham regularization  operator on $\Omega_{\pi}(K)$
\vskip 10pt
\item[---]
$\mathscr A$~is the de Rham homotopy  operator  $\mathscr R \Rightarrow 1_{\Omega_{\pi}}$
\vskip 10pt
\item[---]
$\mathscr I$~is the de Rham type homomorphism  $S{\mathscr L}_{\pi} \rightarrow C_{\pi}$
\vskip 10pt
\item[---]
${\sf Ban}$~is the category of Banach spaces with continuous (bounded) maps as morphisms.  
\end{itemize}
\subsection{Brief summary}
The procedure of regularization introduced by de Rham borrows and generalizes the standard approach  which was elaborated in the theory of Schwartz  distributions. 
Let $\mathscr D^{\prime}(\mathbb R^n)$ be a  space of continuous functionals on the space of compactly supported smooth functions $C^{\infty}_{0}(\mathbb R^n)$ 
endowed with the usual topology.
Let  $T\ast \varphi$ be the convolution of a distribution $T \in \mathscr D^{\prime}(\mathbb R^n)$ and a function $\varphi(y) \in C^{\infty}_{0}(\mathbb R^n)$.
There is a well-known fact (see, for example,  \cite{WR}):
\begin{theorem}
Let  $\varphi_{\varepsilon} \in C^{\infty}_{0}(\mathbb R^n)$ be a sequence of positive functions such that
$$
\int_{\mathbb R^n} \varphi_{\varepsilon}(x)dx  = 1
$$
and $\rm{supp}(\varphi_{\epsilon})$ is a ball with radius $\varepsilon$.
If  $T \in \mathscr D^{\prime}(\mathbb R^n)$  it follows that
$T\ast \varphi_{\varepsilon}  \in C^{\infty}(\mathbb R^n)$ and that  $T\ast \varphi_{\varepsilon} \to T$ in $\mathscr D^{\prime}(\mathbb R^n)$ as $\varepsilon \to 0$.
\end{theorem}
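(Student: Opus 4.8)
The plan is to treat the statement as the composition of two independent facts: that $T \ast \varphi_\varepsilon$ is a genuine smooth function, and that these smooth functions recover $T$ in the weak-$\ast$ topology of $\mathscr D'(\mathbb R^n)$. Throughout I would work from the defining formula $(T\ast\varphi)(x) = \langle T_y,\,\varphi(x-y)\rangle$, where the subscript records that $T$ acts on the variable $y$.

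For smoothness, first I would fix $x$ and note that $y\mapsto \varphi(x-y)$ lies in $C^\infty_0(\mathbb R^n)$, so the pairing is well defined and $T\ast\varphi$ is an honest function. The key step is to control difference quotients: for a unit vector $e_j$ and $h\to 0$, the functions $\tfrac1h\bigl[\varphi(x+he_j-y)-\varphi(x-y)\bigr]$ converge to $\partial_{x_j}\varphi(x-y)$ in the topology of $C^\infty_0$, that is, uniform convergence of all $y$-derivatives with supports contained in a fixed compact set. Continuity of $T$ then yields $\partial_{x_j}(T\ast\varphi)=T\ast\partial_j\varphi$, and an induction gives $T\ast\varphi\in C^\infty$; continuity of each derivative follows from the continuity of $x\mapsto\varphi(x-\cdot)$ as a map into $C^\infty_0$.

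For the convergence, I would test against an arbitrary $\psi\in C^\infty_0(\mathbb R^n)$ and transfer the mollification onto the test function. Writing $\check\varphi_\varepsilon(z)=\varphi_\varepsilon(-z)$, the computation $\langle T\ast\varphi_\varepsilon,\psi\rangle = \langle T,\,\check\varphi_\varepsilon\ast\psi\rangle$ reduces everything to the elementary fact that an approximate identity regularizes smooth functions: since $\check\varphi_\varepsilon$ is again positive with unit mass and support shrinking to the origin, one has $\partial^\alpha(\check\varphi_\varepsilon\ast\psi)=\check\varphi_\varepsilon\ast\partial^\alpha\psi\to\partial^\alpha\psi$ uniformly, with all supports contained in a fixed compact set, so $\check\varphi_\varepsilon\ast\psi\to\psi$ in $C^\infty_0(\mathbb R^n)$. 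Continuity of $T$ then delivers $\langle T,\check\varphi_\varepsilon\ast\psi\rangle\to\langle T,\psi\rangle$, which is the claim.

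The main obstacle is the identity $\langle T\ast\varphi_\varepsilon,\psi\rangle=\langle T,\check\varphi_\varepsilon\ast\psi\rangle$, because it requires commuting the distribution $T$ with the integral $\int(\cdot)\,\psi(x)\,dx$, and this is not literally Fubini for measures. I would justify it by viewing $y\mapsto\int\varphi_\varepsilon(x-y)\psi(x)\,dx$ as a vector-valued integral in the Fréchet space $C^\infty_0(\mathbb R^n)$ and approximating it by Riemann sums $\sum_i \varphi_\varepsilon(x_i-\cdot)\,\psi(x_i)\,\Delta x_i$, which converge in the $C^\infty_0$ topology; linearity and continuity of $T$ then let me pass $T$ through the finite sum and across the limit. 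This is the only genuinely nontrivial point, the remainder being bookkeeping with the seminorms defining $C^\infty_0$.
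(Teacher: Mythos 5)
The paper does not prove this statement at all: it is quoted in the introduction as a ``well-known fact'' with a pointer to Rudin's \emph{Functional Analysis}, so there is no in-paper argument to compare against. Your proposal is correct and is essentially the standard proof found in that reference: smoothness via convergence of difference quotients $\tfrac1h[\varphi(x+he_j-\cdot)-\varphi(x-\cdot)]\to\partial_{x_j}\varphi(x-\cdot)$ in $C^\infty_0$ plus continuity of $T$, and convergence via the adjoint identity $\langle T\ast\varphi_\varepsilon,\psi\rangle=\langle T,\check\varphi_\varepsilon\ast\psi\rangle$ together with $\check\varphi_\varepsilon\ast\psi\to\psi$ in $C^\infty_0$. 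You correctly isolate the one genuinely nontrivial step --- passing $T$ through the integral --- and the Riemann-sum argument in the Fr\'echet topology of $C^\infty_0$ is the right way to justify it.
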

To sum up, the function $\varphi_{\varepsilon}$ specifies an operator $\Phi_\varepsilon$ such that 
$$
\Phi_\varepsilon(T) = T\ast \varphi_{\varepsilon}.
$$
If $\varphi_{\varepsilon}$ is a symmetric kernel, that is $\varphi_{\varepsilon}(x) = \varphi_{\varepsilon}(-x)$, and $g  \in C^{\infty}_{0}(\mathbb R^n)$ 
we have
$
\langle T\ast \varphi_{\varepsilon},\, g\rangle = \langle T,\, g \ast \varphi_{\varepsilon}\rangle. 
$
Then we can define a regularization operator with a symmetric kernel on the space of distributions as follows
$
\{\Phi_{\varepsilon}(T)\} (g) = T(\Phi_{\varepsilon}(g)).
$

The de Rham's construction is adapted for the space of continuous functionals on the space of compactly supported smooth differential forms. 
And so we want our operator $R$ to be of the following form
$$
R T[\omega] = T[R^*\omega].
$$
Following the approach presented in the work \cite{GKS1}, we restrict our attention to a special kind of currents, which can be presented as elements
of the Sobolev space of differential forms $\Omega^k_{p_k,\,p_{k+1} }(M)$.  

Suppose that ${\bf B}_1 \subset \mathbb R^n$ is an open ball. For   $\frac{1}{p}-\frac{1}{q}<\frac{1}{n}$ and  $\frac{1}{r}-\frac{1}{p}<\frac{1}{n}$ there
exists an operator $S\colon \Omega^{i}_{ \text{\fontsize{5}{6}\selectfont {\rm dRh}}}({\bf B}_1) \to \Omega^{i-1}_{ \text{\fontsize{5}{6}\selectfont {\rm dRh}}}({\bf B}_1) $
such that (see \cite{GT}) 
$$
\omega = Sd\omega+dS\omega 
$$
and, moreover, in case $\omega \in \Omega^i_{p,\,r}({\bf B}_1) \cap  \Omega^{i}_{ \text{\fontsize{5}{6}\selectfont {\rm dRh}}}({\bf B}_1)$, we  obtain  
$$
\|S\omega\|_{\Omega^{i-1}_{q,\,p}(B)} \le C\| \omega\|_{\Omega^i_{p,\,r}(B)}
$$

Assume that $\tau$  is a compactly supported smooth $n$-form such that $\tau(-x) = -\tau(x)$ and
$$
\int_{\mathbb R^n}\tau = 1,
$$
then we can define operators
$$
{R} \omega= \int\limits_{{\mathbb R}^n}\mathfrak s^{*}_{\varepsilon v} \omega(x) \cdot \tau(v),
$$
where $\varepsilon \in [0,\,1]$ and $\mathfrak s_{v}$ is a special family of diffeomorphisms parametrized  by
$$
\mathbb R^n \to \Diff(B),
$$
$$
v \mapsto \mathfrak s_{v}.
$$
Then we can define 
$$
A  = (R - \mathbf{1})\circ S.
$$
that implies 
$$
R - \mathbf{1} = dA + Ad.
$$
Then we can extend it to the space of currents
$$
R T[\omega] = T[R^*\omega]
$$
and
\begin{align*}
RT[\omega] &= T[R^*\omega] = T[A^*d\omega + dA^*\omega] =  T[A^*d\omega] + T[dA^*\omega] \\
&= AT[d\omega]+ \partial T[A^*\omega] = \{\partial A T + A \partial T \}[\omega]. 
\end{align*}
As a special case of those constructions, it holds for Sobolev spaces of differential forms.

Suppose there exists a bi-Lipschitz homeomorphism $\varphi\colon U \to {\bf B}_1$. Hence, we can define a couple of operators inheriting  the mentioned  properties 
$$
\Big\{\varphi^*R(\varphi^{-1})^*\Big\}\colon \Omega^i_{p,\,r}(U) \to \Omega^i_{p,\,r}(U)
$$
and
$$
\Big\{\varphi^*A(\varphi^{-1})^* \Big\}\colon \Omega^i_{p,\,r}(U) \to \Omega^{i-1}_{q,\,p}(U).
$$

Then, for a Lipschitz atlas, that construction can be globalized by gluing locally acting operators on charts. 

In this respect, the main part of the current work is devoted to examining the basic properties of the de Rham regularization operator $\mathscr R$. 
We apply the procedure of regularization in the context of metric  simplicial complexes with bounded geometry (see, for example, \cite{DK}):

\begin{definition}
We will say that a  metric simplicial complex $K$ is a star-bounded complex if there exists  $N < \infty $ such that 
every vertex of $K$ is incident to at most $N$ edges.
\end{definition}
\begin{definition}
A metric simplicial complex $K$ has bounded geometry if it is connected, star-bounded, and there exists $L\ge 1$ such that the length of every edge is in the interval $[L^{-1},\,L]$.
\end{definition}
\begin{remark}
Following \cite{DK}, the notion of bounded geometry includes star-boundedness. We highlight this property in order to emphasize  the connection  with the work \cite{GKS2}.
\end{remark}
\begin{remark}
Below, we will assume that all complexes have bounded geometry with $L = 1$.
\end{remark}
Those results  have some useful applications regarding an isomorphism $H^i(\Omega_\pi) \cong H^i(C_\pi)$ and the Sobolev inequality. 
In particular, the obtained results allow one to transfer the following assertion \cite{GT}:
\begin{theorem} 
$H^i(\Omega_{\pi}) = 0$ if and only if for all $\omega \in \ker d^i$, there exists  a differential form $\theta$ of degree $i-1$ such that $d\theta = \omega$ and
$$
\|\theta\|_{\Omega^{i-1}_{\pi}}\le C \|\omega\|_{\Omega^i_{\pi}}.
$$
\end{theorem}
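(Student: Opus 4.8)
The plan is to read the assertion as a purely functional-analytic statement about the cochain complex $\Omega_\pi = (\Omega^i_{p_i,p_{i+1}}(K),\,d^i)$ of Banach spaces and to deduce it from the open mapping theorem, exactly as in the manifold case of \cite{GT}. Here I read $H^i(\Omega_\pi)$ as the \emph{unreduced} cohomology $\ker d^i/\im d^{i-1}$; with this reading both implications reduce to statements about the bounded operator $d^{i-1}\colon \Omega^{i-1}_\pi \to \Omega^i_\pi$ and its range inside the closed subspace $\ker d^i$. The only structural facts I need are that each $\Omega^i_{p_i,p_{i+1}}(K)$ is complete and that each $d^i$ is bounded between consecutive spaces; both are guaranteed by the bounded-geometry hypothesis (with $L=1$) and the regularization machinery already set up, since the graph norm $\|\omega\|_{\Omega^i_\pi}=\|\omega\|_{p_i}+\|d\omega\|_{p_{i+1}}$ is exactly the one making $d^i$ continuous across the shifted indices.

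For the implication $(\Rightarrow)$ I would argue as follows. Since $d^i$ is continuous, $\ker d^i$ is a closed subspace of the Banach space $\Omega^i_\pi$, hence itself Banach. The vanishing $H^i(\Omega_\pi)=0$ means $\im d^{i-1}=\ker d^i$, so $d^{i-1}$, viewed as a map $\Omega^{i-1}_\pi \to \ker d^i$, is a bounded surjection between Banach spaces. By the open mapping theorem it is open, so there is $c>0$ with $d^{i-1}\bigl(B_{\Omega^{i-1}_\pi}(0,1)\bigr)\supseteq B_{\ker d^i}(0,c)$. Given $\omega\in\ker d^i$ with $\omega\neq 0$, the element $\frac{c}{2\|\omega\|}\,\omega$ lies in $B_{\ker d^i}(0,c)$, hence equals $d^{i-1}\eta$ for some $\eta$ with $\|\eta\|_{\Omega^{i-1}_\pi}<1$; setting $\theta=\frac{2\|\omega\|}{c}\,\eta$ gives $d\theta=\omega$ and $\|\theta\|_{\Omega^{i-1}_\pi}\le \frac{2}{c}\|\omega\|_{\Omega^i_\pi}$. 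Taking $C=2/c$, uniform in $\omega$, yields the required estimate.

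The implication $(\Leftarrow)$ is immediate: the existence, for every $\omega\in\ker d^i$, of a primitive $\theta$ with $d\theta=\omega$ forces $\ker d^i\subseteq\im d^{i-1}$, and since $d^i d^{i-1}=0$ always gives $\im d^{i-1}\subseteq\ker d^i$, we conclude $\im d^{i-1}=\ker d^i$, that is $H^i(\Omega_\pi)=0$. The norm bound plays no role in this direction, so the entire content of the theorem is carried by the open mapping argument above; one may also note that the uniform estimate is precisely what forces $\im d^{i-1}$ to be closed, which together with surjectivity onto $\ker d^i$ is the unreduced vanishing.

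The main obstacle is therefore not the two implications themselves but the ambient hypothesis that the spaces $\Omega^i_{p_i,p_{i+1}}(K)$ are genuinely Banach and that $d^i$ is bounded across the shifted integrability indices — this is exactly where the constraint $\frac{1}{p_{i+1}}-\frac{1}{p_i}\le\frac1n$ and the bounded geometry of $K$ enter, through the local Sobolev/homotopy operators $S$ and the glued regularization operators $\mathscr R,\mathscr A$. Once completeness and continuity are in hand, the argument is robust and does not see the simplicial structure; I would only need to verify that passing from the local model $\mathbf B_1$ to the complex $K$ preserves completeness, which follows from the gluing construction described above.
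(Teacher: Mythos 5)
Your argument is correct and is exactly the standard one: the paper itself does not prove this statement but imports it from \cite{GT}, and the proof there is the same open-mapping-theorem argument you give, applied to the bounded surjection $d^{i-1}\colon \Omega^{i-1}_{\pi}\to\ker d^i$ between Banach spaces (with $\ker d^i$ closed because the graph norms make $d^{i}$ continuous, and the reverse implication being the trivial inclusion $\ker d^i\subseteq\operatorname{im} d^{i-1}$). Your reading of $H^i$ as the unreduced quotient $\ker d^i/\operatorname{im} d^{i-1}$ matches the paper's definition, and the completeness of $\Omega^i_{\pi}(K)$ holds by construction since these spaces are defined as closures in the graph norm, so there is no gap.
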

\noindent to the case of metric simplicial complexes with bounded geometry.

It is enlightening to note that,
generally, $H^i(\Omega_\pi) \to H^i(C_\pi)$ is a homomorphism and its kernel vanishes only for proper $\pi$. We specify a class of differential forms, which can make 
this kernel nontrivial [{\bf Theorem \ref{NonTriv}}]. 

It is also worth mentioning that the  theory of $L_\pi$-cohomology for Riemannian manifolds of bounded geometry with a triangulation  gets thoroughly explored  in  Ducret's thesis \cite{Duc}.
In this respect, it is relevant to note that a bi-Lipschitz triangulation of Riemannian manifolds with bounded geometry is provided by results  \cite{OA}. 
{\centering
\section{Hitchhiker's Guide to The Results}
}
This section should give an informal overview of the  basic results obtained in the present work. 
The explanation below is devoted to our intention to reveal the  essential aspects lying behind the forthcoming tons of calculations. 
So it can serve as a hitchhiker's guide to the presented article.  
Let us draw  attention to the category that consists of complexes of Sobolev spaces and bounded maps between them. 
For our purposes, we primarily focus on  Sobolev-de Rham complexes and complexes of Sobolev spaces of simplicial cochains. 
Following de Rham, we introduce a map [{\bf Section \ref{DefReg}}]
$$
\xymatrix{\Omega_{\pi}\ar[r]^{\mathscr R}&\Omega_{\pi}}
$$
that enjoys  [{\bf Section \ref{Decomp}}] the property to be factored  through a nicely defined  complex in the following manner:  
$$
\xymatrix@C = 0.2em{
&S{\mathscr L}_{\pi}\ar@{->>}[rd]^{\epsilon}&\\
\Omega_{\pi}\ar[rr]^{\mathscr R}\ar[ru]^{\rho}&&\Omega_{\pi}
}
$$
Another benefit is  the following homotopy [{\bf Lemma \ref{Hm}}]
$$
\xymatrix{\Omega_{\pi} \rtwocell<8>^{<1.5>{\mathscr R}}_{<1.5>{1_{\Omega_{\pi}}}}{\  \mathscr A}& \Omega_{\pi}}.
$$
Combining the preceding couple of diagrams we obtain 
$$
\epsilon \rho \sim 1
$$  
And as a result, $\rho$ is a quasi-isomorphism. 

Turning to the de Rham map between the complex of differential forms and the complex of  simplicial cochains
$$
\xymatrix{S{\mathscr L}_{\pi}\ar[r]^{\mathscr I}&C_{\pi}},
$$
it can be proved that our map is a split epimorphism  [{\bf Section \ref{dRhOp}}]:
$$
S{\mathscr L}_{\pi}\cong C_{\pi}\oplus \ker_{\pi}{\mathscr I}
$$
In contrast with the classical case, there arises  the question of the triviality of the kernel.   
The theorem below shows us that, whenever we impose a proper condition on $\pi$,   there exists  an isomorphism between the following  complexes up to homotopy: 
$$
 \ker_{\pi}{\mathscr I}\cong 0.
$$
\begin{theorem}
Given a non-increasing sequence $\pi = \langle p_0,\,p_1,\dots,\,p_n\rangle$, then $ \ker_{\pi}{\mathscr I} $ is a contractible complex 
$$
\xymatrix{ \ker_{\pi}{\mathscr I} \rtwocell<8>^{<1.5>{1}}_{<1.5>{0}}{\  }& \ker_{\pi}{\mathscr I} }.
$$
\end{theorem}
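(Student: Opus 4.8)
The plan is to exhibit an explicit bounded null-homotopy of the identity on $\ker_{\pi}\mathscr I$, that is, a family of morphisms $\mathscr H_{\ker}\colon \ker^{i}_{\pi}\mathscr I \to \ker^{i-1}_{\pi}\mathscr I$ in ${\sf Ban}$ with $d\mathscr H_{\ker}+\mathscr H_{\ker}d=\mathbf 1$. The starting point is the splitting already established in Section \ref{dRhOp}: the de Rham map $\mathscr I$ admits a bounded section, the Whitney embedding $\mathscr W\colon C_{\pi}\to S\mathscr L_{\pi}$, with $\mathscr I\mathscr W=\mathbf 1_{C_{\pi}}$. Consequently $\mathscr P:=\mathscr W\mathscr I$ is an idempotent chain endomorphism of $S\mathscr L_{\pi}$, and $\mathscr Q:=\mathbf 1-\mathscr P$ is the complementary projection whose image is exactly $\ker_{\pi}\mathscr I$. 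Since $\mathscr Q$ commutes with $d$, the whole problem reduces to producing a bounded homotopy on the ambient complex $S\mathscr L_{\pi}$ and transporting it through $\mathscr Q$.

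First I would construct, on a single simplex, the standard Whitney--Dupont homotopy: the cone (Poincar\'e) operator $\mathscr H$ that lowers the form degree by one and satisfies the simplicial homotopy formula
\[
 d\mathscr H+\mathscr H d=\mathbf 1-\mathscr W\mathscr I=\mathscr Q
\]
on smooth-on-simplices forms. Glued over the complex by means of the bounded-geometry atlas --- so that the local operators assemble into a globally defined one exactly as the operators $\mathscr R$ and $\mathscr A$ were globalized in Section \ref{DefReg} --- this furnishes a degree $-1$ operator $\mathscr H$ on $S\mathscr L_{\pi}$ realizing $\mathscr Q$ as $d\mathscr H+\mathscr H d$.

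The hard part is the boundedness of $\mathscr H$ in the $L_{\pi}$-scale, and this is precisely where the non-increasing hypothesis on $\pi$ enters. The operator sends $\Omega^{i}$ to $\Omega^{i-1}$ by integration along cone directions, so it regularizes, morally gaining one order of integrability. To land in $S\mathscr L^{i-1}_{p_{i-1},p_i}$ one needs the estimate $\|\mathscr H\omega\|_{L^{p_{i-1}}}\le C\|\omega\|_{L^{p_i}}$, while the identity $d\mathscr H\omega=\omega-\mathscr H d\omega-\mathscr W\mathscr I\omega$ controls the differential in $L^{p_i}$ automatically. Over simplices of dimension at most $n$ the Sobolev--Poincar\'e inequality delivers the first estimate exactly under $\tfrac1{p_i}-\tfrac1{p_{i-1}}\le\tfrac1n$, which is the standing constraint read one index down; the monotonicity $p_{i-1}\ge p_i$ guarantees that this gain is demanded in the direction the regularizing operator actually supplies, so that no step of the descent requires a loss of integrability the cone operator cannot produce. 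Bounded geometry with $L=1$ makes $C$ uniform over all simplices, hence $\mathscr H$ is bounded on $S\mathscr L_{\pi}$. I expect verifying this uniform Sobolev--Poincar\'e bound, and checking that it survives the partition-of-unity gluing without degrading the exponents, to be the main obstacle.

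Finally I would transport the homotopy to the kernel by setting $\mathscr H_{\ker}:=\mathscr Q\,\mathscr H\,\iota$, where $\iota\colon\ker_{\pi}\mathscr I\hookrightarrow S\mathscr L_{\pi}$ is the bounded inclusion. Because $\mathscr Q$ and $\iota$ commute with $d$, and $\mathscr Q$ is idempotent with image $\ker_{\pi}\mathscr I$, one computes
\[
 d\mathscr H_{\ker}+\mathscr H_{\ker}d=\mathscr Q\,(d\mathscr H+\mathscr H d)\,\iota=\mathscr Q\,\mathscr Q\,\iota=\mathscr Q\,\iota=\mathbf 1_{\ker_{\pi}\mathscr I},
\]
using $\mathscr I\iota=0$, so that $\mathscr W\mathscr I$ annihilates $\im\iota$. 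As $\mathscr H_{\ker}$ is a composite of bounded maps it is a morphism in ${\sf Ban}$, and the displayed identity is exactly the asserted $2$-cell $1\Rightarrow 0$ witnessing that $\ker_{\pi}\mathscr I$ is contractible.
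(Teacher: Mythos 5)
Your argument is correct in outline but follows a genuinely different route from the paper. The paper proceeds in two steps: first, Proposition \ref{TrivKer} establishes (via a skeleton-by-skeleton descent using local Poincar\'e lemmas, the extension Lemma \ref{BtoI}, the Banach inverse operator theorem, and the embedding $\ell^{p_{k+1}}\hookrightarrow\ell^{p_k}$ for $p_k\ge p_{k+1}$) that every cocycle in $\ker_\pi\mathscr I$ admits a bounded primitive in $\ker_\pi\mathscr I$, i.e.\ a bounded right inverse $\eta^i\colon Z^i_\pi\to\ker^{i-1}_\pi\mathscr I$ of $d$ on cycles; second, a purely formal downward induction ($h^n=\eta^n$, $\alpha^{i-1}=1-h^id^{i-1}$, $h^{i-1}=\eta^{i-1}\alpha^{i-1}$) assembles these splittings into the contraction $1=d^{i-1}h^i+h^{i+1}d^i$. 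You instead propose to build a single global Dupont-type homotopy $\mathscr H$ on the ambient complex with $d\mathscr H+\mathscr Hd=\mathbf 1-\mathscr W\mathscr I=\mathscr Q$ and then restrict it to the kernel via $\mathscr H_{\ker}=\mathscr Q\mathscr H\iota$; your algebra at that last step is sound ($\mathscr Q$ is an idempotent chain map with image $\ker_\pi\mathscr I$ and $\mathscr W\mathscr I$ annihilates $\im\iota$). What your approach buys is that the contraction is produced in one shot and is explicit, bypassing the inductive bootstrapping; what the paper's approach buys is that the entire analytic burden is isolated in the single statement of Proposition \ref{TrivKer}, whose proof is already carried out, whereas you must still establish the $L_\pi$-boundedness of the glued Dupont operator, which is the whole analytic content of the theorem and which you explicitly defer.

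Two caveats on that deferred step. First, for the spaces $S\mathscr L^i_{p_{i-1},p_i}$ the norm on each top simplex is a sup-norm, so the local estimate for the cone operator is sup-to-sup and needs no integrability gain; the place where non-increasing monotonicity genuinely enters is the global summation $\bigl(\sum_\sigma\|\cdot\|_\infty^{p_{i-1}}\bigr)^{1/p_{i-1}}\le\bigl(\sum_\sigma\|\cdot\|_\infty^{p_i}\bigr)^{1/p_i}$, valid only for $p_{i-1}\ge p_i$ --- exactly the $\ell^p$-embedding the paper invokes in Proposition \ref{TrivKer} --- rather than the Sobolev--Poincar\'e inequality you cite. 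Second, you should check that the Dupont homotopy, which is defined simplexwise, produces forms that remain in $S\mathscr L_\pi$ (compatible on shared faces and Lipschitz across them); this is where the paper instead works skeleton by skeleton and uses the extension Lemma \ref{BtoI}. Neither point breaks your strategy, but both must be addressed before the proposal is a complete proof.
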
 
\begin{Sproof}
Indeed, let us take a look at the complex
$$
 \ker_{\pi}{\mathscr I} = ( \ker^i_{\pi}{\mathscr I},\,d^{i}),
$$
we will use the notation $Z^i_\pi$ for $\ker d^i$.
Due to   [{\bf Proposition \ref{TrivKer}}], the monotonicity of $\pi = \langle p_0,\,p_1,\dots,\,p_n\rangle$ and $H^{i}( \ker_{\pi}{\mathscr I}) = 0$ guarantee us that there exists a family of bounded maps
$$
\eta^i \colon Z^i_\pi  \to  \ker^{i-1}_{\pi}{\mathscr I}
$$
satisfying 
$
d^{i-1}\eta^{i} = 1.
$
For 
$
H^{n}( \ker_{\pi}{\mathscr I}) = 0
$
we have
$
d^n = 0.
$
It follows that for $h^n = \eta^n$
$$h^{n}\colon  \ker^n_{\pi}{\mathscr I} \to  \ker^{n-1}_{\pi}{\mathscr I}$$
we have
$
d^{n-1}h^{n} = 1.
$
Assume that $\alpha^{n-1} = 1- h^n d^{n-1}$, then we have
$$
d^{n-1}\alpha^{n-1} = d^{n-1}(1-h^{n}d^{n-1}) = d^{n-1} - (d^{n-1}h^{n})d^{n-1} = 0.
$$
As a result, we obtain that $\alpha^{n-1}\big( \ker^{n-1}_{\pi}{\mathscr I}\big) \subset Z_{\pi}^{n-1}$.  Consider
$$\eta^{n-1}\colon  Z^{n-1}_{\pi} \to  \ker^{n-2}_{\pi}{\mathscr I}$$ 
for which
$d^{n-2}\eta^{n-1} = 1$. Let us put $h^{n-1} = \eta^{n-1}\alpha^{n-1}$
$$
\xymatrix{
&\ker^{n-1}_{\pi}{\mathscr I}\ar[d]^{\alpha^{n-1}}\ar@/{}^{1.5pc}/[rd]^{0} \ar@/{}_{2pc}/[ld]_{h^{n-1}}&\\
\ker^{n-2}_{\pi}{\mathscr I}\ar@/{}_{1.2pc}/[r]_{d^{n-2}}&Z^{n-1}_{\pi}\ar[r]_{d^{n-1}} \ar@/{}_{1.2pc}/[l]_{\eta^{n-1}}&0
}
$$
That implies 
$$
d^{n-2}h^{n-1} = (d^{n-2}\eta^{n-1})\alpha^{n-1}  =  \alpha^{n-1}
$$
and
$$
1 = d^{n-2}h^{n-1}+ h^n d^{n-1}.
$$
Assume that we have $h^n,\,h^{n-1},\dots,\,h^{i}$ for which there holds the equation 
$$
1 = d^{j-1}h^{j}+ h^{j+1} d^{j}
$$
where $j\ge i$.
 Let us put $\alpha^{i-1} = 1- h^i d^{i-1}$
 $$
d^{i-1} \alpha^{i-1} = d^{i-1} - (d^{i-1}h^{i})d^{i-1} = d^{i-1} - (1-h^{i+1}d^{i})d^{i-1} = 0.
 $$
As a result, it holds that $\alpha^{i-1}\big( \ker^{i-1}_{\pi}{\mathscr I}\big) \subset Z_{\pi}^{i-1}$.
Moreover, there exists 
$$\eta^{i-1}\colon  Z^{i-1}_{\pi} \to  \ker^{i-2}_{\pi}{\mathscr I}$$ 
such that
$d^{i-2}\eta^{i-1} = 1$. Let us put $h^{i-1} = \eta^{i-1}\alpha^{i-1}$
$$
\xymatrix{
&\ker^{i-1}_{\pi}{\mathscr I}\ar[d]^{\alpha^{i-1}}\ar@/{}^{1.5pc}/[rd]^{0} \ar@/{}_{2pc}/[ld]_{h^{i-1}}&\\
\ker^{i-2}_{\pi}{\mathscr I}\ar@/{}_{1.2pc}/[r]_{d^{i-2}}&Z^{i-1}_{\pi}\ar[r]_{d^{i-1}} \ar@/{}_{1.2pc}/[l]_{\eta^{i-1}}&0
}
$$
That implies 
$$
d^{i-2}h^{i-1} = (d^{i-2}\eta^{i-1})\alpha^{i-1}  =  \alpha^{i-1}
$$
and
$$
1 = d^{i-2}h^{i-1}+ h^i d^{i-1}.
$$
Repeating that argument, we can finalize our proof inductively. 
So we obtain the homotopy equivalence
$$
1_{ \ker_{\pi}{\mathscr I}} \sim 0.
$$
\end{Sproof}
\begin{corollary}
Given a non-increasing sequence $\pi = \langle p_0,\,p_1,\dots,\,p_n\rangle$, then
$$
 \ker_{\pi}{\mathscr I}\cong 0.
$$
\end{corollary}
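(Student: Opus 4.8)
The plan is to obtain the corollary formally from the preceding theorem. Read ``$\cong$'' as isomorphism up to homotopy---the same sense in which $\epsilon\rho \sim 1$ makes $\rho$ a quasi-isomorphism above---that is, as isomorphism in the homotopy category of complexes over ${\sf Ban}$. In that category an object $A$ is isomorphic to the zero complex precisely when its identity $1_A$ is null-homotopic, and the theorem supplies exactly such a null-homotopy $1_{\ker_\pi \mathscr{I}} \sim 0$.

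First I would make the reduction explicit. Between $A = \ker_\pi \mathscr{I}$ and the zero complex the only chain maps are the zero maps $f = 0\colon A \to 0$ and $g = 0\colon 0 \to A$, each trivially bounded and hence a legitimate morphism of ${\sf Ban}$-complexes. The composite $fg = 0$ equals $1_0$ on the nose, while $gf = 0$ is chain-homotopic to $1_A$ by the theorem. Hence $f$ and $g$ are mutually inverse up to homotopy, which is precisely the assertion $\ker_\pi \mathscr{I} \cong 0$.

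The entire analytic weight sits in the theorem and, through it, in Proposition \ref{TrivKer}: it is there that the hypothesis of non-increasing $\pi$ is consumed, via the bounded splitting maps $\eta^i$ and the vanishing $H^i(\ker_\pi \mathscr{I}) = 0$. The passage from contractibility to isomorphism with $0$ introduces no new hypotheses. The one point I would take care to record is that the contracting homotopy built in the theorem is by bounded operators, so that $1 \sim 0$ holds in ${\sf Ban}$ and not merely algebraically: each $h^i = \eta^i \alpha^i$ is a composite of the bounded maps $\eta^i$ and $\alpha^i = 1 - h^{i+1}d^{i}$, and boundedness is stable under composition, finite sums, and the bounded differentials $d^i$. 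I therefore expect no genuine obstacle; the corollary is just a restatement of the theorem in categorical language.
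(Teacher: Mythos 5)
Your argument is correct and coincides with the paper's own proof: both take the two zero chain maps between $\ker_{\pi}\mathscr I$ and the zero complex and use the null-homotopy $1\sim 0$ from the preceding theorem to conclude they are mutually inverse in the homotopy category. Your added remark that the contracting homotopy consists of bounded operators, so the equivalence lives in ${\sf Ban}$, is a worthwhile precision the paper leaves implicit.
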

\begin{Sproof}
We have a couple of trivial maps $\alpha = 0$ and $\beta = (0\hookrightarrow \ker_{\pi}{\mathscr I})$:
$$
\xymatrix{
\ker_{\pi}{\mathscr I}\ar[r]^{\alpha}\ar@/{}^{2pc}/[rr]^{\beta\alpha = 0}\ar@/{}_{2pc}/[rr]_{1}&0\ar[r]^{\beta} &\ker_{\pi}{\mathscr I}
}
$$
Thanks to the previous theorem, we have
$$
\xymatrix{ \ker_{\pi}{\mathscr I} \rtwocell<8>^{<1.5>{1}}_{<1.5>{0}}{\  h}& \ker_{\pi}{\mathscr I} }.
$$
That implies
$$
1 - \beta\alpha  = dh+hd
$$
and it follows that 
$$
\beta\alpha \sim 1
$$
and 
$\alpha$ is a quasi-isomorphism.
\end{Sproof}
By contrast with the previous, assume that $\pi$ misses monotonicity at the $i$-th term.     
 Whenever our complex is contractible 
 $$
\xymatrix{ \ker_{\pi}{\mathscr I} \rtwocell<8>^{<1.5>{1}}_{<1.5>{0}}{\  }& \ker_{\pi}{\mathscr I} }
$$
for any subobject $A$, it also holds true that
\vskip 10pt
$$
\xymatrix{ \ker_{\pi}{\mathscr I}\ar@/{}_{3pc}/[rr]_{0}\ar@/{}^{3pc}/[rr]^{1}\ar@{<<-}[r]&A \rtwocell<5>^{<1.3>{}}_{<1.3>{0}}{\  }& ~~~\ker_{\pi}{\mathscr I} }.
$$
\vskip 10pt
where the triangles are commutative. Below  [{\bf Theorem \ref{NonTriv}}]  we specify such a subobject $dN$:
$$
\xymatrix{
0\ar[r]&\dots\ar[r]&0\ar[r]&\underset{i}{d(N)}\ar[r]&0\ar[r]&\dots\ar[r]&0
}
$$
for which the last diagram fails. So such a subobject could serve as a kind of obstruction to the contractibility. And as a result, $\ker \big(S{\mathscr L}_{\pi} \xrightarrow{\mathscr I}C_{\pi}\big)$ is not trivial.

Thanks to the previous, it follows that
\begin{remark}
Assume that  a sequence $\pi = \langle p_0,\,p_1,\dots,\,p_n\rangle$ misses monotonicity at the $i$-th term, then 
$$
H^i(\Omega_\pi) \neq 0.
$$
\end{remark}
Indeed, loosely speaking, $H^i$ is an additive functor which  could be  understood as defined on the  homotopy category of chain complexes. 
Since
$$
\Omega_\pi \cong S{\mathscr L}_{\pi}\cong C_{\pi}\oplus \ker_{\pi}{\mathscr I}
$$
up to homotopy, the term
$$
 \ker_{\pi}{\mathscr I}  \not\cong 0
$$
implies 
$$
\Omega_\pi \not\cong 0
$$
in  the  homotopy category of chain complexes. In  [{\bf Theorem \ref{NonTriv}}] we specify non-trivial cohomology classes explicitly. 
\begin{remark}
That construction implies the following effect.  For a Sobolev-de Rham complex with  such  a non-monotonic $\pi$,  the only possibility
of having zero cohomology is the absence of bounded geometry.
\end{remark}

{\centering
\section{Sobolev Spaces}

}
Let $K$ be a metric $n$-dimensional simplicial complex and $K[n]$ be a set of simplices of the maximal dimension $n$.  
\begin{definition}
We will use the following notation for $L_p$-norms of differential forms
$$\|\omega\|_{\Omega^k_p(K)} = \Bigg\{\sum_{T\in K([n])} \int\limits_T |\omega(x)|^p dx\Bigg\}^{\frac{1}{p}}$$
\end{definition}
\begin{remark}
In the case $p = \infty$  we put 
$$\|\omega\|_{\Omega^k_{\infty}(T)} = {\rm ess}\sup_{x\in T}|\omega(x)|$$
\end{remark}

\begin{theorem}\emph{(Rademacher's theorem)}
Let $U \subset {\mathbb R}^n$, and let ${f \colon U \to {\mathbb R}}$ be locally Lipschitz. Then $f$ is differentiable at almost every point in $U$.
\end{theorem}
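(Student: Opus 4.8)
The plan is to follow the classical route to Rademacher's theorem, first reducing to the globally Lipschitz case and then manufacturing full differentiability out of directional derivatives. Since differentiability is a purely local property, I would cover $U$ by countably many balls on each of which $f$ coincides with a globally Lipschitz extension; as a countable union of null sets is null, it suffices to treat $f\colon\mathbb{R}^n\to\mathbb{R}$ with a single Lipschitz constant $\Lambda=\operatorname{Lip}(f)$. Throughout I write $S^{n-1}$ for the unit sphere.

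First I would fix a direction $v\in S^{n-1}$ and show the directional derivative $D_v f(x)=\lim_{t\to 0}t^{-1}\big(f(x+tv)-f(x)\big)$ exists for almost every $x$. Along each line $s\mapsto f(x+sv)$ the function is one-variable Lipschitz, hence absolutely continuous and differentiable almost everywhere by the one-dimensional case (Lebesgue's differentiation theorem for absolutely continuous functions). Fubini's theorem, applied on a hyperplane transverse to $v$, then upgrades this to almost every $x\in\mathbb{R}^n$. Specializing to the coordinate directions produces the partial derivatives $\partial_i f$ almost everywhere, and I set $\nabla f=(\partial_1 f,\dots,\partial_n f)$, which satisfies $|\nabla f|\le\sqrt{n}\,\Lambda$ a.e.

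Next I would identify $D_v f$ with $v\cdot\nabla f$. For a test form $\zeta\in C_0^\infty(\mathbb{R}^n)$, a change of variables in the difference quotient followed by dominated convergence (the quotients are bounded by $\Lambda$) gives the weak identity, and integrating by parts in each coordinate yields
$$
\int_{\mathbb{R}^n} D_v f\,\zeta\,dx = -\int_{\mathbb{R}^n} f\,D_v\zeta\,dx = -\sum_i v_i\int_{\mathbb{R}^n} f\,\partial_i\zeta\,dx = \sum_i v_i\int_{\mathbb{R}^n}\partial_i f\,\zeta\,dx = \int_{\mathbb{R}^n}(v\cdot\nabla f)\,\zeta\,dx.
$$
Since $\zeta$ is arbitrary, $D_v f(x)=v\cdot\nabla f(x)$ for a.e. $x$, with the exceptional null set depending on $v$.

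Finally, and this is the main obstacle, I would promote the existence of directional derivatives in every direction to genuine (Fr\'echet) differentiability. Let $\{v_k\}$ be a countable dense subset of $S^{n-1}$ and let $A$ be the full-measure set of points $x$ at which $D_{v_k}f(x)$ exists and equals $v_k\cdot\nabla f(x)$ for every $k$ simultaneously. Fixing $x\in A$ and setting $Q(v,t)=t^{-1}\big(f(x+tv)-f(x)\big)-v\cdot\nabla f(x)$, the uniform Lipschitz bound gives $|Q(v,t)-Q(v',t)|\le(\Lambda+\sqrt{n}\,\Lambda)\,|v-v'|$, so $Q(\cdot,t)$ is equi-Lipschitz in $v$, uniformly in $t$. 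Given $\varepsilon>0$, compactness of $S^{n-1}$ furnishes finitely many $v_{k_1},\dots,v_{k_M}$ that are $\varepsilon$-dense; for $t$ small enough each $|Q(v_{k_j},t)|<\varepsilon$, and interpolating through the nearest $v_{k_j}$ yields $\sup_{v\in S^{n-1}}|Q(v,t)|\to 0$ as $t\to 0$. This uniform vanishing of the remainder is exactly differentiability at $x$, and $|U\setminus A|=0$ finishes the proof. The delicate point is precisely this interpolation: the uncountably many null sets where $D_v f\neq v\cdot\nabla f$ cannot be discarded at once, and it is the uniform Lipschitz estimate that bridges the countable dense family of directions to all of $S^{n-1}$.
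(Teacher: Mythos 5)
The paper does not prove this statement at all: Rademacher's theorem is quoted there as classical background (the only role it plays is to justify defining $df$ almost everywhere for Lipschitz $f$), so there is no in-paper argument to compare yours against. Your proof is the standard one (reduction to the globally Lipschitz case, a.e.\ existence of $D_vf$ via the one-dimensional Lebesgue theorem and Fubini, identification $D_vf=v\cdot\nabla f$ through the weak formulation, and the passage from a countable dense set of directions to full differentiability via the equi-Lipschitz bound on the difference quotients), and it is correct; the only routine point you leave implicit is the measurability of the set where $D_vf$ exists, needed for the Fubini step, which follows by writing that set in terms of $\limsup$ and $\liminf$ of difference quotients over rational increments.
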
 
Due the Rademacher's theorem, for every Lipschitz function $f$, 
we can define $df= \frac{\partial f}{\partial x_1}dx_1+\dots+\frac{\partial f}{\partial x_n}dx_n$ locally at almost every point. 

\begin{remark}
We deal with complexes which may be thought of as piecewise linear (PL) manifolds. In particular, such examples of complexes emerge as  
bi-Lipschitz triangulations of Riemannian manifolds with bounded geometry, see \cite{OA}.  
So, summing up, we should note that  the PL structure on a manifold defines  a Lipschitz structure  as well. 
Moreover, by contrast to an abstract PL manifold,  a metric simplicial complex allows us to modify it slightly, as it follows below. 
\end{remark}

\begin{definition}
By a Lipschitz structure $L_K$ on a metric simplicial complex $K$, we will mean a maximal by inclusion $\mathcal G$-atlas, which is formed by  bi-Lipschitz homeomorphisms $B \to K$, 
where $B \subset \mathbb R^n$ is a bounded set, 
and $\mathcal G$ is  the pseudogroup  of  bi-Lipschitz maps $\mathbb R^n \to \mathbb R^n$. 
That atlas is not empty since it includes an atlas formed by charts defined as stars of our complex at each point, which are bi-Lipschitz deformations of a unit ball in $\mathbb R^n$. 
\end{definition}

\begin{definition} Let $\mathscr L(K)$ be a ring of Lipschitz functions over a metric simplicial complex $K$. 
\begin{itemize}
\item Consider a free $\mathscr L$-module  $\Omega_{\mathscr L}$ generated by the set $ \{df\mid f \in \mathscr L(K)\}$.
Then there exists an exterior algebra 
$$
\bigwedge\nolimits_{\mathscr L}\Big( \Omega_{\mathscr L}\Big)
$$
on the module $\Omega_{\mathscr L}$.
\item Assume that $L_K = \{\varphi_i\colon U_i \to K\mid U_i \subset \mathbb R^n\}$ is a Lipschitz structure on K. Then for every element 
$\omega \in \bigwedge\nolimits_{\mathscr L}\Big( \Omega_{\mathscr L}\Big)$ such that $\omega = f_0df_1\wedge\dots \wedge df_k$,  we have:
	\begin{itemize}
	\item[1.] Let $\varphi \in L_K$, we will use the following notation  $\omega_{\varphi_i} = (\varphi_i^*f_0)d(\varphi_i^*f_1)\wedge\dots \wedge d(\varphi_i^*f_k)$;
	\item[2.] Consider the $\mathscr C^\infty$-module $\Omega^{n-k}_c(U_i)$ of compactly supported differential forms on $U_i$ and define
	a map $\Omega^{n-k}_c(U_i) \to \mathbb R$ by the rule
	$$
	\alpha \mapsto\langle\omega_{\varphi_i},\,\alpha\rangle = \int\limits_{U_i}\omega_{\varphi_i}\wedge\alpha;
	$$
	\end{itemize}
\item Let us introduce an equivalence relation $\sim_{L_K}$  
on 
$
\bigwedge\nolimits_{\mathscr L}\Big( \Omega_{\mathscr L}\Big)
$
 in the following manner. We will say that $\omega \sim_{L_K}\theta$  if, for  every $\varphi_i \in L_K$,
it holds on each $ \alpha \in \Omega^{n-k}_c(U_i)$, that $\langle\omega_{\varphi_i},\,\alpha\rangle = \langle\theta_{\varphi_i},\,\alpha\rangle$. 
\end{itemize}
Finally, we can define the  $\mathscr L$-module of Lipschitz differential forms as the quotient module
$$
\faktor{\bigwedge\nolimits_{\mathscr L}\Big( \Omega_{\mathscr L}\Big)}{\sim_{L_K}}.
$$
\end{definition}
\begin{remark}
The previous definition implies that 
$$
\faktor{\bigwedge\nolimits_{\mathscr L}\Big( \Omega_{\mathscr L}\Big)}{\sim_{L_K}}.
$$
has a finite dimension. In other words, for $k>n$ each homogeneous component  $f_0df_1\wedge\dots \wedge df_k$  lies in the equivalence class of zero. 
\end{remark}
\begin{remark}
Below, we will identify 
$$
\faktor{\bigwedge\nolimits_{\mathscr L}\Big( \Omega_{\mathscr L}\Big)}{\sim_{L_K}}.
$$
and 
$$
\bigwedge\nolimits_{\mathscr L}\Big( \Omega_{\mathscr L}\Big).
$$
\end{remark}
Then define the de Rham differential $d$ in the sense of distributions. Let $\omega \in \Omega^k_p(K)$ we will call a $(k+1)$-form $\eta$  the differential $d\omega$ if
$$
\int_U \omega \wedge d\phi = (-1)^{k+1} \int_U  \eta \wedge \phi
$$
for every compactly supported  Lipschitz $(n-k)$-form $\phi$. 
\begin{definition}
We will use $\Omega^k_\pi(K)$ for
 the closure of the space of Lipschitz differential forms on  $|K|$ with respect to  the following norm
$$\|\omega\|_{\Omega^k_{\pi}} = \|\omega\|_{\Omega^k_{p_k}}+\|d\omega\|_{\Omega^{k+1}_{p_{k+1}}}.$$
\end{definition}

\begin{definition}
Let $S{\mathscr L}^k_{\pi}(K)$ be  the closure of the space of smooth on any simplex  Lipschitz forms with respect to  the following norm
$$
\|\omega\|_{S{\mathscr L}^k_{\pi}(K)} = \Bigg\{\sum\limits_{T\in K([n])}  \|\omega\|_{\Omega_{\infty}(T)}^{p_k}\Bigg\}^{\frac{1}{p_k}}
+\Bigg\{\sum\limits_{T\in K([n])}  \|\omega\|_{\Omega_{\infty}(T)}^{p_{k+1}}\Bigg\}^{\frac{1}{p_k+1}}
$$
\end{definition}
\begin{definition}
Suppose that 
$
C(K) = \Big(C^i(K),\,\partial^{i}\Big)
$
is a  complex of simplicial cochains.
We will use the following notation for $L_p$-norms of  simplicial cochains.
$$c\in C(K),~\|c\|_{C^k_p(K)} = \Bigg\{\sum_{T\in K([n])} |c(T)|^p dx\Bigg\}^{\frac{1}{p}}$$
\end{definition}
\begin{definition}
For a simplicial cochain $c\in C(K)$,  we will assume, that the Sobolev norm is defined in the following manner 
$$
\|c\|_{C^k_{\pi}(K)} = \|c\|_{C^k_{p_k}(K)} +\|\partial c\|_{C^{k}_{p_{k+1}}(K)}
$$
and  $C_{\pi}(K)$  is a Banach space with respect to  that norm. 
\end{definition}

{\centering
\section{Regularization}\label{Reg}
}

\subsection{Definition of $\mathscr R$ and $\mathscr A$}\label{DefReg}
There is a diffeomorphism $h \in \Diff({\mathbb R}^n,\, {\bf B}_1)$, where    ${\bf B}_1$ is an open ball with centre $0$ and radius $1$. Let $U\subset {\mathbb R}^n$  and ${\bf B}_1 \subset U$.
Let $s_v$ be an action on $\mathbb R^n$ by
$$
s_v(x) = x+v.
$$
We can define a  family of diffeomorphisms $\mathfrak s_{ v}$  
$$
\mathbb R^n \to \Diff(U)
$$
such that 
$\mathfrak s_v\colon U \to U$
acts in the following manner
$$
\mathfrak s_v x =\begin{cases}
hs_vh^{-1}(x),~ \text{if $x \in {\bf B}_1$};\\
x,~ \text{if $x \notin {\bf B}_1$}
\end{cases}
$$
and, moreover, 
as shown in the de Rham's book \cite{dR},  $\mathfrak s^*_{tv}$ produces a group action of the additive group of real numbers on $U$, that is, 
$\mathfrak s^*_{(t_0+t_1)v} = \mathfrak s^*_{t_1v}\circ \mathfrak s^*_{t_0v}$.
And also we can say that  ${{\frak X}_v = d_{h^{-1}(x)}h(v)}$ is a vector field  consisting of tangent vectors to $\mathfrak s_{tv} (x)$.

Let $f\colon {\mathbb R}^n \to {\mathbb R}$ be a compactly supported smooth function such that \linebreak
${ {\rm supp}(f) \subset B_1}$, 
${\int_{{\bf R}^n}}f(v)dv^0 \dots dv^{n-1} = 1$, $f(v)\ge 0$ and ${f(v) = f(-v)}$. Let us put $\tau(v) = f(v)dv^0 \dots dv^{n-1}$.
So we can define operators
$$
{R} \omega= \int\limits_{{\mathbb R}^n}\mathfrak s^{*}_{\varepsilon v} \omega(x) \cdot \tau(v).
$$
As shown in \cite{GT}:
\begin{lemma}\label{TGIneq}
Suppose that ${\bf B}_1 \subset \mathbb R^n$ is an open ball and $\Omega^{*}_{ \text{\fontsize{5}{6}\selectfont {\rm dRh}}}({\bf B}_1) $ is the de Rham complex of $C^\infty$-forms. 
For   $\frac{1}{p}-\frac{1}{q}<\frac{1}{n}$ and  $\frac{1}{r}-\frac{1}{p}<\frac{1}{n}$, there
exists an operator $S\colon \Omega^{i}_{ \text{\fontsize{5}{6}\selectfont {\rm dRh}}}({\bf B}_1) \to \Omega^{i-1}_{ \text{\fontsize{5}{6}\selectfont {\rm dRh}}}({\bf B}_1) $
such that 
$$
\omega = Sd\omega+dS\omega 
$$
and, in case $\omega \in \Omega^i_{p,\,r}({\bf B}_1) \cap  \Omega^{i}_{ \text{\fontsize{5}{6}\selectfont {\rm dRh}}}({\bf B}_1)$, we  obtain 
$$
\|S\omega\|_{\Omega^{i-1}_{q,\,p}({\bf B}_1)} \le C\| \omega\|_{\Omega^i_{p,\,r}({\bf B}_1)}.
$$
\end{lemma}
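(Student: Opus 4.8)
The plan is to realise $S$ as an averaged cone (Poincaré) homotopy operator on the ball, to verify the algebraic identity $\omega = dS\omega + Sd\omega$, and then to reduce the two Sobolev estimates to the $L^p \to L^q$ boundedness of a single potential operator of order one. Fix a smooth density $\psi \ge 0$ supported in ${\bf B}_1$ with $\int_{{\bf B}_1}\psi = 1$, and for a smooth $i$-form $\omega$ set
$$
(S\omega)(x) = \int_{{\bf B}_1}\psi(y)\left(\int_0^1 t^{\,i-1}\,\big(\iota_{x-y}\omega\big)\big(y+t(x-y)\big)\,dt\right)dy,
$$
where $\iota_{x-y}$ denotes contraction with the constant field $x-y$. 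This is the operator obtained by averaging, over apices $y$, the Cartan cone operators based at $y$; it lowers the degree by one and preserves smoothness on ${\bf B}_1$.

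\emph{Homotopy identity.} For each fixed apex $y$ the ray homotopy together with Cartan's formula $\Lie = d\iota + \iota d$ gives the pointwise Poincaré identity on the star-shaped domain ${\bf B}_1$; integrating against the probability density $\psi$ and commuting $d$ past the $y$- and $t$-integrals then yields $dS\omega + Sd\omega = \omega$ for $i \ge 1$. I would record this index bookkeeping but not belabour it, since it is the classical star-shaped computation.

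\emph{The potential estimate.} The analytic core is the mapping property of $S$. Performing the change of variables $z = (1-t)y + tx$ in the double integral exhibits each coordinate component of $S$ as an integral operator $a \mapsto \int_{{\bf B}_1} k(x,z)\,a(z)\,dz$ whose kernel satisfies $|k(x,z)| \le C\,|x-z|^{\,1-n}$ on the bounded set ${\bf B}_1$ (the compact support of $\psi$ is what tames the singularity as $t \to 1$). Hence $S$ is dominated, component by component, by the Riesz potential of order one. By the Hardy--Littlewood--Sobolev inequality on a bounded domain --- where the strict inequalities keep us away from the borderline exponent --- such an operator is bounded from $L^p({\bf B}_1)$ to $L^q({\bf B}_1)$ whenever $\tfrac1p - \tfrac1q < \tfrac1n$.

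\emph{Assembling the graph norm.} I would apply this twice. With the pair $(p,q)$, admissible because $\tfrac1p - \tfrac1q < \tfrac1n$, one gets $\|S\omega\|_{\Omega^{i-1}_q} \le C\,\|\omega\|_{\Omega^i_p}$. For the differential, the homotopy identity gives $dS\omega = \omega - Sd\omega$, and with the pair $(r,p)$, admissible because $\tfrac1r - \tfrac1p < \tfrac1n$, one gets $\|Sd\omega\|_{\Omega^i_p} \le C\,\|d\omega\|_{\Omega^{i+1}_r}$, whence
$$
\|dS\omega\|_{\Omega^i_p} \le \|\omega\|_{\Omega^i_p} + \|Sd\omega\|_{\Omega^i_p} \le \|\omega\|_{\Omega^i_p} + C\,\|d\omega\|_{\Omega^{i+1}_r}.
$$
Summing the two bounds yields $\|S\omega\|_{\Omega^{i-1}_{q,p}} \le C'\,\|\omega\|_{\Omega^i_{p,r}}$, which is the claimed estimate. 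The hard part is precisely the potential estimate: recognising $S$ as an order-one integral operator with a weakly singular kernel and establishing its subcritical $L^p \to L^q$ boundedness. Once that is in hand, both the homotopy identity and the final assembly are essentially formal, the two Sobolev exponents entering exactly through the two strict inequalities in the hypothesis.
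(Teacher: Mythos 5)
The paper offers no proof of this lemma; it is imported verbatim from \cite{GT}, where the operator $S$ is constructed exactly as you describe, namely as the Iwaniec--Lutoborski-style averaged cone homotopy whose kernel is dominated by $C|x-z|^{1-n}$, with the two subcritical potential estimates then assembled into the graph-norm bound. Your reconstruction is correct and follows essentially the same route as the cited source, so there is nothing to add.
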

Let us put
$$
A = (R - {\bf 1})S,
$$
and, as a result, we have 
\begin{align*}
dA+Ad = d(R - {\bf 1})S &+ (R - {\bf 1})Sd = dRS - dS + RSd-Sd \\
= RdS + RSd-(dS+Sd) &= R(dS+Sd) -(dS+Sd)  = R -{\bf 1}. 
\end{align*}

It was shown \cite{GKS1} that 
\begin{lemma}\label{GKSEst}
For every $\varepsilon > 0$  the maps $\mathcal{R}_{\varepsilon}$ and  $\mathcal{A}_{\varepsilon}$ are bounded on $\Omega_p^k({\bf B}_1)$ with respect to the $L_p$-norm
 and moreover the following estimations hold
$
\|\mathcal{R}_{\varepsilon}\|_p\le C(\varepsilon),
$
and
$
\|\mathcal{A}_{\varepsilon}\|_p\le M(\varepsilon);
$
where $C(\varepsilon) \to 1$, $M(\varepsilon) \to 0$ as $\varepsilon \to 0$.
\end{lemma}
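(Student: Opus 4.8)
The final statement to prove is Lemma \ref{GKSEst}: for every $\varepsilon > 0$ the operators $\mathcal{R}_\varepsilon$ and $\mathcal{A}_\varepsilon$ are bounded on $\Omega^k_p(\mathbf{B}_1)$ with respect to the $L_p$-norm, with $\|\mathcal{R}_\varepsilon\|_p \le C(\varepsilon)$ and $\|\mathcal{A}_\varepsilon\|_p \le M(\varepsilon)$, where $C(\varepsilon)\to 1$ and $M(\varepsilon)\to 0$ as $\varepsilon\to 0$.

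Let me think about what's really being claimed here.

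**Understanding the operators.** $R\omega = \int_{\mathbb{R}^n} \mathfrak{s}^*_{\varepsilon v}\omega(x)\cdot\tau(v)$ is an averaging of pullbacks under a family of diffeomorphisms. The diffeomorphism $\mathfrak{s}_{\varepsilon v}$ equals $h s_{\varepsilon v} h^{-1}$ inside the ball (where $s_{\varepsilon v}$ is translation by $\varepsilon v$) and the identity outside. So as $\varepsilon\to 0$, each $\mathfrak{s}_{\varepsilon v}$ converges to the identity. The function $\tau(v) = f(v)\,dv^0\cdots dv^{n-1}$ is a fixed probability density supported in $B_1$.

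**The $C(\varepsilon)\to 1$ claim for $R$.** The natural strategy: $R$ is an average (with respect to the probability measure $\tau$) of the pullback operators $\mathfrak{s}^*_{\varepsilon v}$. For a single diffeomorphism $\phi$, the operator norm of $\phi^*$ on $L_p$ of $k$-forms is controlled by the Jacobian and the derivative of $\phi$ (the pullback involves both a composition, which affects $L_p$ norm via the Jacobian of $\phi^{-1}$, and wedging/contracting with $D\phi$, which contributes factors of $\|D\phi\|$). Since $\mathfrak{s}_{\varepsilon v} \to \mathrm{id}$ in an appropriate $C^1$-sense uniformly in $v\in B_1$ as $\varepsilon\to 0$, we expect $\|\mathfrak{s}^*_{\varepsilon v}\|_p \le 1 + o(1)$ uniformly in $v$. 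Integrating against the probability measure $\tau$ preserves this, giving $\|R\|_p \le C(\varepsilon)$ with $C(\varepsilon)\to 1$.

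**The $M(\varepsilon)\to 0$ claim for $A$.** This is the delicate part.

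Here $A = (R - \mathbf{1})S$, and the appearance of $S$ is worrying because $S$ is NOT bounded on plain $L_p$ — Lemma \ref{TGIneq} only gives boundedness of $S$ from $\Omega^i_{p,r}$ to $\Omega^{i-1}_{q,p}$, i.e. it needs Sobolev control, not just $L_p$. So $A$ being bounded on $L_p$ alone must rely on a cancellation: the factor $(R-\mathbf{1})$ should be small AND regularizing in a way that compensates for the unboundedness of $S$.

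**Plan of attack.** First I would write $A = (R-\mathbf{1})S$ and express $(R-\mathbf{1})$ as an integral:
$$
(R-\mathbf{1})\omega = \int_{\mathbb{R}^n}\bigl(\mathfrak{s}^*_{\varepsilon v} - \mathbf{1}\bigr)\omega \cdot \tau(v).
$$
The key is the identity, coming from the group action $\mathfrak{s}^*_{tv}$ and Cartan's formula, that
$$
\mathfrak{s}^*_{\varepsilon v}\omega - \omega = \int_0^\varepsilon \frac{d}{dt}\mathfrak{s}^*_{tv}\omega\,dt = \int_0^\varepsilon \mathfrak{s}^*_{tv}\mathcal{L}_{\mathfrak{X}_v}\omega\,dt,
$$
where $\mathfrak{X}_v = d_{h^{-1}(x)}h(v)$ is the stated vector field. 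By Cartan's magic formula $\mathcal{L}_{\mathfrak{X}_v} = d\,\iota_{\mathfrak{X}_v} + \iota_{\mathfrak{X}_v}\,d$.

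**Why this saves the argument.** The point is that $(R-\mathbf{1})S$ combines the $\iota_{\mathfrak{X}_v} d S$ and $d\iota_{\mathfrak{X}_v} S$ pieces, and the $dS$ and $Sd$ in combination reconstruct something controllable via the homotopy identity $\omega = Sd\omega + dS\omega$. The contraction operator $\iota_{\mathfrak{X}_v}$ is bounded on $L_p$ (it's zeroth order, multiplication by components of the bounded vector field). So the strategy is to push the derivatives off $S$ and onto the integral over $t$, so that what multiplies the (bad) $S$ is only a contraction $\iota_{\mathfrak{X}_v}$ together with the small factor from integrating $\mathfrak{s}^*_{tv}$ over $t\in[0,\varepsilon]$, while any genuine differentiation $dS$ or $Sd$ is absorbed using that $SdS + \ldots$ stays within the scope of Lemma \ref{TGIneq} or reduces to bounded pieces.

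**The main obstacle.** The hard part, as I see it, is making the operator-norm bookkeeping rigorous so that the $\varepsilon$ in the range of integration $\int_0^\varepsilon$ genuinely produces a factor tending to $0$, without that gain being eaten by a $1/\varepsilon$-type blow-up hidden in $S$ composed with a derivative. Concretely I would need: (i) a uniform-in-$v$ bound $\|\mathfrak{s}^*_{tv}\|_p \le C$ for $t\in[0,\varepsilon]$; (ii) $\|\iota_{\mathfrak{X}_v}\|_p \le C$ from boundedness of the vector field; and (iii) the crucial structural fact that the composite $\iota_{\mathfrak{X}_v} S$ (rather than $S$ alone followed by differentiation) is bounded on $L_p$ by the Sobolev estimate of Lemma \ref{TGIneq}, because the Sobolev gain of $S$ exactly pays for the zeroth-order contraction. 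Assembling (i)–(iii):
$$
\|A\omega\|_p \le \int_{\mathbb{R}^n}\int_0^\varepsilon \bigl\|\mathfrak{s}^*_{tv}\,\mathcal{L}_{\mathfrak{X}_v}S\omega\bigr\|_p\,dt\,\tau(v) \le \varepsilon\, C'\,\|\omega\|_p,
$$
so $M(\varepsilon) = \varepsilon C' \to 0$.

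The single most subtle point I expect to fight with is verifying (iii): that the combination appearing in $(R-\mathbf{1})S$, after applying Cartan's formula, never requires differentiating the output of $S$ in a way that leaves a net loss of regularity — i.e. that every place where a $d$ lands on $S\omega$ is either cancelled by the homotopy identity or kept inside the hypotheses under which Lemma \ref{TGIneq} guarantees boundedness. This is where the constraints $\tfrac1p - \tfrac1q < \tfrac1n$ and $\tfrac1r - \tfrac1p < \tfrac1n$ must be invoked to keep all intermediate Sobolev exponents admissible.
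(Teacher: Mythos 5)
The paper itself offers no proof of this lemma: it is imported verbatim from the reference \cite{GKS1} (``It was shown \cite{GKS1} that\dots''), so the only thing to compare your attempt against is the standard argument for de Rham's regularization operators. Your treatment of $\mathcal{R}_{\varepsilon}$ is that standard argument and is essentially correct: the operator norm of $\mathfrak{s}^*_{\varepsilon v}$ on $L_p$ of $k$-forms is controlled by $\|D\mathfrak{s}_{\varepsilon v}\|^{k}$ and the Jacobian of $\mathfrak{s}_{\varepsilon v}^{-1}$, and averaging against the probability density $\tau$ preserves the bound. The one point you should flag is uniformity: $D\mathfrak{s}_{\varepsilon v}(x)=Dh(h^{-1}(x)+\varepsilon v)\,Dh(x)^{-1}$, and near $\partial{\bf B}_1$ the point $h^{-1}(x)$ escapes to infinity, so the convergence $Dh(u+\varepsilon v)Dh(u)^{-1}\to I$ uniformly in $u$ is a property of de Rham's particular choice of $h$, not automatic.

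For $\mathcal{A}_{\varepsilon}$ there is a genuine gap at your step (iii). After Cartan's formula the integrand you must control is $\mathfrak{s}^*_{tv}\big(d\,\iota_{\mathfrak{X}_v}S\omega+\iota_{\mathfrak{X}_v}\,dS\omega\big)$. The second summand contains, via $dS\omega=\omega-Sd\omega$, the term $\iota_{\mathfrak{X}_v}Sd\omega$, which needs $d\omega\in L_r$ and hence cannot be bounded by $\|\omega\|_p$ alone; and the first summand involves genuine first derivatives of $S\omega$, which Lemma~\ref{TGIneq} does not control (it bounds $\|S\omega\|_q$ and, through the homotopy identity, $\|dS\omega\|_p$, but not $\|\nabla S\omega\|_p$). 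So the proposed cancellation does not close as written. The resolution --- and what \cite{GKS1} actually establishes --- is that $\mathcal{A}_{\varepsilon}$ is de Rham's homotopy operator defined without $S$ at all:
$$
\mathcal{A}_{\varepsilon}\omega=\int\limits_{\mathbb{R}^n}\int\limits_0^{\varepsilon}\mathfrak{s}^*_{tv}\,\iota_{\mathfrak{X}_v}\omega\;dt\cdot\tau(v),
$$
which satisfies $d\mathcal{A}_{\varepsilon}+\mathcal{A}_{\varepsilon}d=\mathcal{R}_{\varepsilon}-\mathbf{1}$ directly from your flow identity, since $\mathfrak{s}^*_{tv}$ commutes with $d$ and the $\iota_{\mathfrak{X}_v}d$ piece is just $\mathcal{A}_{\varepsilon}$ applied to $d\omega$. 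For this operator your ingredients (i)--(ii) already constitute the entire proof: $\iota_{\mathfrak{X}_v}$ is zeroth order with uniformly bounded coefficients, $\|\mathfrak{s}^*_{tv}\|_p$ is uniformly bounded for $t\le\varepsilon$, and the $t$-integral over $[0,\varepsilon]$ produces $M(\varepsilon)=O(\varepsilon)$. The operator $A=(R-\mathbf{1})S$ written down elsewhere in the paper is a different homotopy between $R$ and $\mathbf{1}$, and the bare-$L_p$ estimate asserted in the present lemma is not the statement appropriate to it.
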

\begin{remark}
Combining \emph{[{\bf Lemma \ref{TGIneq}}]} and   \emph{[{\bf Lemma \ref{GKSEst}}]} we obtain estimations on norms of  operators 
$R$ and $A$ acting on areas which are bi-Lipschitz homeomorphic to ${\bf B}_1$.
\end{remark}

Consider bi-Lipschitz homeomorphism 
$
\varphi \colon {\bf B}_1 \to U \subset {\mathbb R}^n
$
then we can define operators $\tilde{\mathcal R_{\varepsilon}}$ and $\tilde{\mathcal A_{\varepsilon}}$:

\begin{equation*}
\begin{split}
\xymatrix@R = 4em{
\Omega^k_{\pi}(U)\ar[r]^-{\tilde{\mathcal R_{\varepsilon}}}\ar[d]_{\varphi^*}&\bigwedge^k \Omega_{\mathscr L}(U)\\
\Omega^k_{\pi}({\bf B}_1)\ar[r]_-{\mathcal R_{\varepsilon}}&\Omega^k_{ \text{\fontsize{5}{6}\selectfont {\rm dRh}}} ({\bf B}_1)\ar[u]_{(\varphi^{-1})^*}
}
\end{split}
~~~~~~~~~~~~~~
\begin{split}
\xymatrix@R = 4em{
\Omega^k_{\pi}(U)\ar[r]^-{\tilde{\mathcal A_{\varepsilon}}}\ar[d]_{\varphi^*}& \Omega^{k-1}_{\pi}(U)\\
\Omega^k_{\pi}({\bf B}_1)\ar[r]_-{\mathcal A_{\varepsilon}}&\Omega^{k-1}_{\pi}({\bf B}_1)\ar[u]_{(\varphi^{-1})^*}
}
\end{split}
\end{equation*}
where $\Omega_{\mathscr L}(U)$ is a free module generated by differentials of Lipschitz functions.

It is not hard to see that the commutative squares are squares  in the category of normed Banach spaces because all arrows are bounded maps, and moreover, we have
$
\|\tilde{\mathcal{R}_{\varepsilon}}\|\le \tilde{C}(\varepsilon),
$
and
$
\|\tilde{\mathcal{A}_{\varepsilon}}\|\le \tilde{M}(\varepsilon).
$

Let $K$ be a star-bounded complex. Assume that $K'$ is the first barycentric subdivision of $K$. Let  $\Sigma'_i$ be the star of vertex $e_i$ in $K'$.  
Let $\varphi_i$ be a bi-Lipschitz homeomorphism $\varphi_i\colon {\rm Int}\,\Sigma_i \to U \subset \mathbb R^n$ 
such that ${\bf B_1}\subset U$ and   $\Sigma^{\prime}_i \subset  {\rm Int}\,\varphi^{-1}({\bf B}_1) $.

Given $\varepsilon > 0$, define operators $\mathcal R_i$ and $\mathcal A_i$
$$
\mathcal R_i \omega = \begin{cases}
 (\varphi_i^{-1})^* {\mathcal R_{\varepsilon}} \varphi_i^* \omega~\text{on}~\Sigma_i\\
\omega,\text{~otherwise}
\end{cases};~\mathcal A_i \omega = \begin{cases}
(\varphi_i^{-1})^* {\mathcal A_{\varepsilon}} \varphi_i^*\omega ~\text{on}~\Sigma_i\\
0, \text{~otherwise}
\end{cases}
$$ 
Consider operators
$
\mathscr{R}\omega  = \lim\limits_{i\to\infty} \mathcal R_1\mathcal R_2\dots \mathcal  R_i \omega
$
and
$
\mathscr{A}\omega  = \sum \limits^{\infty}_{i=1} \mathcal R_1\mathcal R_2\dots \mathcal  R_{i-1} \mathcal A_i\omega.
$
Due to the forthcoming  [{\bf Lemma \ref{RCorr}}] and  [{\bf Lemma \ref{ACorr}}],  we can see that such operators are well-defined. 

\begin{lemma}\label{Hm}
Under the above assumptions,  $d\mathscr{A} + \mathscr{A}d = \mathscr{R} - {\bf 1}$.
That is a homotopy equivalence $\mathscr{R} \sim {\bf 1}$.
\begin{proof}
\begin{align*}
d\mathscr{A} + \mathscr{A}d =  \lim\limits_{i\to\infty} \Bigg[  \sum \limits^{k}_{i=1} \mathcal R_1\mathcal R_2\dots \mathcal  R_{i-1}( d\mathcal A_i + \mathcal A_id)\Bigg]\\
= \lim\limits_{i\to\infty} \Bigg[  \sum \limits^{k}_{i=1} \mathcal R_1\mathcal R_2\dots \mathcal  R_{i-1}( \mathcal R_i - {\bf 1})\Bigg] = \mathscr{R} - {\bf 1}
\end{align*}
\end{proof}
\end{lemma}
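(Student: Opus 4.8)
The plan is to reduce the global identity to the local homotopy relation $dA + Ad = R - \mathbf{1}$ already established on the ball, and then to assemble the pieces by a telescoping argument. First I would record two structural facts about the building blocks $\mathcal R_i$ and $\mathcal A_i$. Since $R$ is built from the pullbacks $\mathfrak s^*_{\varepsilon v}$ integrated against the fixed kernel $\tau(v)$ in the parameter $v$, and pullback by a bi-Lipschitz map commutes with the exterior differential in the distributional sense, the operator $R$ — and hence $\tilde{\mathcal R_\varepsilon} = (\varphi^{-1})^*\mathcal R_\varepsilon\varphi^*$ — commutes with $d$. Because $\mathfrak s_v$ is the identity outside ${\bf B}_1$ and we have arranged $\Sigma'_i \subset \mathrm{Int}\,\varphi_i^{-1}({\bf B}_1)$, the two branches in the definitions of $\mathcal R_i$ and $\mathcal A_i$ match across the boundary: $\mathcal R_i = \mathbf{1}$ and $\mathcal A_i = 0$ on a neighbourhood of $\partial\Sigma_i$. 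Consequently $d\mathcal R_i = \mathcal R_i d$ globally, and transporting $dA + Ad = R - \mathbf{1}$ through the pullback square yields the local homotopy
$$
d\mathcal A_i + \mathcal A_i d = \mathcal R_i - \mathbf{1}
$$
on all of $K$ (off $\Sigma_i$ both sides vanish identically).

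With these in hand the computation is the telescoping already displayed in the statement: applying $d\mathscr A + \mathscr A d$ term by term to the defining sum, I would commute $d$ past the leading factors $\mathcal R_1\cdots\mathcal R_{i-1}$ (legitimate by the first fact), collect $d\mathcal A_i + \mathcal A_i d = \mathcal R_i - \mathbf{1}$ in each summand, and observe that
$$
\sum_{i=1}^{k} \mathcal R_1\cdots\mathcal R_{i-1}(\mathcal R_i - \mathbf{1}) = \mathcal R_1\cdots\mathcal R_k - \mathbf{1}
$$
is a telescoping sum (with the empty product convention $\mathcal R_1\cdots\mathcal R_0 = \mathbf{1}$). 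Passing to the limit $k\to\infty$ gives $\mathscr R - \mathbf{1}$, which is the asserted identity, and hence the equivalence $\mathscr R \sim \mathbf{1}$.

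The only genuine obstacle is analytic rather than algebraic: one must know that the partial products $\mathcal R_1\cdots\mathcal R_k$ converge and that the series defining $\mathscr A$ converges, so that the termwise manipulations and the interchange of $d$ with the limit and with the sum are justified in the Sobolev topology. Here I would invoke star-boundedness: by bounded geometry each simplex meets only boundedly many stars $\Sigma_i$, so over any fixed simplex all but finitely many factors $\mathcal R_i$ act as the identity and the product stabilizes; this is precisely the well-definedness secured by the forthcoming Lemma \ref{RCorr} and Lemma \ref{ACorr}, together with the uniform operator bounds $\|\tilde{\mathcal R_\varepsilon}\| \le \tilde C(\varepsilon)$ and $\|\tilde{\mathcal A_\varepsilon}\| \le \tilde M(\varepsilon)$. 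Granting those, the local finiteness makes the telescoping exact term by term and the interchanges valid, so the proof reduces entirely to the two structural facts above followed by the telescoping collapse.
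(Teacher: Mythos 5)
Your argument is correct and is essentially the paper's own proof: the paper's two-line computation silently commutes $d$ past the leading factors $\mathcal R_1\cdots\mathcal R_{i-1}$, invokes the local identity $d\mathcal A_i + \mathcal A_i d = \mathcal R_i - \mathbf{1}$, and telescopes, exactly as you do. You have merely made explicit the supporting facts (commutation of $d$ with $\mathcal R_i$, vanishing of both sides off $\Sigma_i$, and the local finiteness from star-boundedness that justifies the limit) which the paper leaves to Lemmas \ref{RCorr} and \ref{ACorr} and to the surrounding discussion.
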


\subsection{Basic properties of $\mathscr R$}
\begin{theorem}\label{CTriang}
Let $K$ be a simplicial complex of bounded geometry with $L = 1$, $
\pi = \langle p_0,\,p_1,\dots,\,p_n\rangle\subset (1,\,\infty) ,~\frac{1}{p_{i+1}}-\frac{1}{p_i}\le\frac{1}{n}$. Then there exists the diagram
\begin{center}
$$
\xymatrix@R = 2em @C = 1.2em{ \dots\ar[r]^-{d}&\Omega^{k-1}_{\pi}({ K}) \ar[rr]^-{d}\ar[d]^-{\mathscr{R}}
&&\Omega^{k}_{\pi}({ K})\ar[rr]^-{d}\ar[lldd]_{\mathscr{A}}\ar[d]^-{\mathscr{R}}
&&\Omega^{k+1}_{\pi}({ K})\ar[lldd]_{\mathscr{A}}\ar[r]^-{d}\ar[d]^-{\mathscr{R}}&\dots\\
\dots &S\mathscr{L}^{k-1}_{\pi} (K)\ar@{_(->}[d]&&S\mathscr{L}^{k}_{\pi}(K)\ar@{_(->}[d]&&S\mathscr{L}^{k+1}_{\pi}(K)\ar@{_(->}[d]&\dots\\
\dots\ar[r]_-{d}&\Omega^{k-1}_{\pi}(K)\ar[rr]_-{d}&&\Omega^k_{\pi}(K)\ar[rr]_-{d}&& \Omega^{k+1}_{\pi}(K)\ar[r]_-{d}&\dots
}
$$
\end{center}
with commutative squares in the category of Banach spaces ${\sf Ban}$. Moreover, the following holds
$
\mathscr R  - {\bf 1}_{\Omega^*_{\pi}} = d\mathscr A + \mathscr A d.
$
\end{theorem}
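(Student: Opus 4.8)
The plan is to assemble the theorem from the local analysis on the stars $\Sigma_i$ together with the convergence statements of the forthcoming Lemmas \ref{RCorr} and \ref{ACorr}, and to read off the homotopy identity from Lemma \ref{Hm}. I would begin by recording the three properties of the local building blocks $\mathcal R_\varepsilon$ and $\mathcal A_\varepsilon$ on ${\bf B}_1$ that drive everything: (i) $\mathcal R_\varepsilon$ is a cochain map, $d\mathcal R_\varepsilon = \mathcal R_\varepsilon d$, because $R\omega = \int_{\mathbb R^n}\mathfrak s^*_{\varepsilon v}\omega \cdot \tau(v)$ is built from pullbacks and an integration in the parameter $v$, both of which commute with the exterior differential in $x$; (ii) the local homotopy $d\mathcal A_\varepsilon + \mathcal A_\varepsilon d = \mathcal R_\varepsilon - {\bf 1}$, already derived from $A = (R-{\bf 1})S$; and (iii) the boundedness estimates $\|\tilde{\mathcal R_\varepsilon}\|\le \tilde C(\varepsilon)$, $\|\tilde{\mathcal A_\varepsilon}\|\le \tilde M(\varepsilon)$ transported to the charts via Lemma \ref{TGIneq} and Lemma \ref{GKSEst}; here the hypothesis $\frac{1}{p_{i+1}}-\frac{1}{p_i}\le\frac{1}{n}$ is exactly what licenses the Sobolev estimate of Lemma \ref{TGIneq}. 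Each globalized $\mathcal R_i$ inherits (i)--(iii) because outside $\Sigma_i$ it is the identity (respectively zero), so the relation $d\mathcal A_i + \mathcal A_i d = \mathcal R_i - {\bf 1}$ holds on all of $K$.

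Next I would establish well-definedness. The crucial input is bounded geometry with $L=1$: star-boundedness forces each simplex $T\in K[n]$ to meet only a uniformly bounded number of the stars $\Sigma_i$, so on $T$ all but finitely many factors $\mathcal R_j$ act as the identity. Consequently the product $\mathcal R_1\mathcal R_2\cdots\mathcal R_i$ stabilizes on every fixed region as $i\to\infty$, and likewise the series defining $\mathscr A$ is locally finite; the uniform bi-Lipschitz constants together with the uniform bounds $\tilde C(\varepsilon),\tilde M(\varepsilon)$ then upgrade this local finiteness to a uniform operator bound in the global $\Omega_\pi$-norm. This is precisely the content I would quote from the forthcoming Lemmas \ref{RCorr} and \ref{ACorr}, which I take as given here; granting them, $\mathscr R$ and $\mathscr A$ are bounded maps, so every arrow in the diagram is a morphism of ${\sf Ban}$.

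I would then verify the two remaining structural claims. For commutativity of the squares it suffices that $\mathscr R$ is a cochain map: each finite product $\mathcal R_1\cdots\mathcal R_i$ commutes with $d$ by (i), and passing to the locally finite limit preserves the relation, so $d\mathscr R = \mathscr R d$; the downward inclusions $S\mathscr L^k_\pi(K)\hookrightarrow \Omega^k_\pi(K)$ are tautologically cochain maps, which makes both rows of squares commute. For the factorization through $S\mathscr L_\pi$ I would argue that $R$ is smoothing: the integral against the smooth kernel $\tau$ and the smooth family $\mathfrak s_{\varepsilon v}$ sends any $\Omega_\pi$-form on ${\bf B}_1$ to a $C^\infty$-form, and pulling back by the charts $\varphi_i^{-1}$, which are smooth on each simplex of the star, yields a Lipschitz form that is smooth on every simplex. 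Hence $\mathcal R_i\omega$, and therefore the locally finite composite $\mathscr R\omega$, lies in $S\mathscr L_\pi(K)$; the $S\mathscr L_\pi$-norm bound follows from the uniform estimates together with the comparison of $L_\infty$- and $L_{p_k}$-norms available because every simplex has unit volume. This is the map $\rho$ of the factorization, with $\epsilon$ the inclusion and $\epsilon\rho = \mathscr R$.

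Finally, the homotopy identity $\mathscr R - {\bf 1}_{\Omega^*_\pi} = d\mathscr A + \mathscr A d$ is Lemma \ref{Hm}: substituting the local homotopies $d\mathcal A_j + \mathcal A_j d = \mathcal R_j - {\bf 1}$ into the series for $d\mathscr A + \mathscr A d$ produces the telescoping sum $\sum_{j=1}^{i}\big(\mathcal R_1\cdots\mathcal R_j - \mathcal R_1\cdots\mathcal R_{j-1}\big) = \mathcal R_1\cdots\mathcal R_i - {\bf 1}$, whose locally finite limit is $\mathscr R - {\bf 1}$. The main obstacle in all of this is the single analytic point isolated in the second paragraph, namely that the infinite product and the series converge to bounded operators in the global norm; this is exactly why bounded geometry, through the star bound $N$ and the normalization $L=1$, cannot be dispensed with, and I would discharge it by appeal to Lemmas \ref{RCorr} and \ref{ACorr}.
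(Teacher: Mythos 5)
Your overall architecture is the same as the paper's: Theorem \ref{CTriang} is treated there as a direct corollary of the surrounding lemmas (Lemma \ref{RCorr} for boundedness and smoothness of $\mathscr R$, Lemma \ref{ACorr} for $\mathscr A$, the decomposition lemma of Section \ref{Decomp} for the factorization through $S\mathscr{L}_{\pi}$, and Lemma \ref{Hm} for the homotopy identity via exactly the telescoping computation you describe), and your first, second and fourth paragraphs reproduce that assembly faithfully.

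There is, however, one genuine gap in the third paragraph, where you justify the bound $\|\mathscr R\omega\|_{S\mathscr{L}^k_{\pi}(K)}\le C\|\omega\|_{\Omega^k_{\pi}(K)}$ by ``the comparison of $L_\infty$- and $L_{p_k}$-norms available because every simplex has unit volume.'' Finite (or unit) volume of a simplex $T$ gives only the inequality $\|\cdot\|_{\Omega_{p_k}(T)}\le C\|\cdot\|_{\Omega_{\infty}(T)}$, which is the direction needed for the \emph{downward} inclusions $S\mathscr{L}^k_{\pi}(K)\hookrightarrow\Omega^k_{\pi}(K)$, not for the map into the middle row. Since the $S\mathscr{L}_{\pi}$-norm is an $\ell^{p_k}$-sum of sup-norms, what you actually need is the reverse estimate $\|\mathscr R\omega\|_{\Omega_{\infty}(T)}\le C\|\omega\|_{\Omega_{p_k}(\Sigma)}$ on a neighbourhood $\Sigma$ of $T$, and this does not follow from volume bounds for any measurable form; it is a smoothing (Young-type) estimate for the kernel of $\mathcal R_\varepsilon$. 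The paper supplies it as Lemma \ref{CompSubset} (quoted from \cite{GP}): for every compact $F\subset{\bf B}_1$ the operator ${\rm res}_{F,U}\circ\mathcal R_\varepsilon$ is bounded from $\Omega^k_p(U)$ to $\Omega^k_\infty(F)$, and the decomposition lemma then sums these local estimates over the stars covering each simplex, using star-boundedness to control the multiplicity. You correctly observe qualitatively that $\mathcal R_\varepsilon$ is smoothing, but the quantitative $L_{p_k}\to L_\infty$ bound must be invoked explicitly; with that substitution your argument closes and coincides with the paper's.
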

\begin{remark}
This theorem is a direct corollary of all forthcoming propositions which specify the properties of each arrow in the diagram. Every claim assumes bounded geometry as the suppose.  
\end{remark}
\begin{definition}
Assume that $K$ and $K'$ are a couple of metric simplicial complexes, and $\mu \colon K' \to K$ is an isometric embedding. We will denote by 
 ${\rm res}_{K',\,K}$
 the induced map
 $$
 \bigwedge\Omega_{\mathscr L}(K) \to \bigwedge\Omega_{\mathscr L}(K')
 $$
 and its continuation to a corresponding Banach spaces obtained as the closure of the space of Lipschitz forms with respect to the chosen norm.
\end{definition}
Suppose that $S\mathscr{L}^{*} (K)$ is the de Rham complex consisting of  smooth on any simplex Lipschitz forms on $K$
\begin{lemma}\label{RCorr}
The arrow $\Omega^{*}_{\pi}({ K}) \xrightarrow{\mathscr{R}} \Omega^{*}_{\pi}({ K})$  is a morphism in the category ${\sf Ban}$,
namely, $\mathscr R$ is  a bounded operator. Moreover $\mathscr{R}\big(\Omega^{*}_{\pi}({ K})\big) \subset S\mathscr{L}^{k} (K)$.
\end{lemma}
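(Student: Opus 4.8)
The plan is to deduce both assertions from the single structural fact that, by bounded geometry, the infinite product defining $\mathscr{R}$ is locally a finite composition. Since $K$ is star-bounded, every point of $K$ lies in only finitely many of the stars $\Sigma_i$, so for each fixed $x$ the factor $\mathcal{R}_i$ acts as the identity near $x$ once $i$ is large, and the product $\mathcal{R}_1\mathcal{R}_2\cdots\mathcal{R}_i$ stabilizes in a neighborhood of $x$ as $i\to\infty$. This makes the limit $\mathscr{R}\omega$ well-defined pointwise, and the same locality is what drives the two claims.

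First I would treat the inclusion $\mathscr{R}\big(\Omega^*_\pi(K)\big)\subset S\mathscr{L}^k(K)$. Each factor equals $(\varphi_i^{-1})^*\mathcal{R}_\varepsilon\varphi_i^*$ on $\Sigma_i$ and the identity off $\Sigma_i$; because the de Rham regularization $\mathcal{R}_\varepsilon$ is an averaging over the translates $\mathfrak{s}_{\varepsilon v}$, it produces a $C^\infty$-form on ${\bf B}_1$. Pulling this back by the chart $\varphi_i$, which may be taken affine, hence smooth, on the interior of each simplex, yields a form that is Lipschitz on $K$ and smooth on every simplex. A finite composition of such operators again has this property, and since $\mathscr{R}\omega$ agrees in a neighborhood of each top-dimensional simplex with such a finite composition, it lies in $S\mathscr{L}^k(K)$.

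For boundedness I would first reduce to an $L_p$-estimate degree by degree. The homotopy of Lemma \ref{Hm} gives $d\mathscr{A}+\mathscr{A}d=\mathscr{R}-{\bf 1}$, and applying $d$ on both sides (using $d^2=0$) forces $d\mathscr{R}=\mathscr{R}d$; hence
$$
\|\mathscr{R}\omega\|_{\Omega^k_\pi}=\|\mathscr{R}\omega\|_{\Omega^k_{p_k}}+\|\mathscr{R}d\omega\|_{\Omega^{k+1}_{p_{k+1}}},
$$
so it suffices to bound $\mathscr{R}$ in the $L_{p_k}$-norm on $k$-forms and in the $L_{p_{k+1}}$-norm on $(k+1)$-forms. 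Fix a top-dimensional simplex $T\in K[n]$. By locality, $(\mathscr{R}\omega)|_T$ depends only on $\omega$ over the union $\Sigma(T)$ of the stars in a bounded combinatorial neighborhood of $T$, and there it equals a composition of at most $N_0$ nontrivial factors, where $N_0$ is controlled by the star-boundedness constant $N$. Each factor has operator norm at most $\tilde{C}(\varepsilon)$ in the relevant $L_p$-norm by Lemma \ref{GKSEst}, transported through the bi-Lipschitz chart, so that
$$
\|\mathscr{R}\omega\|_{L_p(T)}\le \tilde{C}(\varepsilon)^{N_0}\,\|\omega\|_{L_p(\Sigma(T))}.
$$

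Finally I would sum over all $T\in K[n]$. Raising to the $p$-th power and using that, again by bounded geometry, each simplex appears in at most a bounded number $N_1$ of the neighborhoods $\Sigma(T)$, I obtain
$$
\|\mathscr{R}\omega\|_{\Omega^k_p(K)}\le \tilde{C}(\varepsilon)^{N_0}\,N_1^{1/p}\,\|\omega\|_{\Omega^k_p(K)},
$$
and specializing to degree $k$ with $p=p_k$ and to degree $k+1$ with $p=p_{k+1}$ (applied to $d\omega$) bounds the full $\Omega^k_\pi$-norm, giving the morphism in ${\sf Ban}$. The main obstacle is exactly the passage to the limit: a naive estimate of the infinite product $\mathcal{R}_1\mathcal{R}_2\cdots$ would produce the divergent factor $\tilde{C}(\varepsilon)^\infty$, and the entire argument rests on replacing it, via star-boundedness, by the uniform power $\tilde{C}(\varepsilon)^{N_0}$. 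The delicate point to verify is that the finite ``reach'' of $\mathcal{R}_\varepsilon$ (the region where it differs from the identity) together with the local finiteness of the cover genuinely confines the dependence of $(\mathscr{R}\omega)|_T$ to a combinatorially bounded neighborhood, so that $N_0$ and $N_1$ are independent of $T$.
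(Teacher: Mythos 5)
Your proposal is correct and follows essentially the same route as the paper: use star-boundedness to reduce the infinite product to a finite composition near each star, apply the local $L_p$-estimates of Lemma~\ref{GKSEst} transported through the bi-Lipschitz charts, commute $\mathscr R$ with $d$, and sum over the complex with a bounded-overlap count; the inclusion into $S\mathscr L^k(K)$ likewise comes from each factor landing in smooth forms before pulling back by the piecewise-smooth chart. The one step you flag as delicate --- confining the dependence of $(\mathscr R\omega)\vert_T$ to a combinatorially bounded neighborhood --- is precisely where the paper does its work: it splits $\omega=\alpha\omega+(1-\alpha)\omega$ with a cutoff $\alpha$ supported in $\varphi^{-1}(\mathbf B_1)$, chooses the parameters $\varepsilon_j$ of the individual factors small enough that the support of the far part never creeps into $\Sigma_i'$, and observes that the near part is affected by only the $n$ operators indexed by the vertices of the star, so your constants $N_0$ and $N_1$ are indeed uniform.
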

\begin{proof}
Consider a star $\Sigma_i$ of $K$.   Assume that 
there is a ball $X_i$ such that  \linebreak
${ \Sigma^{\prime}_i \subset {\rm Int }\,X_i \subset {\rm Int}\, \varphi^{-1}({\bf B}_1) } $.
\begin{center}
\begin{tikzpicture}
\begin{scope}[scale = 0.8]
\fill[pattern=north west lines, pattern color =  gray] (0,5/3)--(1.5,2.5/3)--(1.5,-2.5/3)--(0,-5/3)--(-1.5,-2.5/3)--(-1.5,2.5/3)--(0,5/3);
\fill[pattern=north east lines, pattern color =  gray, even odd rule] (-3*1.2,0)--(-1.5*1.2, 2.5*1.2)--(1.5*1.2,2.5*1.2)--(3*1.2,0*1.2)--(1.5*1.2,-2.5*1.2)--(-1.5*1.2,-2.5*1.2)--(-3*1.2,0*1.2) (0,0) circle(5/2.6);
\draw[densely dashed,  color = black!60](0,-2.5)--(0,2.5);
\draw[densely dashed,  color = black!60](-3,0)--(1.5,2.5);
\draw[densely dashed,  color = black!60](3,0)--(-1.5,2.5);
\draw[densely dashed,  color = black!60](-1.5,2.5)--(-1.5,-2.5);
\draw[densely dashed,  color = black!60](1.5,2.5)--(1.5,-2.5);
\draw[densely dashed,  color = black!60](2.25,-1.25)--(-2.25,1.25);
\draw[densely dashed,  color = black!60](2.25,1.25)--(-2.25,-1.25);
\draw[densely dashed,  color = black!60](-3,0)--(1.5,-2.5);
\draw[densely dashed,  color = black!60](3,0)--(-1.5,-2.5);
\draw[very thick](-1.5,2.5)--(1.5,2.5)--(0,0)--(-1.5,2.5);
\draw[very thick](-1.5,-2.5)--(1.5,-2.5)--(0,0)--(-1.5,-2.5);
\draw[very thick] (-3,0)--(3,0);
\draw[very thick] (-1.5,2.5)--(-3,0)--(-1.5,-2.5);
\draw[very thick] (1.5,2.5)--(3,0)--(1.5,-2.5);
\draw (0,0) circle(5/2.6);
\draw (0,0) circle(2.3);
\draw (0, 1.3) node[font = \fontsize{8}{30}]  {$\Sigma_i'$};
\draw (1.8, 2.6 ) node[font = \fontsize{8}{30}]  {$\Sigma_i$};
\draw (0.6, -4.8/3 ) node[font = \fontsize{8}{30}]  {$X_i$};
\end{scope}
\end{tikzpicture}
\end{center}

Let $R_0,~R>0$ be such numbers that $R_0$ is a radius of $X_i$ and $R$ satisfies the following condition $ X_i \subset \{x\mid |x|\le R \} \subset {\rm Int}\, \varphi^{-1}({\bf B}_1) $, 
and $\alpha \colon K \to [0,\,1]$ 

\begin{center}
\begin{tikzpicture}[scale = 0.8]
\draw[very thick,|-|] (-1,0)--(1,0);
\draw[thick,|-|] (-1.7,0)--(1.7,0);
\draw[thick,|-|] (-3.3,0)--(3.3,0);
\draw[thick,|-|] (-2.9,0)--(2.9,0);
\draw[->](0,0) -- (0,3);
\draw[thick] (-2.7,0)--(-2.5,0)--(-1.7,1)--(1.7,1)--(2.5,0)--(2.7,0);
\draw (0.9,1.2) node[font = \fontsize{7}{30}]{$\alpha\colon K \to [0,\,1]$};
\draw (0,-0.2) node[font = \fontsize{7}{30}]{$\Sigma^{\prime}_i$ };
\draw (1.7,-0.3) node[font = \fontsize{7}{30}]{$X_i$ };
\draw (-2.9,-0.3) node[font = \fontsize{7}{30}]{$ \varphi^{-1}({\bf B}_1)$ };
\draw (3.3,-0.3) node[font = \fontsize{7}{30}]{$\Sigma_i$ };
\draw (4.5,-0.3) node[font = \fontsize{7}{30}]{$K$ };
\draw[->](-1,0) -- (-4.5,0);
\draw[->](1,0) -- (4.5,0);

\end{tikzpicture}
\end{center}
be a function defined by
$$
\alpha(x) = \begin{cases}
1,~ x\in X_i;\\
\frac{R-|x|}{R-R_0},~x \in  \varphi^{-1}({\bf B}_1)~\text{and}~ R_0 \le |x|;\\
0,~\text{otherwise}.
\end{cases}
$$
We can represent  $\omega$ as a sum $\omega = \omega_1 + \omega_2$, 
where $\omega_1 = \alpha\omega$ and $\omega_2 = (1-\alpha)\omega$, i. e. ${{\rm supp} (\omega_2) \subset K \setminus X_i}$.

For any $\mathcal R_j$ and $\eta \in \Omega^k(K)$ such that  
${{\rm supp} (\eta) \subset K \setminus X_i}$, choosing $\varepsilon>0$ sufficiently small, 
we can achieve  ${{\rm supp} (\mathcal R_j \eta) \subset K \setminus \Sigma^{\prime}_i}$  
that implies
$R_j \eta  = 0$ on $\Sigma^{\prime}_i$. Due to this fact,  for each $j$, 
we can choose $\varepsilon_j$ in the definition of  the operator $\mathcal R_j$ in such a way that 
${{\rm supp} ( \mathcal R_{1}\dots  \mathcal R_{j} \omega_2) \subset K \setminus \Sigma^{\prime}_i}$  and correspondingly
${ \mathscr R \omega  = \mathscr  R\omega_1}$ on $\Sigma^{\prime}_i$. 

Let  $\Sigma_i$ be spanned by points $\{e_{j_1}, \dots,\, e_{j_n}\}$. 
For every form $\theta$ such that \linebreak
${\rm{supp}(\theta) \subset \varphi^{-1}({\bf B}_1) \subset \Sigma_i}$, 
we can see that only for $k \in \{{j_1}, \dots,\, {j_n}\}$ the operator $\mathcal R_k$ is distinct from the identity morphism.
Choosing sufficiently small $\varepsilon$ each time we face such an operator $R_k \in \{R_{j_1}\,\dots,\,R_{j_n}\}$ in the composition ${\mathcal R_{1} \mathcal  R_{2} \dots \mathcal R_j }$,
we can obtain a map  preserving the support of a form derived at  this step  inside ${\rm Int}\, \Sigma_i$.
Then, for each $j$, we have  \linebreak
${\mathcal R_{1}\dots\mathcal  R_{j} \omega_1 = \mathcal R_{j_1}\dots\mathcal  R_{j_n} \omega_1}$. 
 And so $\mathscr R \omega_1 = \mathcal R_{j_1}\dots\mathcal  R_{j_n} \omega_1$.

We know that, for  any $p$, it holds that 
$
\mathcal  R_{j_{k}}  \colon \Omega^{*}_p(\Sigma_i) \to  \Omega^{*}_p(\Sigma_i) 
$
and, moreover, following \cite{GKS1}, we can parametrize the operator $\mathcal  R_{j_{k},\,(\varepsilon)}$ by the $\varepsilon \to 0$ such that
$
\|\mathcal  R_{j_{k},\,(\varepsilon)}\|_{p} \le 1 + \varepsilon$.
Then  there exists $\varepsilon > 0$ such that 
$
 \| \mathscr R \|_p = \|\mathcal R_{j_1}\dots\mathcal  R_{j_n}\|_p \le 1 + O(\varepsilon),~ \varepsilon \to 0.
$
We should make a note that $d \mathscr R = \mathscr R d$ and the above argument holds for $d \omega$.
As a result, for each $i$, we have
$$
 \| {\rm res}_{\Sigma_i^{\prime},\,K} \circ \mathscr R \omega \|_{\Omega^k_{p_k}(\Sigma_i^{\prime})}  
 \le (1 + O(\varepsilon_i))\|{\rm res}_{\Sigma_i,\,K} \omega \|_{\Omega^k_{p_k}(\Sigma_i)}
 $$
 and
 \begin{align*}
 \| {\rm res}_{\Sigma_i^{\prime},\,K} \circ (d\mathscr R \omega) \|_{\Omega^{k+1}_{p_{k+1}}(\Sigma_i^{\prime})} 
 = \| {\rm res}_{\Sigma_i^{\prime},\,K} \circ \mathscr R d\omega \|_{\Omega^{k+1}_{p_{k+1}}(\Sigma_i^{\prime})}\\
 \le (1 + O(\varepsilon_i))\|{\rm res}_{\Sigma_i,\,K} d\omega \|_{\Omega^{k+1}_{p_{k+1}}(\Sigma_i)}
 \end{align*}
where $\varepsilon_i \to 0.$
The parameter $\varepsilon_i$ depends only on a star, and then, due to the star-boundedness of the complex, we can choose $\varepsilon = \min_i {\varepsilon_i}$. Assume that
$\omega \in \Omega^k_{p_k,\,p_{k+1}}(K)$
\begin{align*}
\| \mathscr R \omega\|_{\Omega^k_{\pi}(K)} &= 
\sum_{i}  \| {\rm res}_{\Sigma^{\prime}_i,\,K} \circ \mathscr R \omega \|_{\Omega^k_{p_k,\,p_{k+1}}(\Sigma^{\prime}_i)}\\
=\sum_{i} \Big( \| {\rm res}_{\Sigma^{\prime}_i,\,K} \circ \mathscr R \omega \|_{\Omega^k_{p_k}(\Sigma^{\prime}_i)} 
&+  \| {\rm res}_{\Sigma^{\prime}_i,\,K} \circ (d\mathscr  R \omega) \|_{\Omega^{k+1}_{p_{k+1}}(\Sigma^{\prime}_i)}\Big)\\
 \le \sum_{i}\Big((1 + O(\varepsilon_1))\|{\rm res}_{\Sigma_i,\,K} \omega \|_{\Omega^k_{p_k}(\Sigma_i)}
&+(1 + O(\varepsilon_2))\|{\rm res}_{\Sigma_i,\,K} d\omega \|_{\Omega^{k+1}_{p_{k+1}}(\Sigma_i)} \Big)\\
&\le \frac{(1 + O(\varepsilon))}{n}\|\omega \|_{\Omega^k_{\pi}(K)}
\end{align*}
In the light of what we have just said, $\mathscr R$ is a bounded map 
$
\mathscr R \colon \Omega^{*}_{\pi}(K) \to \Omega^{*}_{\pi}(K).
$
Moreover, it is not hard to see that 
$\mathscr R \colon \Omega^{*}_{\pi}(K) \to S\mathscr{L}^{*}(K)$. Indeed, 
we know that every $\mathcal R_i =  (\varphi_i^{-1})^* {\mathcal R_{\varepsilon}} \varphi_i^* $, then 
$$
 {\mathcal R_{\varepsilon}} \varphi_i^* \colon  \Omega^{*}_{\pi}(\Sigma_i) \to \Omega^{*}_{ \text{\fontsize{5}{6}\selectfont {\rm dRh}}} (U), 
$$
and $(\varphi_i^{-1})^*$ is a Lipschitz piecewise smooth map. 
\end{proof}
The proof of the following fact can be find in \cite{GP}.
\begin{lemma}\label{CompSubset}
Let ${\bf B}_1$ be an open ball in $\mathbb R^n$ with centre $0$ and radius $1$, \linebreak ${\bf B}_1 \subset U\subset {\mathbb R}^n$.
Then, for every $\varepsilon > 0$ and any compact $F \subset {\bf B}_1$,  the restriction map ${\rm res}_{F,\,U} \circ \mathcal{R}_{\varepsilon}$ is a bounded 
operator $\Omega^{k}_{p}(U) \to \Omega^k_{\infty}(F)$. 
\end{lemma}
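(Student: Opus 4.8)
The plan is to exhibit $\mathcal{R}_\varepsilon$ as a mollification in the coordinates supplied by the diffeomorphism $h \in \Diff(\mathbb R^n,\,\mathbf{B}_1)$ and then to read off the $L_p\to L_\infty$ estimate on compacts from H\"older's inequality. Since $F \subset \mathbf{B}_1$ and $\mathfrak s_{\varepsilon v}$ is the identity outside $\mathbf{B}_1$, only the behaviour of $\mathcal{R}_\varepsilon$ on $\mathbf{B}_1$ is relevant. Using $\mathfrak s_{\varepsilon v} = h \circ s_{\varepsilon v}\circ h^{-1}$ on $\mathbf{B}_1$ together with the functoriality of the pullback, I would first rewrite
$$
\mathcal{R}_\varepsilon \omega = (h^{-1})^* \int_{\mathbb R^n} s^*_{\varepsilon v}(h^*\omega)\, f(v)\, dv,
$$
pulling the $v$-independent factor $(h^{-1})^*$ outside the integral.

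Next, writing $\tilde\omega = h^*\omega = \sum_I \tilde\omega_I\, dy^I$ on $\mathbb R^n$ and using that translations have trivial Jacobian, so that $s^*_{\varepsilon v}(dy^I)=dy^I$ and $s^*_{\varepsilon v}\tilde\omega_I(y)=\tilde\omega_I(y+\varepsilon v)$, the inner integral collapses to a convolution: after the substitution $z=y+\varepsilon v$ one obtains, for each multi-index $I$,
$$
\int_{\mathbb R^n}\tilde\omega_I(y+\varepsilon v)\, f(v)\, dv = (\tilde\omega_I * \psi_\varepsilon)(y), \qquad \psi_\varepsilon(t)=\varepsilon^{-n} f(t/\varepsilon),
$$
where I have used the symmetry $f(v)=f(-v)$. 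Thus, in the $h$-coordinates, $\mathcal{R}_\varepsilon$ acts as componentwise convolution with the standard mollifier $\psi_\varepsilon$, a smooth function supported in the ball of radius $\varepsilon$.

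The crux is the $L_p\to L_\infty$ bound. Fixing a compact $F\subset\mathbf{B}_1$, the set $\hat F:=h^{-1}(F)+\overline{B_\varepsilon}$ is compact in $\mathbb R^n$, and for every $y\in h^{-1}(F)$ the convolution only sees $\tilde\omega_I$ on $B(y,\varepsilon)\subset\hat F$. H\"older's inequality then gives
$$
\bigl|(\tilde\omega_I * \psi_\varepsilon)(y)\bigr| \le \|\tilde\omega_I\|_{\Omega^k_p(\hat F)}\,\|\psi_\varepsilon\|_{L_{p'}}, \qquad \tfrac1p+\tfrac1{p'}=1,
$$
with $\|\psi_\varepsilon\|_{L_{p'}}<\infty$ for each fixed $\varepsilon$. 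Since $h$ is a genuine diffeomorphism with bounded derivatives on the compact set $\hat F$, one has $\|\tilde\omega_I\|_{\Omega^k_p(\hat F)}\le C(h,\hat F)\,\|\omega\|_{\Omega^k_p(U)}$; this localization is exactly what lets me avoid any pathology of $h^*\omega$ near infinity, where the Jacobian of $h$ degenerates ($h$ mapping all of $\mathbb R^n$ into the bounded ball).

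Finally, I would transfer the estimate back through $(h^{-1})^*$: on $F$ the coefficients produced by $(h^{-1})^*$ are smooth functions of the derivatives of $h^{-1}$, hence bounded on the compact $F$, so the pointwise covector norm of $\mathcal{R}_\varepsilon\omega$ at $x\in F$ is dominated by a constant times $\max_I \|\tilde\omega_I * \psi_\varepsilon\|_{\Omega^k_\infty(h^{-1}(F))}$. Taking the essential supremum over $x\in F$ yields
$$
\|{\rm res}_{F,\,U}\,\mathcal{R}_\varepsilon\omega\|_{\Omega^k_\infty(F)} \le C(\varepsilon,F,h)\,\|\omega\|_{\Omega^k_p(U)},
$$
which is the asserted boundedness. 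The only step demanding genuine care is the bookkeeping in the two middle paragraphs—tracking the form components through the pullbacks and confining the $L_p$ estimate of $h^*\omega$ to the compact neighbourhood $\hat F$; the analytic content is the elementary fact that convolution with a fixed smooth mollifier maps $L_p$ into $L_\infty$ over compact sets.
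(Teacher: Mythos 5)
Your argument is correct. Note that the paper does not actually prove this lemma at all --- it only cites \cite{GP} --- so there is no internal proof to compare against; your write-up supplies the standard argument that the citation presumably contains. The reduction via $\mathfrak s_{\varepsilon v}^* = (h^{-1})^*\circ s_{\varepsilon v}^*\circ h^*$ on ${\bf B}_1$, the collapse of the inner integral to a componentwise convolution with $\psi_\varepsilon(t)=\varepsilon^{-n}f(t/\varepsilon)$, and the H\"older bound on the compact $\hat F = h^{-1}(F)+\overline{B_\varepsilon}$ are all sound, and you correctly identify the one point that genuinely requires the compactness of $F$: the Jacobian of $h\in\Diff(\mathbb R^n,\,{\bf B}_1)$ degenerates at infinity, so the comparison $\|h^*\omega\|_{L_p(\hat F)}\le C(h,\hat F)\,\|\omega\|_{\Omega^k_p(U)}$ is only available after localizing, which is exactly why the conclusion is a bound into $\Omega^k_\infty(F)$ for compact $F\subset{\bf B}_1$ rather than a global one.
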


\subsection{Decomposition of $\mathscr R$}\label{Decomp}
\begin{lemma}
The map $\mathscr R$ factors through $S\mathscr{L}^{k}_{\pi}(K)$  
\begin{center}
$$
\xymatrix@C = 0.1em{
& S\mathscr{L}^{k}_{\pi}(K)\ar@{->}[rd]^{\epsilon}&\\
\Omega^{k}_{\pi}({ K}) \ar@{-->}[ru]\ar[rr]^{\mathscr R}&& \Omega^{k}_{\pi}({ K})}
$$
\end{center}
\end{lemma}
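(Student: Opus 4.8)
The plan is to produce the dashed factoring arrow as a corestriction of $\mathscr R$, to identify $\epsilon$ with the canonical embedding of the smooth-on-simplex completion into the Lipschitz Sobolev--de Rham complex, and then to verify that both arrows are morphisms in ${\sf Ban}$ and that the triangle commutes. Write $\rho\colon \Omega^k_\pi(K)\dashrightarrow S\mathscr{L}^k_\pi(K)$ for the dashed map and $\epsilon\colon S\mathscr{L}^k_\pi(K)\to\Omega^k_\pi(K)$ for the remaining arrow; the goal is $\epsilon\rho=\mathscr R$.

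First I would set up $\epsilon$. By construction $S\mathscr{L}^k_\pi(K)$ is the completion of the smooth-on-any-simplex Lipschitz forms in the norm built from the sup-norms $\|\cdot\|_{\Omega_\infty(T)}$ over the top-dimensional simplices, whereas $\Omega^k_\pi(K)$ uses the weaker $L_{p_k}$/$L_{p_{k+1}}$ norms. Since every $T\in K([n])$ has uniformly bounded volume (bounded geometry with $L=1$), Hölder's inequality yields $\|\omega\|_{\Omega^k_{p_k}(T)}\le C\|\omega\|_{\Omega_\infty(T)}$, and likewise for $d\omega$ with exponent $p_{k+1}$. Hence the identity-on-forms map extends to a bounded $\epsilon$, which is exactly the comparison of the two completions.

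Next I would build $\rho$. By [{\bf Lemma \ref{RCorr}}] the operator $\mathscr R$ is already a bounded endomorphism of $\Omega^*_\pi(K)$ that carries every form into the space $S\mathscr{L}^k(K)$ of smooth-on-simplex Lipschitz forms, so the only point left is to control its image in the stronger $S\mathscr{L}^k_\pi$-norm. The decisive input here is [{\bf Lemma \ref{CompSubset}}]: for each vertex star $\Sigma_i$ with chart $\varphi_i$ and a compact $F_i\supset\Sigma'_i$ inside $\varphi_i^{-1}({\bf B}_1)$, the map ${\rm res}_{F_i}\circ\mathcal R_\varepsilon$ is bounded from $\Omega^k_{p_k}$ into the sup-norm space $\Omega^k_\infty(F_i)$. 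Reusing the localization from the proof of [{\bf Lemma \ref{RCorr}}]---cutting off $\omega$ by $\alpha$ and writing $\mathscr R\omega=\mathcal R_{j_1}\cdots\mathcal R_{j_n}\omega_1$ on $\Sigma'_i$---this upgrades the earlier $L_p$-bounds to
$$
\|\mathscr R\omega\|_{\Omega_\infty(\Sigma'_i)}\le (1+O(\varepsilon_i))\,\|{\rm res}_{\Sigma_i,K}\,\omega\|_{\Omega^k_{p_k}(\Sigma_i)},
$$
and, through $d\mathscr R=\mathscr R d$, the analogous estimate for $d\mathscr R\omega$ with exponent $p_{k+1}$. Raising these to the $p_k$-th (resp.\ $p_{k+1}$-th) power, summing over $i$, and invoking star-boundedness to bound the overlap of the stars $\{\Sigma_i\}$ against the simplices $\{T\}$, I would obtain $\|\mathscr R\omega\|_{S\mathscr{L}^k_\pi(K)}\le C\|\omega\|_{\Omega^k_\pi(K)}$. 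This bounded corestriction is $\rho$.

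Finally, commutativity is immediate, since both $\rho$ and $\epsilon$ act as the identity on the underlying forms: $\epsilon\rho\,\omega=\mathscr R\omega$ in $\Omega^k_\pi(K)$ for every $\omega$, which is precisely the commuting triangle. The main obstacle I anticipate is the boundedness of $\rho$ in the $S\mathscr{L}^k_\pi$-norm, i.e.\ passing from the $L_p$-control of [{\bf Lemma \ref{RCorr}}] to genuine $L_\infty$-control on each simplex; this is exactly what [{\bf Lemma \ref{CompSubset}}] is designed to furnish, after which the summation is routine given bounded geometry.
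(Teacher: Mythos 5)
Your proposal is correct and follows essentially the same route as the paper: the heart of the argument in both is to cover each top-dimensional simplex by the stars $\Sigma'_{j_l}$, invoke Lemma \ref{CompSubset} to convert the local $L_{p_k}$ (resp.\ $L_{p_{k+1}}$, via $d\mathscr R=\mathscr R d$) control into sup-norm control of $\mathscr R\omega$ on each star, and then sum using star-boundedness to bound $\|\mathscr R\omega\|_{S\mathscr L^k_\pi(K)}$ by $\|\omega\|_{\Omega^k_\pi(K)}$. Your added remarks on the boundedness of $\epsilon$ and the triviality of commutativity are left implicit in the paper but are consistent with it; the only cosmetic slip is attaching the $(1+O(\varepsilon_i))$ constant to the $L_\infty$ estimate, where the relevant constant actually comes from Lemma \ref{CompSubset} rather than from the operator-norm bound of Lemma \ref{RCorr}.
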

\begin{proof}
As it was said above, the space ${S{\mathscr L}^k_{\pi}(K)}$ is endowed with the following norm
$$
\|\omega\|_{S{\mathscr L}^k_{\pi}(K)} = \bigg(\sum\limits_{\sigma \in K([n])}  \|\omega\|_{\Omega_{\infty}(\sigma)}^{p_k}\bigg)^{\frac{1}{p_k}}
+\bigg(\sum\limits_{\sigma\in K([n])}  \|\omega\|_{\Omega_{\infty}(\sigma)}^{p_{k+1}}\bigg)^{\frac{1}{p_k+1}}.
$$

Let a star  $\Sigma_i$ be spanned by points $\{e_{j_1}, \dots,\, e_{j_n}\}$. Every 
$n$-dimensional simplex $\sigma$ can be covered by stars of $K^{\prime}$: 
$\sigma \subset \bigcup_{l = 0}^{n-1} \Sigma^{\prime}_{j_l}$,
that, applying  [{\bf Lemma \ref{CompSubset}}], 
implies

 $$\| {\rm res}_{\sigma,\, K} \circ \mathscr R \omega \|^{p_k}_{\Omega^{k}_{\infty}(\sigma)} 
 \le \sum \limits_{l = 0}^{n} \| {\rm res}_{\Sigma^{\prime}_{j_l},\, K} \circ \mathscr R \omega \|^{p_k}_{\Omega^{k}_{\infty}(\Sigma^{\prime}_{j_l})}
  \le C_1 \sum \limits_{l = 0}^{n} \| {\rm res}_{\Sigma_{j_l},\, K}  \omega \|^{p_k}_{\Omega^{k}_{p_k}(\Sigma_{j_l})}$$
  and respectively
  \begin{align*}
  \| {\rm res}_{\sigma,\, K} \circ (d\mathscr R \omega) \|^{p_{k+1}}_{\Omega^{k+1}_{\infty}(\sigma)} &=   \| {\rm res}_{\sigma,\, K} \circ \mathscr R d\omega \|^{p_{k+1}}_{\Omega^{k+1}_{\infty}(\sigma)} \\
 \le \sum \limits_{l = 0}^{n} \| {\rm res}_{\Sigma^{\prime}_{j_l},\, K} \circ \mathscr R d\omega \|^{p_{k+1}}_{\Omega^{k+1}_{\infty}(\Sigma^{\prime}_{j_l})}
  &\le C_2 \sum \limits_{l = 0}^{n} \| {\rm res}_{\Sigma_{j_l},\, K} d \omega \|^{p_{k+1}}_{\Omega^{k+1}_{p_{k+1}}(\Sigma_{j_l})}.
\end{align*}
As a result we have
\begin{align*}
\sum\limits_{\sigma_i \in K([n])} \| {\rm res}_{\sigma_i,\, K} \circ \mathscr R \omega \|^{p_k}_{\Omega^{k}_{\infty}(\sigma_i)} 
  \le C_1 \sum\limits_{\sigma _i\in K([n])}\sum\limits_{l = 0}^{n} \| {\rm res}_{\Sigma^i_{j_l},\, K}  \omega \|^{p_k}_{\Omega^{k}_{p_k}(\Sigma^i_{j_l})} \\
  \le C_1  N \sum_j \| {\rm res}_{\Sigma_{j},\, K}  \omega \|^{p_k}_{\Omega^{k}_{p_k}(\Sigma_{j})}\\
    \le C_1  N n \sum_i \| {\rm res}_{\sigma_{i},\, K}  \omega \|^{p_k}_{\Omega^{k}_{p_k}(\sigma_{i})} = C_1^{\prime}\|\omega \|^{p_k}_{\Omega^{k}_{p_k}(K)}.
  \end{align*}
  and
  \begin{align*}
\sum_i \| {\rm res}_{\sigma_i,\, K} \circ (d\mathscr R \omega) \|^{p_{k+1}}_{\Omega^{k+1}_{\infty}(\sigma_i)} 
  \le C_2 \sum_i \sum \limits_{l = 0}^{n} \| {\rm res}_{\Sigma^i_{j_l},\, K}  d\omega \|^{p_{k+1}}_{\Omega^{k+1}_{p_{k+1}}(\Sigma^i_{j_l})} \\
  \le C_2  N \sum_j \| {\rm res}_{\Sigma_{j},\, K}  d\omega \|^{p_{k+1}}_{\Omega^{k+1}_{p_{k+1}}(\Sigma_{j})}\\
    \le C_2 N n \sum_i \| {\rm res}_{\sigma_{i},\, K}  d\omega \|^{p_{k+1}}_{\Omega^{k+1}_{p_{k+1}}(\sigma_{i})} = C_2^{\prime}\|d\omega \|^{p_{k+1}}_{\Omega^{k+1}_{p_{k+1}}(K)}.
  \end{align*}
  $$
\|\mathscr R \omega\|_{S{\mathscr L}^k_{\pi}(K)} \le C_1^{\prime}\|\omega \|_{\Omega^{k}_{p_k}(K)}+C_2^{\prime}\|d\omega \|_{\Omega^{k+1}_{p_{k+1}}(K)}
  $$
\end{proof}
\begin{lemma}
For every $m$-dimensional skeleton $K[m]$ of $K$, the operator \linebreak
${{\rm res}_{K[m],\, K}\circ \mathscr R}$
is a morphism $\Omega^{k}_{\pi}({ K}) \to \Omega^{k}_{\pi}(K[m])$ in ${\sf Ban}$ (a bounded operator).
\end{lemma}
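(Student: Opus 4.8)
The plan is to reduce the statement to two facts already established: that $\mathscr R$ carries $\Omega^k_\pi(K)$ into the space $S\mathscr L^k(K)$ of forms that are smooth on every simplex (Lemma \ref{RCorr}), and that the preceding decomposition lemma bounds the $S\mathscr L^k_\pi(K)$-norm of $\mathscr R\omega$ by $C_1'\|\omega\|_{\Omega^k_{p_k}(K)}+C_2'\|d\omega\|_{\Omega^{k+1}_{p_{k+1}}(K)}$. The first fact is what makes the composition meaningful at all: a raw $L_{p_k}$-form cannot be restricted to $K[m]$, which is a null set inside $K$, whereas a form that is smooth on each simplex has a genuine trace on every face. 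So the first step is to record that ${\rm res}_{K[m],K}\circ\mathscr R$ is defined as the pullback of the smooth-on-simplices form $\mathscr R\omega$ along the isometric inclusion $K[m]\hookrightarrow K$, and is therefore smooth on each $m$-simplex.

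The second step is to compare the relevant $L_\infty$ quantities simplexwise. For an $m$-simplex $\sigma$ lying in the closure of an $n$-simplex $T$, the inclusion $\iota\colon\sigma\hookrightarrow T$ is an isometric embedding, so the pointwise comass of $\iota^{*}(\mathscr R\omega)$ is at most that of $\mathscr R\omega$; taking suprema gives $\|\mathscr R\omega\|_{\Omega_\infty(\sigma)}\le\|\mathscr R\omega\|_{\Omega_\infty(T)}$, and likewise for $d\mathscr R\omega$ (using that pullback commutes with $d$ and that $d\mathscr R=\mathscr R d$). I would then pass from sums over $m$-simplices to sums over top simplices: assigning to each $m$-simplex one containing $n$-simplex and noting that a given $T$ has only $\binom{n+1}{m+1}$ faces of dimension $m$, bounded geometry (star-boundedness together with purity of the triangulation) yields $\sum_{\sigma}\|\mathscr R\omega\|^{p_k}_{\Omega_\infty(\sigma)}\le\binom{n+1}{m+1}\sum_{T}\|\mathscr R\omega\|^{p_k}_{\Omega_\infty(T)}$, and the analogous inequality for $d\mathscr R\omega$ with exponent $p_{k+1}$.

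The third step is the volume estimate. Each $m$-simplex has uniformly bounded volume ($L=1$), so $\int_\sigma|{\rm res}\,\mathscr R\omega|^{p_k}\le V\,\|\mathscr R\omega\|^{p_k}_{\Omega_\infty(\sigma)}$, and summing over $\sigma\in K[m]([m])$ converts the $L_\infty$-control of the previous step into control of $\|{\rm res}_{K[m],K}\,\mathscr R\omega\|_{\Omega^k_{p_k}(K[m])}$; the same argument applied to $d\,{\rm res}\,\mathscr R\omega={\rm res}\,\mathscr R\,d\omega$ controls the $\Omega^{k+1}_{p_{k+1}}(K[m])$-part of the norm. Chaining these with the decomposition lemma gives
\begin{align*}
\|{\rm res}_{K[m],K}\circ\mathscr R\,\omega\|_{\Omega^k_\pi(K[m])}
&\le C\,\|\mathscr R\omega\|_{S\mathscr L^k_\pi(K)}\\
&\le C'\bigl(\|\omega\|_{\Omega^k_{p_k}(K)}+\|d\omega\|_{\Omega^{k+1}_{p_{k+1}}(K)}\bigr)
= C'\|\omega\|_{\Omega^k_\pi(K)},
\end{align*}
which, together with linearity of the composition, is the asserted boundedness in ${\sf Ban}$.

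I expect the only genuine subtlety to be conceptual rather than computational: ensuring that the trace ${\rm res}_{K[m],K}\,\mathscr R\omega$ is well defined, which is precisely why the regularization must be applied before restricting — on the dense subspace of Lipschitz forms the trace is classical, and the uniform estimate above lets it extend continuously to all of $\Omega^k_\pi(K)$. The combinatorial passage from $m$-simplices to $n$-simplices and the volume bound are routine once bounded geometry is invoked; the one point to state carefully is purity of the complex (every $m$-simplex is a face of an $n$-simplex), which holds for the bi-Lipschitz triangulations of manifolds under consideration.
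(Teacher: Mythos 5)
Your proposal is correct and follows essentially the same route as the paper: factor through the previously established bound $\|\mathscr R\omega\|_{S\mathscr L^k_\pi(K)}\le C_1'\|\omega\|_{\Omega^k_{p_k}(K)}+C_2'\|d\omega\|_{\Omega^{k+1}_{p_{k+1}}(K)}$, control the $L_\infty$-norm on each $m$-simplex by the $L_\infty$-norms on the top-dimensional simplices containing it (with the combinatorial factor $\binom{n+1}{m+1}$), convert to $L_{p_k}$ via the uniformly bounded simplex volume, and repeat for $d\mathscr R\omega=\mathscr R d\omega$. Your explicit remarks on well-definedness of the trace and on purity of the complex are points the paper leaves implicit, but they do not change the argument.
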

\begin{proof}
In order to prove the lemma, we should complete the diagram with dashed arrows in such a way that the final diagram commutes 
\begin{center}
$$
\xymatrix@C = 0.7em{
& \Omega^{k}_{\pi}(K[m])&\\
& S\mathscr{L}^{k}_{\pi}(K)\ar[r]^{\delta}\ar@{->}[rd]^{\epsilon}\ar@{-->}[u]^{\gamma}&\im\epsilon\ar@/{}_{1.1pc}/@{->}[lu]_-{{\rm res}_{K[m],\, K}}\ar[d]^{\iota}\\
\Omega^{k}_{\pi}({ K}) \ar@{->}[ru]^{\beta}\ar[rr]^{\mathscr R}\ar@/{}^{2.1pc}/@{-->}[ruu]^{\alpha}&& \Omega^{k}_{\pi}({ K})
}
$$
\end{center}
The previous lemma provides us with the decomposition $\mathscr R = \epsilon\beta$. Also, we have $\epsilon = \iota\delta$.
In fact, the domain of  ${\rm res}_{K[m],\, K}$ is $\im \epsilon$. Then we can define $\alpha = \gamma\beta$. 
Below, we verify that it is valid.
\begin{align*}
\|{\rm res}_{K[m],\, K}&\circ \mathscr R \omega\|_ {\Omega^{k}_{\pi}(K[m])} \\
= \|{\rm res}_{K[m],\, K}\circ \mathscr R \omega\|_ {\Omega^{k}_{p_k}(K[m])}
&+\|{\rm res}_{K[m],\, K}\circ \mathscr (dR \omega)\|_ {\Omega^{k+1}_{p_{k+1}}(K[m])} 
\end{align*}
 Hence 
 \begin{align*}
 \|{\rm res}_{K[m],\, K}\circ \mathscr R \omega\|^{p_k}_ {\Omega^{k}_{p_k}(K[m])} = \sum_{i} \|{\rm res}_{\tau_i,\, K}\circ \mathscr R \omega\|^{p_k}_ {\Omega^{k}_p(\tau_i)}\\
 \le\sum_{i}  ({\rm mes}\, \tau_i) \|{\rm res}_{\tau_i,\, K}\circ \mathscr R \omega\|^{p_k}_ {\Omega^{k}_{\infty}(\tau_i)}.
 \end{align*}
 It is not hard to see that
 $$
 \|{\rm res}_{\tau_i,\, K}\circ \mathscr R \omega\|^{p_k}_ {\Omega^{k}_{\infty}(\tau_i)} 
 \le \sum_{\sigma \in K[n],\,\tau_i \hookrightarrow \sigma}\|{\rm res}_{\sigma,\, K}\circ \mathscr R \omega\|^{p_k}_ {\Omega^{k}_{\infty}(\sigma)}.
$$
As a result, we have
\begin{align*}
\|{\rm res}_{K[m],\, K}\circ \mathscr R \omega\|^{p_k}_ {\Omega^{k}_{p_k}(K[m])} 
&\le \bigg( \frac{\sqrt{m+1}}{m!\sqrt{2^{m}}}\bigg)
\sum_{i}  \sum_{\sigma \in K[n],\,\tau_i \hookrightarrow \sigma}\|{\rm res}_{\sigma,\, K}\circ \mathscr R \omega\|^{p_k}_ {\Omega^{k}_{\infty}(\sigma)} \\
&\le \bigg( \frac{\sqrt{m+1}}{m!\sqrt{2^{m}}}\bigg)\binom{n+1}{m+1}C\|\omega \|^{p_k}_{\Omega^{*}_{p_k}(K)}.
\end{align*}
Here we should again  make a note that $d \mathscr R = \mathscr R d$, and the above argument holds for $d \omega$.
\end{proof}
\begin{corollary}
For every $m$-dimensional skeleton $K[m]$ of $K$, the operator \linebreak
${{\rm res}_{K[m],\, K}\circ \mathscr R}$
is a morphism $\Omega^{k}_{\pi}({ K}) \to S\mathscr L^{k}_{\pi}(K[m])$ in ${\sf Ban}$ as well.
\end{corollary}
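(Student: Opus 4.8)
The plan is to combine two facts already in hand. The preceding lemma shows that ${\rm res}_{K[m],\,K}\circ\mathscr R$ is a bounded map into $\Omega^{k}_{\pi}(K[m])$, while Lemma \ref{RCorr} guarantees that $\mathscr R$ already takes values in $S\mathscr L^{k}(K)$, i.e.\ in forms that are smooth on every simplex. Since restriction to a subcomplex preserves smoothness on simplices together with the Lipschitz property, ${\rm res}_{K[m],\,K}\circ\mathscr R\omega$ lies in $S\mathscr L^{k}(K[m])$ as a set-theoretic matter; the only thing left to verify is boundedness with respect to the stronger $S\mathscr L^{k}_{\pi}(K[m])$ norm, whose two terms involve the sup-norm $\|\cdot\|_{\Omega^{k}_{\infty}(\tau)}$ over $m$-simplices $\tau$ of the skeleton rather than $L_{p}$-integrals. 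The argument is therefore a rerun of the decomposition lemma, but carried out over $m$-dimensional faces.

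First I would fix an $m$-simplex $\tau$ of $K[m]$ with vertices $\{e_{j_0},\dots,e_{j_m}\}$ and cover it by the $K'$-stars of its vertices, $\tau\subset\bigcup_{l=0}^{m}\Sigma'_{j_l}$, exactly as before but now with $m+1$ stars in place of $n$. Passing through the charts $\varphi_{j_l}$ and applying Lemma \ref{CompSubset} to the compact set $\varphi_{j_l}(\Sigma'_{j_l})\subset{\bf B}_1$, the regularization $\mathcal R_{\varepsilon}$ converts the $L_{p_k}$-data on the enclosing star $\Sigma_{j_l}$ into an $L_\infty$-bound on $\Sigma'_{j_l}$:
\[
\|{\rm res}_{\tau,\,K}\circ\mathscr R\omega\|^{p_k}_{\Omega^{k}_{\infty}(\tau)}\le\sum_{l=0}^{m}\|{\rm res}_{\Sigma'_{j_l},\,K}\circ\mathscr R\omega\|^{p_k}_{\Omega^{k}_{\infty}(\Sigma'_{j_l})}\le C_1\sum_{l=0}^{m}\|{\rm res}_{\Sigma_{j_l},\,K}\omega\|^{p_k}_{\Omega^{k}_{p_k}(\Sigma_{j_l})}.
\]
Then I would sum over all $m$-simplices $\tau$ and invoke star-boundedness twice: each vertex lies in a uniformly bounded number $N'$ of $m$-simplices, so each star $\Sigma_j$ is counted a bounded number of times, and each top-dimensional simplex of $K$ meets only boundedly many stars. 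This collapses the double sum to $C_1 N' M\,\|\omega\|^{p_k}_{\Omega^{k}_{p_k}(K)}$, giving control of the first term of the $S\mathscr L^{k}_{\pi}(K[m])$ norm. Since $d\mathscr R=\mathscr R d$, replacing $\omega$ by $d\omega$ and $p_k$ by $p_{k+1}$ handles the second term verbatim, and adding the two estimates yields $\|{\rm res}_{K[m],\,K}\circ\mathscr R\omega\|_{S\mathscr L^{k}_{\pi}(K[m])}\le C\,\|\omega\|_{\Omega^{k}_{\pi}(K)}$.

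The heart of the matter, and the only genuine obstacle, is the passage from the $L_{p_k}$-information carried by $\omega$ on $K$ to the pointwise (sup-norm) control demanded by the $S\mathscr L$-norm on the lower-dimensional skeleton. A generic $\Omega^{k}_{\pi}$-form has no well-defined trace on an $m$-dimensional face and certainly no sup-norm bound; it is precisely the smoothing effect of $\mathscr R$, encoded in Lemma \ref{CompSubset}, that manufactures the $L_p\to L_\infty$ gain on compact subsets and makes the skeleton restriction both meaningful and bounded. The remaining combinatorics is routine once bounded geometry, through star-boundedness, supplies the uniform multiplicities $N'$ and $M$.
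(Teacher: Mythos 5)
Your argument is correct and follows essentially the same route as the paper: the paper's own proof is a one-line reduction, bounding $\|{\rm res}_{\tau,\,K}\circ\mathscr R\omega\|_{\Omega^k_\infty(\tau)}$ by the sum of sup-norms over the top-dimensional simplices containing $\tau$ and then citing the bound $\sum_{\sigma\in K[n]}\|{\rm res}_{\sigma,\,K}\circ\mathscr R\omega\|^{p_k}_{\Omega^k_\infty(\sigma)}\le C\|\omega\|^{p_k}_{\Omega^k_{p_k}(K)}$ already established in the decomposition lemma, whereas you re-derive that bound directly at the level of $m$-faces via the star covering and Lemma \ref{CompSubset}. Since both versions rest on the same $L_p\to L_\infty$ smoothing of Lemma \ref{CompSubset} and the same star-boundedness combinatorics, the difference is only bookkeeping.
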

\begin{proof}
 The result follows directly from the below
 $$
 \|{\rm res}_{\tau_i,\, K}\circ \mathscr R \omega\|^{p_k}_ {\Omega^{k}_{\infty}(\tau_i)} 
 \le \sum_{\sigma \in K[n],\,\tau_i \hookrightarrow \sigma}\|{\rm res}_{\sigma,\, K}\circ \mathscr R \omega\|^{p_k}_ {\Omega^{k}_{\infty}(\sigma)}.
$$
\end{proof}
\subsection{Properties of the homotopy $\mathscr A$}
\begin{lemma}\label{ACorr}
The arrow 
$\Omega^{k}_{{p_k},\,{p_{k+1}}}({ K}) \xrightarrow{\mathscr{A}} \Omega^{k-1}_{{p_{k-1}},\,{p_{k}}}({ K})$ is a morphism in  the category ${\sf Ban}$. 
\end{lemma}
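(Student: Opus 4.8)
The plan is to prove boundedness in the mixed target norm $\|\cdot\|_{\Omega^{k-1}_{p_{k-1},\,p_k}} = \|\cdot\|_{\Omega^{k-1}_{p_{k-1}}} + \|d\,\cdot\,\|_{\Omega^{k}_{p_{k}}}$ by establishing two separate estimates: a direct bound on $\|\mathscr A\omega\|_{\Omega^{k-1}_{p_{k-1}}}$, and an indirect bound on $\|d\mathscr A\omega\|_{\Omega^{k}_{p_k}}$ obtained through the homotopy identity of Lemma \ref{Hm}, so that I never have to differentiate the defining series termwise.

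For the first estimate I would argue one barycentric star at a time, exactly as in the proof of Lemma \ref{RCorr}. Writing $\mathscr A\omega = \sum_{i} \mathcal R_1 \cdots \mathcal R_{i-1}\mathcal A_i\omega$, the support-localization argument already used for $\mathscr R$ shows that after restriction to a fixed star $\Sigma'_j$ only finitely many summands survive, and star-boundedness bounds their number uniformly in terms of $N$. For a single surviving term I would estimate the innermost factor $\mathcal A_i = (\varphi_i^{-1})^*\mathcal A_\varepsilon\varphi_i^*$ via the factorization $\mathcal A_\varepsilon = (\mathcal R_\varepsilon - {\bf 1})S$: Lemma \ref{TGIneq} applied with $p = p_k$, $r = p_{k+1}$, $q = p_{k-1}$ yields $\|S\varphi_i^*\omega\|_{\Omega^{k-1}_{p_{k-1}}} \le C\|\varphi_i^*\omega\|_{\Omega^k_{p_k,\,p_{k+1}}}$, the hypotheses $\tfrac1{p_k}-\tfrac1{p_{k-1}}\le\tfrac1n$ and $\tfrac1{p_{k+1}}-\tfrac1{p_k}\le\tfrac1n$ being guaranteed by the admissibility constraint imposed on $\pi$; Lemma \ref{GKSEst} then bounds $(\mathcal R_\varepsilon - {\bf 1})$ on $L_{p_{k-1}}$, while the outer operators $\mathcal R_1\cdots\mathcal R_{i-1}$ contribute at most a factor $(1+O(\varepsilon))^{N}$ by the same $L_p$-bound used in Lemma \ref{RCorr}. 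The bi-Lipschitz transplantation constants carried by $\varphi_i^*$ and $(\varphi_i^{-1})^*$ are uniform thanks to bounded geometry with $L=1$. Summing the resulting per-star inequalities over $j$, raising to the appropriate power, and invoking the finite-overlap property of the stars upgrades these local bounds to the global estimate $\|\mathscr A\omega\|_{\Omega^{k-1}_{p_{k-1}}(K)} \le C\|\omega\|_{\Omega^k_{p_k,\,p_{k+1}}(K)}$.

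For the second estimate I would use $d\mathscr A = \mathscr R - {\bf 1} - \mathscr A d$, that is $d\mathscr A\omega = \mathscr R\omega - \omega - \mathscr A(d\omega)$. The first two terms are controlled in $\Omega^k_{p_k}$ by the $L_{p_k}$-boundedness of $\mathscr R$ established inside the proof of Lemma \ref{RCorr}. For the last term I would reapply the machinery of the first estimate to the $(k+1)$-form $d\omega$: since $d(d\omega)=0$, the Sobolev estimate of Lemma \ref{TGIneq} for $S$ acting on $(k+1)$-forms needs only the $L_{p_{k+1}}$-norm of $d\omega$ (the $r$-exponent is then free and the $d(d\omega)$-contribution vanishes), which gives $\|\mathscr A(d\omega)\|_{\Omega^k_{p_k}(K)} \le C\|d\omega\|_{\Omega^{k+1}_{p_{k+1}}(K)}$. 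Both pieces are majorized by the source norm $\|\omega\|_{\Omega^k_{p_k,\,p_{k+1}}}$, so the two estimates together establish the claim.

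The main obstacle I anticipate is not any single inequality but the bookkeeping that makes the infinite sum defining $\mathscr A$ behave like a uniformly finite one: I must verify that the support-localization of Lemma \ref{RCorr} genuinely truncates the series on each star, that the number of surviving terms together with all transplant and Sobolev constants are uniform across stars (this is precisely where bounded geometry with $L=1$ and star-boundedness are indispensable), and that the degree shift $k\mapsto k-1$ is matched to the correct pair of exponents $p_{k-1},p_k$ at every step, so that each invocation of Lemma \ref{TGIneq} lands on an admissible pair of the sequence $\pi$.
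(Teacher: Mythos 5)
Your proposal is correct and follows essentially the same route as the paper: localize $\mathscr A\omega$ star by star via the support-truncation argument of Lemma \ref{RCorr} (so that on each simplex only a uniformly bounded number of summands survive), bound each surviving term through the factorization $\mathcal A_\varepsilon = (\mathcal R_\varepsilon - {\bf 1})S$ using the Sobolev estimate of Lemma \ref{TGIneq} together with the $L_p$-bounds of Lemma \ref{GKSEst}, and sum over the finite overlaps guaranteed by bounded geometry. The only variation is that you control $\|d\mathscr A\omega\|_{\Omega^{k}_{p_k}}$ through the global identity $d\mathscr A = \mathscr R - {\bf 1} - \mathscr A d$, whereas the paper differentiates the defining series termwise and dispatches that half with ``similarly''; both reduce to the same estimate $\|\mathscr A(d\omega)\|_{\Omega^{k}_{p_k}} \le C\|d\omega\|_{\Omega^{k+1}_{p_{k+1}}}$, and your ordering (first prove the series for $\mathscr A\omega$ and $\mathscr A(d\omega)$ converge, then invoke the homotopy identity) keeps the appeal to Lemma \ref{Hm} free of circularity.
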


\begin{proof}
Let $\omega \in \Omega^k_{{p_k},\,{p_{k+1}}}(M)$ consider  $\mathcal A_i\omega$. It is not hard to see that  \linebreak
${\rm{supp}(\mathcal A_i \omega) \subset \varphi^{-1}({\bf B}_1)}$. 
Indeed, $\mathfrak s_{tv}^*$  acts on  the complement $K \setminus \varphi^{-1}({\bf B}_1)$  leaving points fixed. 
It follows that  $ \frak X_v = 0$ and,  consequently, $\iota_{\frak X_v}$ is a zero map at every point belonging to $K \setminus \varphi^{-1}({\bf B}_1)$.

Let  $\Sigma_i$ be spanned by points $\{e_{j_1}, \dots,\, e_{j_n}\}$. 
For every form 
compactly supported inside  ${\varphi^{-1}({\bf B}_1) \subset \Sigma_i}$ there are only a fixed number of operators, namely  $\mathcal R_{j_1},\dots,\, R_{j_n}$, which are distinct from the identity.
Similarly to the above, we can choose sufficiently small $\varepsilon$  for each operator $R_{j_k}$ in the composition ${\mathcal R_{1}\dots\mathcal  R_{i-1}\mathcal A_i }$,
that allows us to preserve the support of a form derived under every partial composition inside ${\rm Int}\, \Sigma_i$.
Then we have 
${\mathcal R_{1}\dots\mathcal  R_{i-1}\mathcal A_i \omega = \mathcal R_{j_1}\dots\mathcal  R_{j_n} \mathcal A_i \omega}$ and 
${\mathcal R_{j_1}\dots R_{j_n} \mathcal A_i \omega = 0}$  outside $\Sigma_i$. Now we can estimate the norm $\|\mathscr A \omega\|_{\Omega^{k-1}_{p_{k-1}}(K)}$ due to \cite{GT}:
$$
\|\mathcal R_{j_1}\dots R_{j_n} \mathcal A_i \omega\|_{\Omega^{k-1}_{p_{k-1}}(\Sigma_i)} \le C^{n}_{p_{k}} M_{p_{k}}\|\omega\|_{\Omega^{k}_{p_{k}}(\Sigma_i)}.
$$
So we have
\begin{align*}
&\|\mathscr A \omega\|_{\Omega^{k-1}_{{p_{k-1}},\,{p_k}}(K)} = \|\mathscr A \omega\|_{\Omega^{k-1}_{{p_{k-1}}}(K)} + \| d\mathscr A \omega\|_{\Omega^{k}_{{p_k}}(K)} \\
= &(\int\limits_{K}\left|\sum_{i} \mathcal R_{1}\dots R_{i-1} \mathcal A_i \omega  \right|^{p_{k-1}} dx)^{\frac{1}{{p_{k-1}}}} + (\int_{K}\left|\sum_{i} d\mathcal R_{1}\dots R_{i-1} \mathcal A_i \omega  \right|^{p_k} dx)^{\frac{1}{{p_k}}}\\
= &(\sum_{\sigma \in K[n]} \int\limits_{\sigma} \left|\sum_{i} {\rm res}_{\sigma,\, K} \mathcal R_{1}\dots R_{i-1} \mathcal A_i \omega  \right|^{p_{k-1}} dx)^{\frac{1}{{p_{k-1}}}}\\
+ &(\sum_{\sigma \in K[n]} \int\limits_{\sigma}\left|\sum_{i} {\rm res}_{\sigma,\, K} d\mathcal R_{1}\dots R_{i-1} \mathcal A_i \omega  \right|^{p_k} dx)^{\frac{1}{{p_k}}}.
\end{align*}
Every simplex $\sigma = \{e_0,\dots,\,e_n\}$ is the intersection of stars assigned to its vertices \linebreak
$\sigma = \bigcap_{e_i\in  \{e_0,\dots,\,e_n\}} \Sigma_i$, and moreover, 
$\sigma$ lies in no other star. And it follows that 
$$
\sum_{i} {\rm res}_{\sigma,\, K} \mathcal R_{1}\dots R_{i-1} \mathcal A_i \omega  = \sum_{e_j \in \{e_0,\dots,\,e_n\}} {\rm res}_{\sigma,\, K} \mathcal R_{1}\dots R_{j-1} \mathcal A_j \omega 
$$ 
and
$$
\sum_{i} {\rm res}_{\sigma,\, K} d \mathcal R_{1}\dots R_{i-1} \mathcal A_i \omega  = \sum_{e_j \in \{e_0,\dots,\,e_n\}} {\rm res}_{\sigma,\, K} d \mathcal R_{1}\dots R_{j-1} \mathcal A_j \omega.
$$ 
Hence
\begin{align*}
\|\mathscr A \omega\|_{\Omega^{k-1}_{{p_{k-1}},\,{p_k}}(K)} 
=& (\sum_{\sigma \in K[n]}  \int\limits_{\sigma} \left| \sum_{e_j \in \{e_0,\dots,\,e_n\}}{\rm res}_{\sigma,\, K} \mathcal R_{1}\dots R_{j-1} \mathcal A_j \omega  \right|^{p_{k-1}} dx)^{\frac{1}{{p_{k-1}}}}\\ 
+&  (\sum_{\sigma \in K[n]} \int\limits_{\sigma}\left| \sum_{e_j \in \{e_0,\dots,\,e_n\}} {\rm res}_{\sigma,\, K} d\mathcal R_{1}\dots R_{j-1} \mathcal A_j \omega  \right|^{p_k} dx)^{\frac{1}{{p_k}}}\\
\le (n+1)^{\frac{{p_{k-1}}-1}{{p_{k-1}}}} &(\sum_{\sigma \in K[n]}  \int\limits_{\sigma}  \sum_{e_j \in \{e_0,\dots,\,e_n\}}|{\rm res}_{\sigma,\, K} \mathcal R_{1}\dots R_{j-1} \mathcal A_j \omega  |^{p_{k-1}} dx)^{\frac{1}{{p_{k-1}}}}\\
+ (n+1)^{\frac{{p_k}-1}{{p_k}}} &(\sum_{\sigma \in K[n]}  \int\limits_{\sigma} \sum_{e_j \in \{e_0,\dots,\,e_n\}} |{\rm res}_{\sigma,\, K} d\mathcal R_{1}\dots R_{j-1} \mathcal A_j \omega  |^{p_k} dx)^{\frac{1}{{p_k}}}.
\end{align*}
Let us check the following
\begin{align*}
  \int\limits_{\sigma}  &\sum\limits_{e_j \in \{e_0,\dots,\,e_n\}}|{\rm res}_{\sigma,\, K} \mathcal R_{1}\dots R_{j-1} \mathcal A_j \omega  |^{p_{k-1}} dx\\
  = &\sum_{e_j \in \{e_0,\dots,\,e_n\}}   \int\limits_{\sigma}|{\rm res}_{\sigma,\, K} \mathcal R_{1}\dots R_{j-1} \mathcal A_j \omega  |^{p_{k-1}} dx\\
  \le &\sum_{e_j \in \{e_0,\dots,\,e_n\}}   \int\limits_{\Sigma_j}| \mathcal R_{1}\dots R_{j-1} \mathcal A_j \omega  |^{p_{k-1}} dx\\
  = &\sum_{e_j \in \{e_0,\dots,\,e_n\}}   \int\limits_{\Sigma_j}| \mathcal R_{j_0}\dots R_{j_n} \mathcal A_j \omega  |^{p_{k-1}} dx\\
  = &\sum_{e_j \in \{e_0,\dots,\,e_n\}}  \|\mathcal R_{j_0}\dots R_{j_n} \mathcal A_j \omega  \|^{p_{k-1}}_{\Omega^{k-1}_{p_{k-1}}(\Sigma_j)}\\
  \le &\sum_{e_j \in \{e_0,\dots,\,e_n\}}  C^{n}_{p_k} M_{p_k}\|\omega\|^{p_{k-1}}_{\Omega^{k}_{{p_k}}(\Sigma_j)}.
\end{align*}
That implies
\begin{align*}
\Bigg( \sum_{\sigma \in K[n]}  \int\limits_{\sigma}  &\sum_{e_j \in \{e_0,\dots,\,e_n\}}|{\rm res}_{\sigma,\, K} \mathcal R_{1}\dots R_{j-1} \mathcal A_j \omega  |^{p_{k-1}} dx\Bigg)^{\frac{1}{{p_{k-1}}}}\\
\le  \Bigg(\sum_{\sigma \in K[n]}  &\sum_{e_j \in \{e_0,\dots,\,e_n\}}  C^{\prime}\|\omega\|^{p_{k-1}}_{\Omega^{k}_{p_{k}}(\Sigma_j)}\Bigg)^{\frac{1}{{p_{k-1}}}}
\le {C^{\prime}}^{\frac{1}{{p_{k-1}}}} (n+1)\|\omega\|_{\Omega^{k}_{p_{k}}(K)}.
\end{align*}
 
Similarly, we can estimate $\| d\mathscr A \omega\|_{\Omega^{k-1}_{{p_k}}(K)}$.
And, as  a result, we have
$$
\|\mathscr A \omega\|_{\Omega^{k-1}_{{p_{k-1}},\,{p_k}}(K)} \le C^{\prime\prime}\| \omega\|_{\Omega^{k}_{{p_{k}},\,{p_{k+1}}}(K)}
$$
\end{proof}
{\centering
\section{The de Rham Operator $\mathscr I$ }\label{dRhOp}
}

Let us define an operator
$$
 \mathscr I \colon S\mathscr{L}^k_{\pi}(K) \to C^k_{\pi}(K)
$$
by the following
$$
\omega \mapsto \{\sigma \mapsto \int_\sigma \omega \}. 
$$
\begin{lemma}\label{Split}
Let $K$ be a simplicial complex of bounded geometry, then the map $\mathscr I$ is a split epimorphism,
in other words, $S\mathscr{L}^k_{\pi}(K) \cong C^k_{\pi}(K) \oplus {\rm ker}^k( \mathscr I)$
\end{lemma}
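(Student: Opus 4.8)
The plan is to produce an explicit bounded right inverse of $\mathscr I$ by means of the Whitney embedding and then to read off the splitting from the bounded idempotent it generates. Let $\mu_{0},\dots,\mu_{n}$ denote the barycentric coordinates on the simplices of $K$, and for an oriented $k$-simplex $\sigma = [e_{i_{0}},\dots,e_{i_{k}}]$ set
$$
W_{\sigma} = k!\sum_{j=0}^{k}(-1)^{j}\,\mu_{i_{j}}\,d\mu_{i_{0}}\wedge\cdots\wedge\widehat{d\mu_{i_{j}}}\wedge\cdots\wedge d\mu_{i_{k}},
$$
the elementary Whitney form, and define $W\colon C^{k}_{\pi}(K)\to S\mathscr L^{k}_{\pi}(K)$ by $Wc = \sum_{\sigma}c(\sigma)\,W_{\sigma}$. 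Each $W_{\sigma}$ is piecewise linear, hence smooth on every simplex and Lipschitz, so $Wc$ lands in $S\mathscr L^{k}_{\pi}(K)$. The classical duality $\int_{\tau}W_{\sigma}=\delta_{\sigma\tau}$ for $k$-simplices $\tau$ gives at once $\mathscr I W = {\bf 1}_{C^{k}_{\pi}}$; in particular $\mathscr I$ is surjective. I would also record that $W$ is a cochain map, $dW = W\partial$, so that the construction stays compatible with the differentials used later.

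Next I would verify that both maps are morphisms in ${\sf Ban}$. For $\mathscr I$ the estimate $|(\mathscr I\omega)(\sigma)| = |\int_{\sigma}\omega|\le \vol(\sigma)\,\|\omega\|_{\Omega_{\infty}(\sigma)}$ together with the chain-map identity $\partial\mathscr I = \mathscr I d$ reduces the $C^{k}_{\pi}$-norm of $\mathscr I\omega$ to the $S\mathscr L^{k}_{\pi}$-norm of $\omega$, the uniform bound on $\vol(\sigma)$ being furnished by bounded geometry with $L=1$. For $W$ the point is that, on each top-dimensional simplex $T$, the restriction $Wc|_{T}$ is a linear combination of the boundedly many forms $W_{\sigma}$ with $\sigma\subset T$, and both $\|W_{\sigma}\|_{\Omega_{\infty}(T)}$ and $\|dW_{\sigma}\|_{\Omega_{\infty}(T)}$ are bounded by a constant depending only on $n$, since the barycentric coordinates and their differentials are controlled once $L=1$ is fixed. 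A power-mean inequality over the $\binom{n+1}{k+1}$ faces of $T$, followed by interchanging the sum over $T$ with the sum over $k$-simplices $\sigma$ (each $\sigma$ lying in at most $N$-many top simplices by star-boundedness), yields $\|Wc\|_{S\mathscr L^{k}_{\pi}}\le C\|c\|_{C^{k}_{\pi}}$, the second summand of the $S\mathscr L$-norm being handled through $dWc = W\partial c$.

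With both maps bounded and $\mathscr I W = {\bf 1}$ in hand, the splitting is formal. The operator $P := W\mathscr I$ is a bounded idempotent on $S\mathscr L^{k}_{\pi}(K)$, since $P^{2} = W(\mathscr I W)\mathscr I = W\mathscr I = P$, and a bounded idempotent on a Banach space induces a topological direct sum $S\mathscr L^{k}_{\pi}(K) = \im P\oplus \ker P$ with both summands closed. Here $\ker P = \ker^{k}\mathscr I$: the inclusion $\ker\mathscr I\subset\ker P$ is clear, and if $Px = 0$ then applying $\mathscr I$ and using $\mathscr I W = {\bf 1}$ gives $\mathscr I x = 0$. Likewise $\im P = \im W$, and the restriction of $W$ is a bounded isomorphism $C^{k}_{\pi}(K)\xrightarrow{\sim}\im W$ with inverse $\mathscr I|_{\im W}$. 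Combining these identifications produces the desired decomposition $S\mathscr L^{k}_{\pi}(K)\cong C^{k}_{\pi}(K)\oplus\ker^{k}\mathscr I$.

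The main obstacle I anticipate is the boundedness of $W$, and specifically the passage between the two bookkeeping schemes: the $\ell^{p}$-sums defining the simplicial Sobolev norm run over $k$-simplices, whereas the $S\mathscr L^{k}_{\pi}$-norm aggregates $L_{\infty}$-norms over top-dimensional simplices. Controlling this interchange uniformly is exactly where bounded geometry enters twice---through $L=1$, which makes the Whitney forms and their exterior derivatives uniformly bounded simplexwise, and through star-boundedness $N$, which keeps finite the number of top simplices sharing a given face---so that no constant blows up as the complex grows. Everything else is the standard Whitney calculus together with the elementary functional analysis of split idempotents.
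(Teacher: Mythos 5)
Your proposal follows essentially the same route as the paper: the Whitney map $W$ built from the elementary forms $k!\sum_j(-1)^j\mu_{i_j}\,d\mu_{i_0}\wedge\cdots\wedge\widehat{d\mu_{i_j}}\wedge\cdots\wedge d\mu_{i_k}$, boundedness of $W$ via the uniform simplexwise bounds and star-boundedness, boundedness of $\mathscr I$ via the volume estimate, and the splitting read off from the right-inverse property. The only cosmetic difference is that the paper normalizes $\mathscr W$ by the factor $\sqrt{2^k}/\sqrt{k+1}$ coming from its computation of $\int_\sigma\mathscr W(\chi_\sigma)$ on the unit-edge metric simplex, whereas you invoke the duality $\int_\tau W_\sigma=\delta_{\sigma\tau}$ directly; either convention yields a bounded section of $\mathscr I$ and hence the stated decomposition.
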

\begin{proof}
First of all, we should confirm that $\mathscr I$ is in fact a morphism between Sobolev spaces. Indeed, 

$$
\|\mathscr I\omega\|_{C^k_{{p_k},\,{p_{k+1}}}(K)} = \|\mathscr I\omega\|_{C^k_{p_k}(K)} +\|d \mathscr I\omega\|_{C^k_{p_{k+1}}(K)}
$$

\begin{align*}
\|\mathscr I\omega\|_{C^k_{p_k}(K)}^{p_k} = &\sum\limits_{\tau \in K[k]} \left| \int\limits_{\tau}\omega\right|^{p_k} \le
 \sum\limits_{\tau \in K[k]}  \left(\int\limits_{\tau}|\omega|\right)^{p_k} \\
\le\{{\rm mes}\,\tau\}^{{p_k}-1}  \bigg( &\sum\limits_{\tau\in K[k]} \int\limits_{\tau}|\omega|^{p_k}\bigg)
\le \left( \frac{\sqrt{k+1}}{k!\sqrt{2^{k}}} \right)^{{p_k}-1} \|\omega\|^{p_k}_{S\mathscr{L}^k_{p_k}(K)}.
\end{align*}
\begin{align*}
\|d \mathscr I\omega\|_{C^k_{p_{k+1}}(K)}^{p_{k+1}} =\| \mathscr I d\omega\|_{C^k_{p_{k+1}}(K)}^{p_{k+1}} =  &\sum\limits_{\tau \in K[k]} \left| \int\limits_{\tau}d\omega\right|^{p_{k+1}} \le
 \sum\limits_{\tau \in K[k]}  \left(\int\limits_{\tau}|d\omega|\right)^{p_{k+1}} \\
\le\{{\rm mes}\,\tau\}^{{p_{k+1}}-1}  \bigg( &\sum\limits_{\tau\in K[k]} \int\limits_{\tau}|d\omega|^{p_{k+1}}\bigg)
\le \left( \frac{\sqrt{k+1}}{k!\sqrt{2^{k}}} \right)^{{p_{k+1}}-1} \|d\omega\|^{p_{k+1}}_{S\mathscr{L}^{k+1}_{p_{k+1}}(K)}.
\end{align*}

Assume $c \in C^k_{p_{k}}(K)$, let us represent $c$ as a sum in the Banach space which converges absolutely  
$
c = \sum_{\sigma \in K([k])} c(\sigma)\chi_{\sigma}$, where
$$
\chi_{\sigma}(\sigma') = \begin{cases} 1,~\text{if}~\sigma' = \sigma;\\0,~\text{otherwise},\end{cases}
$$ 
that is, $\sum_{\sigma \in K([k])} |c(\sigma)|^{p_{k}} < \infty$.  
In other words, we can define $C^k_{p_{k}}(K)$ as a closure of $\mathbb R$-vector space $C_c^k(K)$ generated by elements $\{ \chi_\sigma\}_{\sigma \in K([k])}$, 
or, equivalently,   a closure of $\mathbb R$-vector space of compactly supported cochains. 

Here we turn to  Hassler Whitney's idea (see \cite{HW}), in order to define an $\mathbb R$-linear map ${\mathscr W}$ as the following. 
Firstly, let us write it down for elements of basis
$$
{\mathscr W}(\chi_{\sigma}) = k!\sum_{i=0}^k (-1)^i t_i dt_0\wedge\dots\wedge dt_{i-1}\wedge dt_{i+1}\wedge \dots \wedge dt_{k},
$$
where $\{t_i\}$ is a set of barycentric coordinates assigned to the vertices of the simplex $\sigma$.    
The ${\rm supp}({\mathscr W}(\chi_{\sigma}))$ contains only one simplex  belonging to the $k$-skeleton of K, namely $\sigma$. 
It implies that any sum ${\mathscr W}(\chi_{\sigma})+{\mathscr W}(\chi_{\sigma'})$ can be uniquely restricted to every $k$-simplex.
Then we can extend our map to the space $C_c^k(K)$ by linearity. It is clear that  ${\mathscr W} (C_c^k(K))$ is a set of  compactly supported differential forms, and moreover, the support of 
every form ${\mathscr W}(\chi_{\sigma})$
contains at most $N$ simplexes due to star-boundedness of the complex.  
Let us check that ${\mathscr W}\colon C^k_c(K) \to S\mathscr{L}^k_{{p_k},\,{p_{k+1}}}(K)$ is a bounded map.  

\begin{align*}
&\|{\mathscr W}(c)\|_{S\mathscr L_{{p_k},\,{p_{k+1}}}(K)} = \Bigg(\sum_{j = 1}^{r}\max_{\sigma_j} |{\mathscr W}(c)|^{p_k}\Bigg)^{\frac{1}{{p_k}}}+  \Bigg(\sum_{j = 1}^{r}\max_{\sigma_j} |d{\mathscr W}(c)|^p\Bigg)^{\frac{1}{{p_{k+1}}}}\end{align*}

Assume that 
$
c = \sum_{i = 1}^{m} c(\tau_i) \chi_{\tau_i}
$
then 
$$
{\mathscr W} c = {\mathscr W} \Bigg(\sum_{i = 1}^{m} c(\tau_i) \chi_{\tau_i}\Bigg) =  \sum_{i = 1}^{m} c(\tau_i)  {\mathscr W} (\chi_{\tau_i})
$$

As mentioned above, there is only a finite number $r$ of simplexes belonging to the support $\bigcup_{i  = 1}^m {\rm supp} ({\mathscr W}(\chi_{\tau_i}))$ of  ${\mathscr W}(c)$.
Applying Jensen's inequality, we have
\begin{align*}
\max_{\sigma} |{\mathscr W}(c)|^{p_k} &=  \max_{\sigma} \Bigg|  \sum_{i = 1}^{m} c(\tau_i)  {\mathscr W} (\chi_{\tau_i})\Bigg|^{p_k}\\
&\le \max_{\sigma} \Bigg\{  \binom{n+1}{k+1}^{{p_k}-1}  \sum_{i = 1}^{m} |c(\tau_i)|^{p_k} |{\mathscr W} (\chi_{\tau_i})|^{p_k} \Bigg\}.
\end{align*}
\begin{align*}
 \Bigg(\sum_{j = 1}^{r}\max_{\sigma_j} |{\mathscr W}(c)|^{p_k}\Bigg)^{\frac{1}{{p_k}}}
& \le \Bigg(\sum_{j = 1}^{r}\max_{\sigma_j} \Big\{  \binom{n+1}{k+1}^{{p_k}-1}  \sum_{i = 1}^{m} |c(\tau_i)|^{p_k} |{\mathscr W} (\chi_{\tau_i})|^{p_k} \Big\}\Bigg)^{\frac{1}{{p_k}}}\\
& \le \Bigg(\sum_{j = 1}^{r}   \binom{n+1}{k+1}^{{p_k}-1}  \sum_{i = 1}^{m} |c(\tau_i)|^{p_k} \max_{\sigma_j}|{\mathscr W} (\chi_{\tau_i})|^{p_k} \Bigg)^{\frac{1}{{p_k}}}\\
& \le \binom{n+1}{k+1}^{\frac{{p_k}-1}{{p_k}}} \Bigg( \sum_{i = 1}^{m}|c(\tau_i)|^{p_k} \sum_{j = 1}^{r} \max_{\sigma_j}|{\mathscr W} (\chi_{\tau_i})|^{p_k}    \Bigg)^{\frac{1}{{p_k}}}
\end{align*}

It is not hard to see that there is a constant $C$ such that
$
\max_{\sigma_j}|{\mathscr W}(\chi_{\tau_i})|^{p_k} \le C$,~ {if $\tau_i$ is a $k$-face of $\sigma_j$},
and
$
|{\mathscr W}(\chi_{\tau_i})|^{p_k} = 0$, {otherwise}.
As a result, we have
\begin{align*}
 \Bigg(\sum_{j = 1}^{r}\max_{\sigma_j} |{\mathscr W}(c)|^{p_k}\Bigg)^{\frac{1}{{p_k}}}
\le  \binom{n+1}{k+1}^{\frac{{p_k}-1}{{p_k}}}  C^{\frac{1}{{p_k}}} N \Bigg( \sum_{i = 1}^{m}|c(\tau_i)|^{p_k} \Bigg)^{\frac{1}{{p_k}}}
\end{align*}

\begin{align*}
 \Bigg(\sum_{j = 1}^{r}\max_{\sigma_j} |d{\mathscr W}(c)|^{p_{k+1}}\Bigg)^{\frac{1}{{p_{k+1}}}} 
 =   \Bigg(\sum_{j = 1}^{r}\max_{\sigma_j} \Bigg|  d\sum_{i = 1}^{m} c(\tau_i)  {\mathscr W} (\chi_{\tau_i})\Bigg|^{p_{k+1}}\Bigg)^{\frac{1}{{p_{k+1}}}}\\
  =   \Bigg(\sum_{j = 1}^{r}\max_{\sigma_j} \Bigg|  \sum_{i = 1}^{m} c(\tau_i)  d{\mathscr W} (\chi_{\tau_i})\Bigg|^{p_{k+1}}\Bigg)^{\frac{1}{{p_{k+1}}}}
\end{align*}

$$
\|dc\|_{C^k_c(K)} = \| \sum_{i = 1}^{m} c(\tau_i) d\chi_{\tau_i} \|_{C^k_c(K)} =\Bigg( N\sum_{i = 1}^{m}|c(\tau_i)|^{p_{k+1}} \Bigg)^{\frac{1}{{p_{k+1}}}}
$$

\begin{align*}
 \Bigg(\sum_{j = 1}^{r}\max_{\sigma_j} |d{\mathscr W}(c)|^{p_{k+1}}\Bigg)^{\frac{1}{{p_{k+1}}}} 
  =   \Bigg(\sum_{j = 1}^{r}\max_{\sigma_j} \Bigg|  \sum_{i = 1}^{m} c(\tau_i)  k\vol \tau_i \Bigg|^{p_{k+1}}\Bigg)^{\frac{1}{{p_{k+1}}}}\\
  \le  \Bigg(\sum_{j = 1}^{r} \max_{\sigma_j} \Big\{  \binom{n+1}{k+1}^{{p_{k+1}}-1}  \sum_{i = 1}^{m} |c(\tau_i)|^{p_{k+1}} |k\vol \tau_i|^{p_{k+1}} \Big\}\Bigg)^{\frac{1}{{p_{k+1}}}}\\
    \le  \Bigg(\sum_{j = 1}^{r}   \binom{n+1}{k+1}^{{p_{k+1}}-1}  \sum_{i = 1}^{m} |c(\tau_i)|^{p_{k+1}}  \max_{\sigma_j}|k\vol \tau_i|^{p_{k+1}} \Bigg)^{\frac{1}{{p_{k+1}}}}\\
     \le  C_1 \binom{n+1}{k+1}^{\frac{{p_{k+1}}-1}{{p_{k+1}}}} \Bigg(\sum_{j = 1}^{r}   \sum_{i = 1}^{m} |c(\tau_i)|^{p_{k+1}}   \Bigg)^{\frac{1}{{p_{k+1}}}}
  \end{align*}
\begin{align*}
 \Bigg(\sum_{j = 1}^{r}\max_{\sigma_j} |d{\mathscr W}(c)|^{p_{k+1}}\Bigg)^{\frac{1}{{p_{k+1}}}} 
  \le   C_1 \binom{n+1}{k+1}^{\frac{{p_{k+1}}-1}{{p_{k+1}}}} \Bigg( N \sum_{i = 1}^{m} |c(\tau_i)|^{p_{k+1}}   \Bigg)^{\frac{1}{{p_{k+1}}}}
 \end{align*}
As a result we have
\begin{align*}
 \Bigg(\sum_{j = 1}^{r}\max_{\sigma_j} |{\mathscr W}(c)|^{p_k}\Bigg)^{\frac{1}{{p_k}}}
\le C^{\frac{1}{{p_k}}} \binom{n+1}{k+1}^{\frac{{p_k}-1}{{p_k}}}   N \|c\|_{C^k_c(K)} 
\end{align*}
and
\begin{align*}
 \Bigg(\sum_{j = 1}^{r}\max_{\sigma_j} |d{\mathscr W}(c)|^{p_{k+1}}\Bigg)^{\frac{1}{{p_{k+1}}}} 
  \le   C_1 \binom{n+1}{k+1}^{\frac{{p_{k+1}}-1}{{p_{k+1}}}} \|dc\|_{C^{k+1}_c(K)} 
 \end{align*}

\begin{align*}
&\|{\mathscr W}(c)\|_{S\mathscr L_{{p_k},\,{p_{k+1}}}(K)} = \Bigg(\sum_{j = 1}^{r}\max_{\sigma_j} |{\mathscr W}(c)|^{p_k}\Bigg)^{\frac{1}{{p_k}}}+  \Bigg(\sum_{j = 1}^{r}\max_{\sigma_j} |d{\mathscr W}(c)|^{p_{k+1}}\Bigg)^{\frac{1}{{p_{k+1}}}}\\
&\le \tilde{C} \bigg( \|c\|_{C^k_{p_k}(K)} +\|d c\|_{C^{k+1}_{p_{k+1}}(K)}\bigg) = \tilde{C}  \|c\|_{C^k_{{p_k},\,{p_{k+1}}}(K)} 
\end{align*}

Hence the map 
$
{\mathscr W}\colon C^k_c(K) \to S\mathscr{L}^k_{p_k,\,p_{k+1}}(K)
$
enjoys the following property \linebreak
$
\|{\mathscr W}(c)\|_{S\mathscr{L}_{{p_k},\,{p_{k+1}}}(K)} \le  \tilde{C}  \|c\|_{C^k_{{p_k},\,{p_{k+1}}}(K)} .
$
Now we can extend ${\mathscr W}$ to the closure 
$$\overline{\big(C^k_c(K)\big)}_{\|\|_{C^k_{{p_k},\,{p_{k+1}}}}} = C^k_{{p_k},\,{p_{k+1}}}(K)$$ 
by 
$$
{\mathscr W}(c) = \sum_{\sigma \in K([k])} c(\sigma){\mathscr W}(\chi_{\sigma}) = \lim_{i\to \infty} {\mathscr W}(c_i),$$ 
${as}~c_i \xrightarrow[i\to \infty]{~} c,~\{c_i\} \subset C^k_c(K),$
and  it follows that
$
{\mathscr W} \colon C^k_{{p_k},\,{p_{k+1}}}(K) \to S\mathscr{L}^k_{{p_k},\,{p_{k+1}}}(K).
$

It remains to verify that  $\mathscr I$ is a retraction. First, let $\sigma \in K[k]$  it is clear that \linebreak
${{\rm supp} ({\mathscr W }(\chi_\sigma))\cap K[k] = \{\sigma\}}$ and so
$
 \langle \mathscr I \circ {\mathscr W}(\chi_\sigma) \rangle (\sigma')= 0,~\text{if}~\sigma' \ne \sigma. 
$
Let 
$
{\mathscr W}(\chi_{\sigma}) = k!\sum_{i=0}^k (-1)^i t_i dt_0\wedge\dots\wedge dt_{i-1}\wedge dt_{i+1}\wedge \dots \wedge dt_{k},
$
so we have $t_0 = 1- \sum_{i=1}^k  t_i$ inside $\sigma$. As a result, we can write 
\begin{align*}
\frac{1}{k!} &\int_{\sigma} {\mathscr W}(\chi_{\sigma}) = \int_{\sigma}(1- \sum_{i=1}^k  t_i) dt_1\wedge\dots \wedge dt_{k}  \\
+&\int_{\sigma}\sum_{i=1}^k (-1)^i t_i d (1- \sum_{j=1}^k  t_j) \wedge\dots\wedge dt_{i-1}\wedge dt_{i+1}\wedge \dots \wedge dt_{k} 
\end{align*}
\begin{align*}
=  &\int_{\sigma}(1- \sum_{i=1}^k  t_i) dt_1\wedge\dots \wedge dt_{k}\\
+ &\int_{\sigma}\sum_{i=1}^k (-1)^{i+1} t_i d (\sum_{j=1}^k  t_j) \wedge\dots\wedge dt_{i-1}\wedge dt_{i+1}\wedge \dots \wedge dt_{k} \\
=  &\int_{\sigma}(1- \sum_{i=1}^k  t_i) dt_1\wedge\dots \wedge dt_{k}\\
+&\int_{\sigma}\sum_{i=1}^k (-1)^{i+1} t_i dt_i \wedge\dots\wedge dt_{i-1}\wedge dt_{i+1}\wedge \dots \wedge dt_{k} \\
=&\int_{\sigma}dt_1\wedge\dots \wedge dt_{k} =  \frac{\sqrt{k+1}}{k! \sqrt{2^{k}}}
\end{align*}
since 
$$dt_i \wedge\dots\wedge dt_{i-1}\wedge dt_{i+1}\wedge \dots \wedge dt_{k} = (-1)^{i-1}dt_1 \wedge\dots\wedge dt_{i-1}\wedge dt_i \wedge dt_{i+1}\wedge \dots \wedge dt_{k}.$$

Hence we have
$
 \mathscr I \circ {\mathscr W}(\chi_\sigma) = \frac{\sqrt{k+1}}{\sqrt{2^{k}}} \chi_\sigma
$
and so for $c \in C^k_{p_{k+1}}(K)$ the following holds
$
 \mathscr I \circ {\mathscr W}(c) = \frac{\sqrt{k+1}}{\sqrt{2^{k}}} c.
$
Now we can denote 
$
\tilde {\mathscr W} = \frac{\sqrt{2^{k}}}{\sqrt{k+1}} {\mathscr W},
$
that implies that the following diagram commutes
$$
\xymatrix@C =2.5em @R = 4.5em{C^k_{p_k,\,p_{k+1}}(K)\ar[dr]_{{\rm Id}_{C^k_{p_k,\,p_{k+1}}(K)}}\ar[r]^{\tilde{\mathscr W}}&~~S\mathscr{L}^k_{p_k,\,p_{k+1}}(K)\ar[d]^{\mathscr I}\\
&C^k_{p_k,\,p_{k+1}}(K)}
$$
and $\mathscr I$ is a retraction of morphism $\tilde{\mathscr W}$.
\end{proof}
{\centering
\section{On Vanishing of $\ker_\pi \mathscr I$}
}
\begin{lemma}\label{BtoI}
Let ${\Delta}$ be a simplex and $\partial \Delta$ be its boundary.  Assume that $p_k \ge p_{k+1}$.
Then any  
$ \omega \in \bigwedge^k \Omega_{\mathscr L}(\partial \Delta)$ can be extended to the whole $\Delta$ in such a way that \linebreak
$ \tilde{\omega} \in \bigwedge^k \Omega_{\mathscr L}(\Delta)$ and $\|\tilde{\omega}\|_{\Omega^{*}_{\pi}(\Delta)} \le C\|\omega\|_{\Omega^{*}_{\pi}(\partial \Delta)}$. 
\end{lemma}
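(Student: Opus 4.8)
The plan is to realize the extension by a \emph{coning} construction combined with a radial cut-off that suppresses the singularity at the cone point. Write $\Delta$ as the cone over $\partial\Delta$ with apex the barycenter $c$, and parametrize $\Delta\setminus\{c\}$ by $(t,y)\mapsto c+t(y-c)$, with $t\in(0,1]$ the radial coordinate and $y\in\partial\Delta$ the direction; let $\pi_\partial\colon\Delta\setminus\{c\}\to\partial\Delta$, $(t,y)\mapsto y$, be the radial projection. Fix a smooth cut-off $\chi\colon[0,1]\to[0,1]$ with $\chi\equiv 1$ near $t=1$ and $\chi\equiv 0$ on $[0,\tfrac12]$, and set
$$
\tilde\omega := \chi(t)\,\pi_\partial^{*}\omega,
$$
extended by $0$ across $c$. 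Then $\tilde\omega|_{\partial\Delta}=\omega$, so $\tilde\omega$ is genuinely an extension, and because $\chi$ vanishes near the apex the only possible singularity is killed. Since $\tilde\omega$ is assembled from $\chi$ and the radial pullback of differentials of Lipschitz functions, it lies in $\bigwedge^k\Omega_{\mathscr L}(\Delta)$; here one checks that $\pi_\partial$ is Lipschitz and compatible with the subdivision of $\Delta$ induced by coning, so that $\pi_\partial^{*}\omega$ is again smooth on each simplex of that subdivision. This is the routine part.

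For the estimate, record the scaling in the cone coordinates: the volume element satisfies $dV_\Delta\le C\,t^{n-1}\,dt\,dV_{\partial\Delta}$, while the pointwise norm obeys $|\pi_\partial^{*}\omega|(t,y)\le C\,t^{-k}|\omega|(y)$ for a $k$-form and $|\pi_\partial^{*}d\omega|(t,y)\le C\,t^{-(k+1)}|d\omega|(y)$ for its differential. Since the support of $\chi$ lies in $t\in[\tfrac12,1]$, every power of $t$ is bounded there, so each integral over $\Delta$ reduces, up to an absolute constant, to the corresponding integral over $\partial\Delta$. This immediately yields $\|\tilde\omega\|_{\Omega^{k}_{p_k}(\Delta)}\le C\|\omega\|_{\Omega^{k}_{p_k}(\partial\Delta)}$. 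The differential now splits as
$$
d\tilde\omega = \chi'(t)\,dt\wedge\pi_\partial^{*}\omega + \chi(t)\,\pi_\partial^{*}d\omega,
$$
and the second summand is handled exactly as above, contributing a bound by $C\|d\omega\|_{\Omega^{k+1}_{p_{k+1}}(\partial\Delta)}$.

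The first summand is the crux. It involves $\omega$ itself, yet it is measured in the $L^{p_{k+1}}$ scale, so the same reduction produces $C\|\omega\|_{\Omega^{k}_{p_{k+1}}(\partial\Delta)}$ rather than the $p_k$-norm supplied by the hypothesis. This is exactly where the monotonicity assumption $p_k\ge p_{k+1}$ is forced upon us: since $\partial\Delta$ carries finite measure (uniformly bounded under $L=1$), Hölder's inequality gives
$$
\|\omega\|_{\Omega^{k}_{p_{k+1}}(\partial\Delta)} \le ({\rm mes}\,\partial\Delta)^{\frac{1}{p_{k+1}}-\frac{1}{p_k}}\,\|\omega\|_{\Omega^{k}_{p_k}(\partial\Delta)},
$$
with a non-negative exponent precisely when $p_k\ge p_{k+1}$. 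Combining the three bounds gives $\|d\tilde\omega\|_{\Omega^{k+1}_{p_{k+1}}(\Delta)}\le C\|\omega\|_{\Omega^{*}_{\pi}(\partial\Delta)}$, and adding the $L^{p_k}$-estimate for $\tilde\omega$ yields the claim. The main obstacle is therefore not the geometry of the cone but the bookkeeping of integrability exponents: the derivative of the cut-off unavoidably transfers the $k$-form $\omega$ into the $(k+1)$-st integrability scale, and only the hypothesis $p_k\ge p_{k+1}$ permits reabsorbing it into the available norm; the secondary technical point is confirming that $\tilde\omega$ remains in the Lipschitz class $\bigwedge^k\Omega_{\mathscr L}(\Delta)$.
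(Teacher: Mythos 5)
Your proposal is correct and follows essentially the same route as the paper: the paper also extends by multiplying the (transversally constant) pullback of $\omega$ by a scalar function of the collar parameter (it uses $f(t)=1-t$ on $\partial I^n\times I$ transported into $\Delta$ by bi-Lipschitz maps, where you use a cut-off on the intrinsic cone coordinate), and it isolates exactly the same crux, namely the cross term $dt\wedge\omega$ landing in the $L^{p_{k+1}}$ scale, which is reabsorbed via H\"older's inequality on a finite-measure domain using $p_k\ge p_{k+1}$. The only differences are cosmetic (cube collar versus radial cone projection), so the two arguments are interchangeable.
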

\begin{proof}
Let $I = [0,\,1]$. Consider a Lipschitz  form  $ \omega \in \bigwedge^k \Omega_{\mathscr L}(\partial I^n)$. 
Our aim is to define $ \tilde{\omega} \in \bigwedge^k \Omega_{\mathscr L}(\partial I^n\times I)$ in such a manner that 
$
\tilde{\omega}\big|_{\partial I^n} = \omega
$,
{and} $\|\tilde{\omega}\|_{\Omega^{*}_{\pi}(\partial I^n\times I)} \le C \|\omega\|_{\Omega^{*}_{\pi}(\partial I^n)}$.

Define a function $f\colon I \to \mathbb{R}$ as the following 
$
t \mapsto 1-t.
$
Then we can define $\tilde{\omega}$ as the following
$
\tilde{\omega}(x,\,t) = f(t)\omega(x)
$
\begin{align*}
\|\tilde{\omega}\|^{p_k}_{\Omega^{*}_{p_k}(\partial I^n\times I)} = \int\limits_{\partial I^n\times I} |\tilde{\omega}|^{p_k} dxdt
 =  \int\limits_{I} dt\int\limits_{\partial I^n} |f(t)|^{p_k}|{\omega(x)}|^{p_k} dx \\
 = \int\limits_{0}^1 |1-t|^{p_k}dt  \int\limits_{\partial I^n} |{\omega(x)}|^{p_k} dx \le  \frac{1}{(1+p_k)^{p_k}}  \|\omega\|^{p_k}_{\Omega^{*}_{p_k}(\partial I^n)} 
 \end{align*}
$$
d\tilde{\omega} = df\wedge \omega + fd\omega = (1-t)d\omega(x) - dt\wedge\omega(x)
$$
$$
|d\tilde{\omega}| \le |1-t| |d\omega(x)| + |dt\wedge \omega(x)|
$$
$$
\|d\tilde{\omega}\|_{\Omega^{*}_{p_{k+1}}(\partial I^n\times I)} \le \|dt\wedge\omega\|_{\Omega^{*}_{p_k}(\partial I^n\times I)} + \|(1-t)d\omega\|_{\Omega^{*}_{p_{k+1}}(\partial I^n\times I)} 
$$
$$
\|(1-t)d{\omega}\|^{p_{k+1}}_{\Omega^{*}_{p_{k+1}}(\partial I^n\times I)} =
 \int\limits_{I} dt\int\limits_{\partial I^n} |1-t|^{p_{k+1}}|{d\omega(x)}|^{p_{k+1}} dx = \frac{1}{(1+p_{k+1})^{p_{k+1}}} \|d\omega\|^{p_{k+1}}_{\Omega^{*}_{p_{k+1}}(\partial I^n)} 
$$

Assume that $U$ has a finite measure and  $p_k \ge p_{k+1}$, then we can apply the H\"{o}lder's inequality
\begin{align*}
\|g\|^{p_{k+1}}_{L_{p_{k+1}}} = \|1\cdot g^{p_{k+1}}\|_{L_1}\le\|1\|_{L_{\frac{p_k}{p_k-p_{k+1}}}} \|g^{p_{k+1}}\|_{L_{\frac{p_k}{p_{k+1}}}}\\
=\Big\{ \int \limits_{U} 1 dx  \Big\}^{\frac{p_k-p_{k+1}}{p_k}} \|g\|^{p_{k+1}}_{L_{p_k}},
\end{align*}
$$
\|g\|_{L_{p_{k+1}}} \le \Big\{ \int \limits_{U} 1 dx  \Big\}^{\frac{1}{p_{k+1}}-\frac{1}{p_k}} \|g\|_{L_{p_k}}.
$$
Consequently, 
$$
\|df \wedge \omega\|_{\Omega_{p_{k+1}}(\partial I^n\times I)} = \|\omega\|_{\Omega_{p_{k+1}}(\partial I^n\times I)} 
\le \Big\{ \int \limits_{\partial I^n\times I} 1 dx  \Big\}^{\frac{1}{p_{k+1}} -\frac{1}{p_k}} \|\omega\|_{\Omega_{p_{k}}(\partial I^n \times I)}.
$$
As a result, we have 
$$
\|f\omega\|_{\Omega^{*}_{p_k}(\partial I^n \times I)}  \le \frac{1}{(1+p_k)}  \|\omega\|_{\Omega^{*}_{p_k}(\partial I^n)}, 
$$
$$
\|f d\omega\|_{\Omega_{p_{k+1}}(\partial I^n \times I)} \le \frac{1}{( 1+p_{k+1} ) } \|d\omega\|_{\Omega^{*}_{p_{k+1}}(\partial I^n)}
$$
and 
$$
\|df \wedge \omega\|_{\Omega_{p_{k+1}}(\partial I^n\times I)} 
\le  \|\omega\|_{\Omega_{p_{k}}(\partial I^n)}.
$$
$$
\|d\tilde{\omega}\|_{\Omega^{*}_{p_{k+1}}(\partial I^n\times I)} \le \|\omega\|_{\Omega^{*}_{p_{k}}(\partial I^n)}+ \|d\omega\|_{\Omega^{*}_{p_{k+1}}(\partial I^n)}
$$
\begin{align*}
\|\tilde{\omega}\|_{\Omega^{*}_{\pi}(\partial I^n\times I)} &=\|\tilde{\omega}\|_{\Omega^{*}_{p_k}(\partial I^n\times I)} + \|d\tilde{\omega}\|_{\Omega^{*}_{p_{k+1}}(\partial I^n\times I)} \\
\le&  \|\omega\|_{\Omega^{*}_{p_k}(\partial I^n)} + \|\omega\|_{\Omega^{*}_{p_{k}}(\partial I^n)}+ \|d\omega\|_{\Omega^{*}_{p_{k+1}}(\partial I^n)} 
\le 2\|\omega\|_{\Omega^{*}_{\pi}(\partial I^n)}
\end{align*}

Assume that $\Delta$ is an $n$-dimensional simplex and $ \omega \in \bigwedge^k \Omega_{\mathscr L}(\partial \Delta)$ is a Lipschitz form.

\begin{center}
\begin{tikzpicture}
\begin{scope}[scale = 0.5]
\draw[thick] (-2.5,0)--(2.5,0)--(0,4)--(-2.5,0);
\draw (-1,1)--(1,1)--(0,2.5)--(-1,1);
\draw (0,-0.5) node[font = \fontsize{8}{30}]{$\partial \Delta$};
\node[font = \fontsize{8}{30}, rotate = 57] at (-1,1.6) {$\phi_\xi(\Delta)$};
\draw[-stealth] (0,0.1) -- (0,0.9); 
\draw (-0.3, 0.5) node[font = \fontsize{8}{30}]  {$\phi_t$};
\end{scope}
\end{tikzpicture}
\end{center}

There exist  Lipschitz map $\gamma \colon \partial I^n \to  \partial \Delta $.  Indeed, there is a circumscribed sphere $S$ for the cube $\partial I^n$. 
We can use the projection $\pi_1\colon \partial I^n \to S$  along the radial direction as a bi-Lipschitz homeomorphism between $\partial I^n$ and $S$.
Then we can define a bi-Lipschitz homeomorphism between the inscribed sphere $S$ and  $\partial \Delta$  by means of  the projection  $\pi_2\colon S \to \partial \Delta$.

\begin{center}
\begin{tikzpicture}
\begin{scope}[scale = 3.8]
\draw[thick](-1/2,-1.732/6)--(1/2,-1.732/6)--(0,1.732/3)--(-1/2,-1.732/6);
\draw[thick](-2.45/12,-2.45/12)--(2.45/12,-2.45/12)--(2.45/12,2.45/12)--(-2.45/12,2.45/12)--(-2.45/12,-2.45/12);
\draw (0,0) circle(1.732/6);
\draw[-stealth] (0,0) -- (0,1.732/3); 
\draw[-stealth] (0,0) -- (0,1.732/3.5); 
\draw[-stealth] (0,0) -- (-1.732/6,0); 
\draw (0.1,-0.15) node[font = \fontsize{8}{30}]{$\partial I^n$};
\draw (-0.29,-0.15) node[font = \fontsize{8}{30}]{$S$};
\draw (0.2,-1.732/5.2) node[font = \fontsize{8}{30}]{$\partial \Delta$};
\draw (-0.07,-0.05) node[font = \fontsize{8}{30}]{$\pi_1$};
\draw (0.05,1.732/4.3) node[font = \fontsize{8}{30}]{$\pi_2$};
\end{scope}
\end{tikzpicture}
\end{center}

Hence, we can put $\gamma = \pi_2\pi_1$ up to the proper homothetic transformations.

Finally, we should turn to a couple of Lipschitz maps $g$, $h$ as illustrated below 

\begin{center}
\begin{tikzpicture}[scale = 0.6]
\begin{scope}[xshift = -150, scale = 0.4]
\draw[thick] (-2.5,0)--(2.5,0)--(2.5,5)--(-2.5,5)--(-2.5,0);
\draw[dashed] (0,6)--(0,2)--(4,2);
\draw[thick] (4,2)--(4,6);
\draw (-0.3,-0.5) node[font = \fontsize{8}{30}]{$\partial I^n$};
\draw (-3,2.5) node[font = \fontsize{8}{30}]{$I$};

\draw[thick] (-2.5,5)--(0,6)--(4,6)--(2.5,5);
\draw[dashed] (-2.5,0)--(0,2)--(4,2)--(2.5,0);
\draw[thick] (4,2)--(2.5,0);
\end{scope}

\draw[-stealth] (-3.5,1.5) -- (-1.3,1.5);
\draw (-2.4,1.3) node[font = \fontsize{8}{30}]{$g$};
 
\draw[-stealth] (-5,-0.2) -- (-3.5, -1.5);
\draw (-4,-0.7) node[font = \fontsize{8}{30}]{$hg$};

\begin{scope}[scale = 0.45]
\draw[thick] (-2.5,0)--(2.5,0)--(2.5,5)--(-2.5,5)--(-2.5,0);
\draw (-1,1.5)--(1,1.5)--(1,3.5)--(-1,3.5)--(-1,1.5);
\draw (0.3,-0.5) node[font = \fontsize{8}{30}]{$\partial I^n$};
\end{scope}
\draw[-stealth] (-0.3,-0.2) -- (-1.8, -1.5);
\draw (-1.3,-0.7) node[font = \fontsize{8}{30}]{$h$};

\begin{scope}[xshift = -75, yshift = -80, scale = 0.6]
\draw[thick] (-2.5,0)--(2.5,0)--(0,4)--(-2.5,0);
\draw (-1,1)--(1,1)--(0,2.5)--(-1,1);
\draw (0,-0.5) node[font = \fontsize{8}{30}]{$\partial \Delta$};
\draw[-stealth] (0,0.1) -- (0,0.9); 
\end{scope}

\end{tikzpicture}

\end{center}
We can extend $\gamma^*\omega$ as was shown above. And then put the following $\tilde{\omega} = (g^{-1}h^{-1})^* \tilde{\gamma^*\omega}$. Hence we have 
$\tilde{\omega}\big|_{\phi_{\xi}(\partial \Delta)} = 0$ and we can consider $\tilde{\omega}$ to be zero over $\phi_{\xi}(\Delta)$.
\end{proof}

\begin{lemma}
Let $K$ be an $n$-dimensional simplicial complex of bounded geometry and $K[m]$ be its $m$-dimensional skeleton.   
Then any  $ \omega \in \Omega^{*}_{\pi}(K[m])$ can be extended to the whole $K$ in such a way that
$ \tilde{\omega} \in \Omega^{*}_{\pi}(K)$ and 
$\|\tilde{\omega}\|_{\Omega^{*}_{\pi}(K)} \le C\|\omega\|_{\Omega^{*}_{\pi}(K[m])}.$
\end{lemma}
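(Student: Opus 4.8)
The plan is to build the extension one dimension at a time along the skeletal filtration $K[m]\subset K[m+1]\subset\dots\subset K[n]=K$, promoting the single-simplex statement of \emph{[{\bf Lemma \ref{BtoI}}]} to a global one. At the generic step I would assume the form has already been defined on $K[j]$ (starting from the given $\omega$ on $K[m]$) and extend it across every $(j+1)$-simplex $\Delta$ separately: since $\partial\Delta\subset K[j]$, \emph{[{\bf Lemma \ref{BtoI}}]} furnishes $\tilde{\omega}_{\Delta}\in\bigwedge^{*}\Omega_{\mathscr L}(\Delta)$ with $\tilde{\omega}_{\Delta}\big|_{\partial\Delta}=\omega$ and $\|\tilde{\omega}_{\Delta}\|_{\Omega^{*}_{\pi}(\Delta)}\le C\|\omega\|_{\Omega^{*}_{\pi}(\partial\Delta)}$. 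Iterating from $j=m$ up to $j=n-1$ produces the desired $\tilde{\omega}$ on all of $K$.

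First I would check that the local pieces glue. Two $(j+1)$-simplices $\Delta,\Delta'$ meet along a common $j$-face $\tau\subset K[j]$, and by construction both $\tilde{\omega}_{\Delta}$ and $\tilde{\omega}_{\Delta'}$ restrict to the already-defined form on $\tau$; hence they agree on overlaps and assemble into a single Lipschitz form on $K[j+1]$ whose restriction to $K[j]$ is the previous one. Because we assume bounded geometry with $L=1$, every simplex is congruent to the standard one, so the constant $C$ produced by \emph{[{\bf Lemma \ref{BtoI}}]} is uniform over all $\Delta$; star-boundedness guarantees that each $j$-face $\tau$ lies in at most a fixed number of $(j+1)$-simplices, which is precisely what allows the local estimates to be summed into a global one.

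The step that needs care --- and the main obstacle --- is the norm bookkeeping, since the $\pi$-norm measures the form in $L_{p_k}$ but its differential in $L_{p_{k+1}}$. Splitting $\|\tilde{\omega}\|_{\Omega^{*}_{\pi}(K[j+1])}$ into its two components, the $L_{p_k}$-part sums cleanly: adding the local bounds $\|\tilde{\omega}_{\Delta}\|_{\Omega^{*}_{p_k}(\Delta)}\le C\|\omega\|_{\Omega^{*}_{p_k}(\partial\Delta)}$ over the $(j+1)$-simplices and invoking bounded valence returns a uniform multiple of $\|\omega\|_{\Omega^{*}_{p_k}(K[j])}$. The delicate term is the $L_{p_{k+1}}$-norm of $d\tilde{\omega}$: the construction in \emph{[{\bf Lemma \ref{BtoI}}]} produces, beside the harmless piece controlled by $\|d\omega\|_{\Omega^{*}_{p_{k+1}}(\partial\Delta)}$, a contribution (morally $df\wedge\omega$) whose $L_{p_{k+1}}$-size is dictated by the boundary \emph{values} of $\omega$, which the $\pi$-norm only controls in $L_{p_k}$. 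Reconciling these $L_{p_k}$ boundary data with an $\ell^{p_{k+1}}$-summation over simplices is exactly the subtle point: it is where the monotonicity hypothesis $p_k\ge p_{k+1}$ of \emph{[{\bf Lemma \ref{BtoI}}]}, the gap condition $\tfrac{1}{p_{k+1}}-\tfrac{1}{p_k}\le\tfrac{1}{n}$, and bounded geometry must be used in concert. Concretely, $L=1$ fixes the volume ${\rm mes}\,\Delta$ of every simplex, so the simplexwise H\"older inequality $\|\cdot\|_{\Omega^{*}_{p_{k+1}}(\Delta)}\le({\rm mes}\,\Delta)^{\frac{1}{p_{k+1}}-\frac{1}{p_k}}\|\cdot\|_{\Omega^{*}_{p_k}(\Delta)}$ carries a uniform constant, and the interplay of the exponents through the $\ell^{p}$-summation is the one place where I expect the argument to be genuinely technical rather than routine.

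Finally I would record that the whole construction is performed on Lipschitz forms, where \emph{[{\bf Lemma \ref{BtoI}}]} applies, and that the resulting single-step estimate $\|\tilde{\omega}\|_{\Omega^{*}_{\pi}(K[j+1])}\le C_j\|\omega\|_{\Omega^{*}_{\pi}(K[j])}$, with $C_j$ independent of the form, lets the extension operator pass to the closure $\Omega^{*}_{\pi}$. Composing the finitely many single-step bounds for $m\le j\le n-1$ yields $\|\tilde{\omega}\|_{\Omega^{*}_{\pi}(K)}\le C\|\omega\|_{\Omega^{*}_{\pi}(K[m])}$ with $C=\prod_{j=m}^{n-1}C_j$, which is finite because the dimension $n$ is finite. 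The point I would watch most carefully throughout is the uniformity of each $C_j$ in the number of simplices: it must come entirely from $L=1$ and star-boundedness, and never from any global finiteness of $K$.
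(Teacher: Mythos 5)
Your proposal is correct and follows essentially the same route as the paper: extend skeleton by skeleton using Lemma~\ref{BtoI} on each simplex, use bounded geometry and star-boundedness to make the local estimates sum to a uniform global bound, and pass from Lipschitz forms to the closure $\Omega^{*}_{\pi}$ via the linearity of the extension. The paper's own proof is terser --- it compresses the simplexwise summation into a single appeal to the previous lemma and instead spells out the density/Cauchy-sequence argument for the limit --- but the underlying strategy is identical to yours.
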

\begin{proof}
Suppose that $ \omega \in \Omega^{*}_{\pi}(K[m])$ and there is $\{\omega_i\} \subset  \bigwedge^k \Omega_{\mathscr L}(K[m])$ such that \linebreak
${\|\omega_i - \omega\|_{ \Omega^{*}_{\pi}(K[m])} \to 0}$ as $i \to \infty$.  

In the light of the previous lemma, there exists $\tilde{\omega_i} \in \Omega^{*}_{\pi}(K[m+1])$ which satisfies the following estimation 
$
\|\tilde{\omega_i}\|_{ \Omega^{*}_{\pi}(K[m+1])} \le C \|\omega_i\|_{ \Omega^{*}_{\pi}(K[m])}
$
for each $i$. It is not hard to see that $\{\tilde{\omega_i}\}$ is a Cauchy sequence since the procedure of extension is linear:
$
\|\tilde{\omega_i} - \tilde{\omega_j}\|_{ \Omega^{*}_{\pi}(K[m+1])} \to 0,~i,\,j \to \infty.
$
And so
$
\lim \|\tilde{\omega_i}\|_{ \Omega^{*}_{\pi}(K[m+1])} \le \lim\|\omega_i\|_{ \Omega^{*}_{\pi}(K[m])} = \|\omega\|_{ \Omega^{*}_{\pi}(K[m])}.
$
Denote a  limit of the sequence as the following
$
\lim\tilde{\omega_i} = \tilde{\omega}.
$
Repeating this construction for every dimension, as a result, we obtain an extension to the  whole complex.
\end{proof}
\begin{remark}
The following proposition shows that for  a non-increasing sequence $\pi$ the cohomology of $\ker \mathscr I$ is zero.
\end{remark}
\begin{proposition}\label{TrivKer}
In the following diagram in ${\sf Vec}_{\mathbb R}$
\begin{center}
$$
\xymatrix@C = 2.5em @R = 4em{
{\rm im}(d^{k})\arrow[r] \arrow@/{}^{2pc}/[rr]^0&S\mathscr{L}^{k+1}_{\pi}(K)\arrow[r]^{d^{k+1}}& S\mathscr{L}^{k+2}_{\pi}(K)\\
&{\rm ker}(\mathscr I)\arrow[u]\arrow@{-->}[lu].&
}
$$
\end{center}
the map $\ker(\mathscr I)$ factors through  ${\rm im}(d^k)$.
\end{proposition}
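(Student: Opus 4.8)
The plan is to read the displayed diagram as the assertion that every cocycle of the subcomplex $\ker_\pi\mathscr I$ sitting in degree $k+1$ --- that is, every $\omega\in S\mathscr L^{k+1}_\pi(K)$ with $d^{k+1}\omega=0$ and $\mathscr I\omega=0$ --- already lies in $\im(d^k)$. Since $\im(d^k)\hookrightarrow S\mathscr L^{k+1}_\pi(K)$ is a monomorphism in $\mathsf{Vec}_{\mathbb R}$ and the composite into $S\mathscr L^{k+2}_\pi(K)$ vanishes, producing the dashed arrow amounts exactly to exhibiting, for such an $\omega$, a primitive $\theta\in S\mathscr L^k_\pi(K)$ with $d^k\theta=\omega$. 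I would construct $\theta$ by induction over the skeleta $K[m]$, $m=k+1,\dots,n$, gluing local primitives that vanish on all lower skeleta, and then read off that $\theta$ automatically lies in $\ker\mathscr I$.

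First I would treat the base skeleton $K[k+1]$. On each $(k+1)$-simplex $\sigma$ the form $\omega|_\sigma$ is a closed top-degree form whose integral $\int_\sigma\omega=(\mathscr I\omega)(\sigma)$ vanishes by hypothesis; the relative Poincar\'e lemma for top forms then yields a primitive $\psi_\sigma$ with $d\psi_\sigma=\omega|_\sigma$ and $\psi_\sigma|_{\partial\sigma}=0$. Because every $\psi_\sigma$ vanishes on $\partial\sigma\subset K[k]$, the pieces agree on shared faces and glue to a single $\theta\in S\mathscr L^k_\pi(K[k+1])$ with $d\theta=\omega|_{K[k+1]}$ and, crucially, $\theta|_{K[k]}=0$. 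For the inductive step $K[m]\to K[m+1]$ with $m\ge k+1$ I would, on each $(m+1)$-simplex $\Delta$, first extend the already-defined boundary value $\theta|_{\partial\Delta}$ to a form $E$ on $\Delta$ by Lemma~\ref{BtoI}, and then correct it. The form $\omega|_\Delta-dE$ is closed and restricts to $0$ on $\partial\Delta$; since $k+1<\dim\Delta$, the relative cohomology $H^{k+1}(\Delta,\partial\Delta)\cong H_{\dim\Delta-(k+1)}(\Delta)$ vanishes, so there is a $\psi$ with $d\psi=\omega|_\Delta-dE$ and $\psi|_{\partial\Delta}=0$. Setting $\theta|_\Delta=E+\psi$ keeps the boundary value unchanged (hence the gluing stays consistent), gives $d\theta=\omega$ on $\Delta$, and preserves $\theta|_{K[k]}=0$. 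Iterating up to $K[n]=K$ produces the desired global primitive.

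The analytic heart --- and the step I expect to be the main obstacle --- is the norm bookkeeping that keeps $\theta$ inside $S\mathscr L^k_\pi(K)$ with $\|\theta\|\le C\|\omega\|$. Each extension is controlled by Lemma~\ref{BtoI}, which is exactly where the monotonicity $p_{j}\ge p_{j+1}$ is consumed, through the H\"older inequality comparing $L_{p_{j+1}}$ with $L_{p_j}$ on finite-measure simplices; the relative primitives $\psi$ must in turn be produced by a cone/homotopy operator whose boundedness in the $\Omega_\pi$-norm rests on the same monotonicity. Bounded geometry (the uniform star bound $N$ and $L=1$) then lets me sum the local estimates over all simplices without losing control, as in the preceding skeleton-extension lemma.

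Finally, since the construction forces $\theta|_{K[k]}=0$, every $k$-simplex integral $\int_\tau\theta$ vanishes, so $\mathscr I\theta=0$ and $\theta\in\ker^k_\pi\mathscr I$; this both yields the factorization of the diagram through $\im(d^k)$ and, as a bonus, shows the primitive can be taken inside $\ker\mathscr I$, which is what the subsequent maps $\eta^i$ require. Should the direct gluing prove awkward to keep consistent, one alternative I would keep in reserve is to produce an arbitrary primitive $\theta$ first and then subtract the Whitney correction $\tilde{\mathscr W}(\mathscr I\theta)$, using that $\tilde{\mathscr W}$ is a bounded chain section of $\mathscr I$ (Lemma~\ref{Split}) and that $\mathscr I\theta$ is a cocycle because $\partial\mathscr I\theta=\mathscr I\,d\theta=\mathscr I\omega=0$, to land back in $\ker\mathscr I$ without disturbing $d\theta=\omega$.
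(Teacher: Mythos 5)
Your proposal is correct and follows essentially the same route as the paper: a skeleton-by-skeleton induction in which the zero-integral condition from $\mathscr I\omega=0$ feeds the relative Poincar\'e lemma on each $(k+1)$-simplex, the resulting local primitives vanishing on boundaries are glued, and the extension to higher skeleta is handled by Lemma~\ref{BtoI} together with the vanishing of $H^{k+1}(\Delta,\partial\Delta)$ in higher-dimensional simplices, with uniform bounds supplied by the open-mapping/inverse-operator theorem and bounded geometry. You also correctly locate where the monotonicity of $\pi$ is consumed (the $L_{p_{k+1}}$-versus-$L_{p_k}$ comparison, which the paper phrases as $\ell^{p_{k+1}}\hookrightarrow\ell^{p_k}$), so no further comparison is needed.
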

\begin{proof}
Let $B\subset \mathbb{R}^{k+1}$ be a $k+1$-dimensional ball.  
$$
H^i_{\text{\fontsize{5}{6}\selectfont DeRh}}(B,\,\partial B) = \begin{cases}
\mathbb{R} \text{~for~} i=k+1,\\
0 \text{~for~} i \le k.
\end{cases}
$$
Let $\xi\colon B \to \sigma$ be a bi-Lipschitz homeomorphism. Assume that 
$\omega \in Z^{k+1}_{p_{k+1}}(\sigma)$.
Thanks to the results of \cite{GT}, the de Rham cohomology of a ball coincides with $\Omega_{\pi}$-cohomology.  
It is not hard to see that $\xi^*\omega \in Z^{k+1}_{p_{k+1}}(B)$, indeed, in  accordance with \cite{GKS0}, we have:
$$
d\xi^*\omega = \xi^* d \omega = 0.
$$
And it follows that there exists $\theta \in Z^{k+1}_{p_{k+1}}(B)$  such that each $\xi^*\omega$ can be presented as
$
\xi^*\omega= c \theta + d\eta$,~ $c = \frac {\int_{B}\xi^*\omega}{\int_{B} \theta}$.
Turning back to $\sigma$, we can see the following
$$
\omega = (\xi^{-1})^*\xi^*\omega=  (\xi^{-1})^*\{c \theta\} +  (\xi^{-1})^*\{d\eta\} = c \{(\xi^{-1})^* \theta\} + d \{(\xi^{-1})^*\eta\}.
$$
Analogously, it follows that for any $\omega \in Z^i_{p_i}(\sigma)$,~$i\le k$  there exists such $\eta \in S\mathscr L^{i-1}_{p_{i-1},\,p_i}(B)$ that 
$$
\omega = d(\xi^{-1})^*\eta.
$$
That implies 
$$
H^i_{p_i,\,p_{i+1}}(\sigma,\,\partial \sigma) = \begin{cases}
\mathbb{R} \text{~for~} i=k+1,\\
0 \text{~for~} i \le k.
\end{cases}
$$

Assume that $\omega \in {\rm ker}(\mathscr I)$ is $k+1$-form.
Appealing to the above, we    can say that  \linebreak
$d(S\mathscr L^{k}_{p_{k},\,p_{k+1}}(\sigma, \partial \sigma))$ is precisely a set  of forms which have a zero integral over $\sigma$. 
Since those forms constitute a closed set, as the kernel of $\mathscr I$, so do elements of ${\rm im}(d^{k-1})$. Thereby,  the following map
$
d \colon S\mathscr L^{k}_{p_{k},\,p_{k+1}}(\sigma,\,\partial\sigma) \to {\rm im}(d^{k})
$
is an epimorphism in the category  ${\sf Ban}$,
and,  by the  Banach inverse operator theorem,
there exists a constant $C$
such that for $d \colon \eta \mapsto {\rm res_{\sigma,\,K}}\omega$ we have the estimation  $\|\eta\|_{\Omega_{\infty}(\sigma)}\le C\|{\rm res_{\sigma,\,K}}\omega\|_{\Omega_{\infty}(\sigma)}$.

To summarize, for every $\sigma \in K[k]$ there exists $\eta_{\sigma}\in \Omega_{\infty}(\sigma,\,\partial \sigma)$ such that ${\rm res}_{\sigma,\, K}\omega = d \eta_\sigma$.
Moreover, for the set of forms $\{\eta_\sigma\}_{\sigma \in K[k]}$ holds 
${\rm res}_{\sigma' \cap \sigma,\, \sigma}\eta_\sigma = {\rm res}_{\sigma' \cap \sigma,\, \sigma'}\eta_{\sigma'} = 0$ for any couple 
of simplexes $\sigma,~\sigma' \in K[k]$.  As a result, we have a ${\rm res}_{K[k],\, K}\omega = d \eta$  where ${\rm res}_{\sigma,\, K[k]} \eta = \eta_\sigma$.   

Consider the following complex
$$
\xymatrix{
0\ar[r]&\dots\ar[r]&0\ar[r]&S{\mathscr L}^{k}_{p_{k+1}}(K)\ar[r]&Z^{k+1}_{p_{k+1}}(K)\ar[r]&0\ar[r]&\dots\ar[r]&0
}
$$
Due to the above we have
$$
 \|\eta\|^{p_{k+1}}_{S\mathscr L^*_{p_{k+1}}(K[k])} = \sum_{\sigma \in K[k]}\|\eta_\sigma\|^{p_{k+1}}_{\Omega^*_{\infty}(\sigma)}
 \le  C\sum_{\sigma \in K[k]}\| {\rm res}_{\sigma,\, K}\omega \|^{p_{k+1}}_{\Omega^*_{\infty}(\sigma)} \le C\|\omega\|^{p_{k+1}}_{S\mathscr{L}^k_{p_{k+1}}(K)}.
 $$
Then we have a bounded map 
$
\gamma \colon S\mathscr{L}^{k+1}_{p_{k+1}}(K) \to S\mathscr{L}^{k}_{p_{k+1}}(K[k],K[k-1]).
$
According to the general property, we have embedding  $\ell^{p_{k+1}}(\mathbb N) \hookrightarrow \ell^{p_{k}}(\mathbb N)$ for $p_{k}>p_{k+1}$.
 So  obtained $\eta\in S\mathscr{L}^{k}_{p_{k+1},\,p_{k+1}}(K[k])$, $d \eta = \omega \in {\rm ker}(\mathscr I)$ also lies in $S\mathscr{L}^{k}_{p_{k},\,p_{k+1}}(K[k])$ for
 $p_{k}\ge p_{k+1}$.

Due to   [{\bf Lemma \ref{BtoI}}], for every simplex $\delta \in K[{k+1}]$ and all forms $\alpha \in S\mathscr{L}_{\pi}^k(\partial \delta)$, 
there is a morphism of normed spaces $s \colon S\mathscr{L}_{\pi}^k(\partial \delta)\to S\mathscr{L}_{\pi}^k(\delta)$ continuing forms from the boundary of the simplex to its interior.

In the light of previous steps, it was, in fact, established that \linebreak
$
s\gamma \colon S\mathscr{L}^{k+1}_{\pi}(K) \to S\mathscr{L}^{k}_{\pi}(K[k+1]) 
$
is a bounded map.
Let us look at 
$
\omega'  =   {\rm res}_{K[{k+1}],\, K}\omega - d(s\gamma \omega).
$
It is clear that $\omega'$ is a closed form, as well as $\omega$, and \linebreak $\omega' \in S\mathscr{L}^{k+1}_{\pi}(K[{k+1}],\,K[k])$. The restriction of $\omega'$ to every simplex 
$\sigma \in K[{k+1}]$ is an exact form
since $H^{k+1}(S\mathscr{L}^*_{\pi}(\sigma,\,\partial \sigma))  = 0$.  In other words, ${\rm im}(d^k) = {\rm ker}(d^{k+1})$
and 
$
d \colon S\mathscr{L}^{k}_{\pi}(\sigma,\,\partial\sigma) \to {\rm Im}(d^{k})
$
is an epimorphism in the category  ${\sf Ban}$.
Using  the Banach inverse operator theorem and the above argument,
we can see that there exists 
a bounded 
map 
$
\gamma^1 \colon S\mathscr{L}^{k+1}_{\pi}(K) \longrightarrow S\mathscr{L}^{k}_{\pi}(K[k+1],K[k]).
$
Then we can take 
$
s\gamma^1 \colon S\mathscr{L}^{k+1}_{\pi}(K) \longrightarrow S\mathscr{L}^{k}_{\pi}(K[k+2]) 
$
and so on. In essence, repeating this construction for every dimension, as a result, we obtain a finite composition of bounded operators which can be presented as follows
$
 {\rm ker}(\mathscr I) \to S\mathscr{L}^{k}_{\pi}(K).
$
And moreover, the map $\ker(\mathscr I)$ factors through  ${\rm im}(d^{k+1})$.
\end{proof}

\begin{remark}
For a non-monotonic sequence $\pi$, the following theorem provides us with an example which reveals that the cohomology of complex $\ker {\mathscr I}$ is not trivial. 
In this respect, we can see that  the map $\mathscr I$ loses the property being an isomorphism,
and moreover, 
$$
H^i(\Omega_\pi) \neq 0.
$$
\end{remark}
The crucial idea enclosed in the forthcoming theorem could be sublimated as follows.  It reveals a gap between complexes of Sobolev spaces which are endowed with 
counting or continuous types of measures.   The operator $\mathscr I$ precisely objectifies a chain map between two types of such complexes. Let us consider 
an equation
$$
dx = \omega
$$
where  $d\colon S{\mathscr L}^{k}_{p_{k}}(K) \to S{\mathscr L}^{k+1}_{p_{k+1}}(K)$ and $\omega \in \ker\Big(\mathscr I \colon S{\mathscr L}^{k+1}_{p_{k+1}}(K) \to C^{k+1}_{p_{k+1}}(K)\Big)$.
Due to the results of \cite{GP}, we can find a general solution of the following type
$$
x = \zeta + \ker d
$$  
for  $d\colon S{\mathscr L}^{k}_{p_{k+1}}(K') \to S{\mathscr L}^{k+1}_{p_{k+1}}(K')$. In case $p_k \ge p_{k+1}$, as shown above, $\zeta \in S{\mathscr L}^{k}_{p_{k}}(K')$  as well.
By contrast with that, assuming $p_k < p_{k+1}$, we lose the unconditional property  $\zeta \in S{\mathscr L}^{k}_{p_{k}}(K')$. 
Moreover, there exists such a family of forms $\omega$, for which, a priori, it fails.  Then a natural question arises in this context, if    
 $$\{\zeta+\ker d\} \cap S{\mathscr L}^{k}_{p_{k}}(K') = \varnothing$$
or not. In this respect, the suppose 
 $$\{\zeta+\ker d\} \cap S{\mathscr L}^{k}_{p_{k}}(K') \ne \varnothing$$
 implies a contradiction that is a kind of incarnation of the mentioned gap.  Indeed, let us assume that  
 $$\xi \in \{\zeta+\ker d\} \cap S{\mathscr L}^{k}_{p_{k}}(K').$$
 Moreover, the $p_{k+1} > p_k$ provides $\xi \in S{\mathscr L}^{k}_{p_{k+1}}(K')$. On the other hand, due to \cite{GP} we have a commutative square 
 $$
 \xymatrix{ 
 S{\mathscr L}^{k}_{p_{k+1}}(K')\ar[r]^d \ar[d]_{\mathscr I}& S{\mathscr L}^{k+1}_{p_{k+1}}(K') \ar[d]^{\mathscr I}\\
C^{k}_{p_{k+1}}(K')\ar[r]_{\partial} & C^{k+1}_{p_{k+1}}(K').
 } 
 $$ 
 We know that $\mathscr I$ always takes a $p$-summable form to a $p$-summable simplicial cochain, and moreover, for a counting measure, the differential $\partial$ preserves 
 the property of being summable with power $p$.  In this respect, if we provide $\omega \notin S{\mathscr L}^{k+1}_{p_{k}}(K')$
 in the equation
 $$
 dx=\omega,
 $$
 our hypothesis  
  $$\{\zeta+\ker d\} \cap S{\mathscr L}^{k}_{p_{k}}(K') \ne \varnothing$$
contradicts the properties of the  differential $\partial$.
 
\begin{theorem}\label{NonTriv}
Let $K$ be a noncompact metric simplicial complex with bounded geometry. Given $\pi$ such that $p_k<p_{k+1}$ ($S{\mathscr L}^{k}_{\pi}(K) = S{\mathscr L}^{k}_{p_k,\,p_{k+1}}(K)$), there does not exist an arrow $\ker {\mathscr I} \rightarrow d\big(S{\mathscr L}^{k}_{\pi}\big)$ which makes the following diagram commutative
$$\xymatrix{
d\big(S{\mathscr L}^{k}_{\pi}\big)\ar[rd]&\ker {\mathscr I}\ar[d]\ar[rd]^{0}\ar[l]|-{\times}&\\
S{\mathscr L}^{k}_{\pi}\ar[r]_{d}\ar[u]&S{\mathscr L}^{k+1}_{\pi}\ar[r]_{ {\mathscr I}}&C^{k+1}_{\pi}
}
$$
\end{theorem}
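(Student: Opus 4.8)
The plan is to proceed by contradiction and, in fact, to defeat the diagram on purely set-theoretic grounds. Commutativity of the left-hand triangle says that composing the putative arrow $\ker\mathscr I\to d(S\mathscr L^k_\pi)$ with the inclusion $d(S\mathscr L^k_\pi)\hookrightarrow S\mathscr L^{k+1}_\pi$ must reproduce the inclusion $\ker\mathscr I\hookrightarrow S\mathscr L^{k+1}_\pi$; since the former inclusion is injective, any such arrow is forced to be the set-theoretic inclusion of $\ker\mathscr I$ into $d(S\mathscr L^k_\pi)$. Hence it suffices to exhibit a single $\omega\in\ker\mathscr I$ with $\omega\notin d(S\mathscr L^k_\pi)$: this rules out the arrow, bounded or not, and thereby also yields $H^{k+1}(\Omega_\pi)\neq 0$ as anticipated in the preceding remark.

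\emph{Construction of $\omega$.} I would pass to the first barycentric subdivision $K'$ (which again has bounded geometry) and use the Whitney realization $\tilde{\mathscr W}_{K'}$ of Lemma \ref{Split}, a bounded section of $\mathscr I_{K'}$, so that $\mathscr I_{K'}\tilde{\mathscr W}_{K'}=\mathrm{id}$ on cochains. Using noncompactness, choose an infinite family of $(k+1)$-simplices $\{\sigma_n\}$ of $K$ and, inside the barycentric subdivision of each, a nonzero $(k+1)$-cocycle $b_n$ of $K'$ supported on the sub-simplices of $\sigma_n$ whose sum over $\sigma_n$ vanishes, $\sum_{\sigma'\subset\sigma_n}b_n(\sigma')=0$. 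Each subdivided simplex carries $(k+2)!$ top cells against a single linear constraint, so such $b_n$ exist. Scaling by amplitudes $a_n$ with $(a_n)\in\ell^{p_{k+1}}\setminus\ell^{p_k}$ — available precisely because $p_k<p_{k+1}$ — I set $b=\sum_n a_n b_n$ and $\omega=\tilde{\mathscr W}_{K'}(b)$. Then $\omega$ is closed and $p_{k+1}$-summable, hence $\omega\in S\mathscr L^{k+1}_\pi$; the vanishing of the coarse sums yields $\int_\sigma\omega=\sum_{\sigma'\subset\sigma}b(\sigma')=0$ for every $(k+1)$-simplex $\sigma$ of $K$, so $\omega\in\ker\mathscr I$; and the retraction identity gives $\mathscr I_{K'}\omega=b\in C^{k+1}_{p_{k+1}}(K')\setminus C^{k+1}_{p_k}(K')$.

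\emph{The contradiction.} Suppose instead $\omega=d\xi$ with $\xi\in S\mathscr L^k_{p_k,p_{k+1}}(K)$. Restricting to $K'$ keeps $\xi$ smooth-on-simplices and Lipschitz, and since each top simplex of $K$ splits into boundedly many cells of $K'$, one has $\xi\in S\mathscr L^k_{p_k}(K')$. By the estimate of Lemma \ref{Split} applied on $K'$, $\mathscr I_{K'}\xi\in C^k_{p_k}(K')$. The de Rham square $\mathscr I d=\partial\mathscr I$ of \cite{GP} then gives $\partial(\mathscr I_{K'}\xi)=\mathscr I_{K'}(d\xi)=\mathscr I_{K'}\omega=b$, and because the counting-measure differential $\partial$ is a locally finite signed sum of coordinates it preserves $p$-summability under bounded geometry, so $b\in C^{k+1}_{p_k}(K')$, contradicting $b\notin\ell^{p_k}$. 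This is exactly the mechanism sketched before the statement: solving $dx=\omega$ inside $S\mathscr L^k_{p_k}$ would push $\mathscr I\omega$ down to the smaller exponent $p_k$, which $\partial$ cannot achieve once $\mathscr I\omega$ genuinely lives only at level $p_{k+1}$.

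\emph{Main obstacle.} The delicate point is the construction of $b$: it must at once be a cocycle of $K'$ (so that $\omega$ is closed, placing it in $\ker\mathscr I\subset S\mathscr L^{k+1}_\pi$ rather than merely in $S\mathscr L^{k+1}_{p_{k+1}}$), sum to zero over each coarse simplex (so that $\int_\sigma\omega=0$), and carry an $\ell^{p_{k+1}}\setminus\ell^{p_k}$ amplitude profile. In top degree $k+1=n$ the cocycle condition is automatic and the remaining requirements reduce to one linear constraint per simplex, making the construction transparent; in lower degree I would either support each $b_n$ on a portion of the barycentric star on which the $K'$-coboundary vanishes, or relax \emph{cocycle} to \emph{$\partial b$ is $p_{k+2}$-summable}, which still secures $\omega\in S\mathscr L^{k+1}_\pi$. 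All remaining steps are routine consequences of the boundedness of $\tilde{\mathscr W}$, the retraction identity, and the summability behaviour of $\mathscr I$ and $\partial$ recorded in Lemma \ref{Split}.
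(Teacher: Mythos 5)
Your contradiction mechanism is exactly the paper's: produce an element of $\ker\mathscr I$ whose integrals over the $(k+1)$-cells of the barycentric subdivision $K'$ form a sequence in $\ell^{p_{k+1}}\setminus\ell^{p_k}$, then use the square $\partial\,\mathscr I_{K'}=\mathscr I_{K'}\,d$ together with the boundedness of $\partial$ on $C_{p_k}(K')$ to rule out any primitive lying in $S\mathscr L^{k}_{p_k,\,p_{k+1}}$. The paper realizes the witness as $d\omega$ with $\omega=\sum_i(1/i)^{1/(p_{k+1}-\varepsilon)}\Psi_i\cdot(\text{const.\ coeff.\ }k\text{-form})$, a sum of bump $k$-forms supported in the open stars (in $K'$) of the barycenters of the $(k+1)$-simplices; closedness and $\int_\sigma d\omega_i=\int_{\partial\sigma}\omega_i=0$ are then automatic in every degree, and the failure of $p_k$-summability of $\mathscr I_{K'}(d\omega)$ is checked by the explicit computation of $\int_{[v',v_j]}\Psi_i'$. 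So the two proofs coincide in substance; only the packaging of the witness differs (Whitney realization of a combinatorial cochain versus $d$ of explicit bumps).

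The one genuine gap is the one you flag yourself: for $k+1<n$ your $b_n$ must be a cocycle of $K'$, since otherwise $\omega=\tilde{\mathscr W}(b)$ is not closed, and a non-closed element of $\ker\mathscr I$ defeats the diagram only for the trivial reason that it cannot be exact, which misses the intended content (the subobject $dN$ obstructing contractibility and the nonvanishing of $H^{k+1}(\Omega_\pi)$). Neither workaround is secured: the count ``$(k+2)!$ cells against one linear constraint'' handles only the sum-zero condition, not the cocycle condition, which couples $b_n$ to the $(k+2)$-cells of $K'$ lying in the higher-dimensional simplices incident to $\sigma_n$; and the relaxation to ``$\partial b$ is $p_{k+2}$-summable'' is not automatic when $p_{k+2}<p_{k+1}$ and in any case collapses to the trivial reading. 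The repair that stays inside your framework is to take $b_n=\partial a_n$ for a $k$-cochain $a_n$ of $K'$ supported on the $k$-cells containing the barycenter of $\sigma_n$: then $\omega=\tilde{\mathscr W}(b)=d\tilde{\mathscr W}(a)$ is closed by the chain-map property of $\mathscr W$, its integrals over $(k+1)$-cells of $K'$ outside the subdivided $\sigma_n$ vanish while those inside sum to zero, and it only remains to check, as the paper does for its bumps, that $\partial a$ retains the $\ell^{p_{k+1}}\setminus\ell^{p_k}$ amplitude profile. That modification is precisely the discrete analogue of the paper's $d\omega$, and with it your argument goes through in all degrees.
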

\begin{proof}
Consider a smooth function which behaves like a bump function
$$
\Psi(x) = \begin{cases}
\exp{\left(\frac{1}{|x|^2-1}\right)},~|x|<1;\\
0,~\text{otherwise}.
\end{cases}
$$
For example, it could be a kind of function presented below 
\vskip 10pt

\begin{tikzpicture}
\begin{axis}[
	axis lines=middle,
	enlargelimits=true,
	ymax = 0.6,
	ymin = -0.6,
	xmax = 2.1,
	xmin = -2.1,
	xtick=\empty,
	ytick=\empty
]
\addplot[smooth,black, line width=1.3,domain=-0.999:0.999, samples=100]{e^(-1/(1-x^2))};
\addplot[smooth,black, line width=1.3] table {
	x     y
   -1   0
   -1.8   0
};
\addplot[smooth,black, line width=1.3] table {
	x     y
   1   0
   1.8   0
};
\end{axis}
\end{tikzpicture}
\begin{tikzpicture}
\begin{axis}[ 
	height=10cm,
	width=10cm,
	 axis lines=center,
    	axis on top,
	enlargelimits=true,
		view={110}{35}, 
	colormap/blackwhite,
	domain=-0.9999:0.9999,
	y domain=-0.9999:0.9999,  
	zmax = 0.6,
	zmin = -0.6,
	xmax = 1.6,
	xmin = -1.6,
	ymax = 1.6,
	ymin = -1.6,
	xtick=\empty,
	ytick=\empty,
	ztick=\empty
	]
\addplot3 [surf, fill = white, line width=0.4, unbounded coords=jump]
    { x^2+y^2<1 ? e^(-1/(1-x^2-y^2)): 0 };
\end{axis}
\end{tikzpicture}\\
\vskip 10pt
Let us take a look at  the barycentric subdivision $K'$ of  the complex $K$.  
\vskip 10pt
\begin{center}
\begin{tikzpicture}
\begin{scope}[scale = 1.2]
\draw [densely dotted,pattern=crosshatch dots, pattern color =  gray] (0,2.5*4/3) circle(0.3);
\draw [densely dotted,pattern=crosshatch dots, pattern color =  gray] (0,2.5*2/3) circle(0.3);
\draw [densely dotted,pattern=crosshatch dots, pattern color =  gray] (-1.5,2.5/3) circle(0.3);
\draw [densely dotted,pattern=crosshatch dots, pattern color =  gray]   (1.5,2.5/3) circle(0.3);
\draw[densely dashed,  color = black!60](0,0)--(0,5);
\draw[densely dashed,  color = black!60](-3,0)--(1.5,2.5);
\draw[densely dashed,  color = black!60](3,0)--(-1.5,2.5);
\draw[densely dashed,  color = black!60](-1.5,2.5)--(-1.5,0);
\draw[densely dashed,  color = black!60](1.5,2.5)--(1.5,0);
\draw[densely dashed,  color = black!60](-1.5,2.5)--(0.75,3.75);
\draw[densely dashed,  color = black!60](1.5,2.5)--(-0.75,3.75);
\draw[densely dashed,  color = black!60](0,0)--(-2.25,1.25);
\draw[densely dashed,  color = black!60](0,0)--(2.25,1.25);
\draw[very thick] (-3,0)--(3,0)--(0,5)--(-3,0);
\draw[very thick](-1.5,2.5)--(1.5,2.5)--(0,0)--(-1.5,2.5);
\draw (0.1, 2.9) node[font = \fontsize{8}{10}]  {$\supp\Psi_i$};
\end{scope}
\end{tikzpicture}
\end{center}
\vskip 10pt
Suppose that $\Psi(x)$ is such a bump's type  function supported in a unit ball  ${\bf B}_1 \subset \mathbb R^n$. 
Assume that $f_i$ is a bi-Lipschitz homeomorphism between ${\bf B}_1$ and the star of $K'$, which is assigned to the barycenter $b_i$ of  $\sigma_i \in K[k+1]$ in such a manner that
$b_i \mapsto 0$. Let 
$\Psi_i(x) = f_i^*\Psi(x)$ and  $0<\varepsilon<p_{k+1}-p_{k}$ then
$$
\omega_i = \Big(\frac{1}{i}\Big)^{\frac{1}{p_{k+1}-\varepsilon}}\Psi_i(x)\sum\limits_{ 0\le l_0<\dots<l_{k-1}\le n} dx_{l_0}\wedge\dots\wedge dx_{l_{k-1}}.
$$
We can derive a formula for the norm corresponding to the family of  forms defined above
$$
|\omega_i| =  \sqrt{\binom{n}{k}}\Big(\frac{1}{i}\Big)^{\frac{1}{p_{k+1}-\varepsilon}}\Psi_i(x_0,\dots,\,x_{n}).
$$
And as a result, it specifies the related sequence of maximums assigned to each simplex
$$
\max|\omega_i| =  \sqrt{\binom{n}{k}}\Big(\frac{1}{i}\Big)^{\frac{1}{p_{k+1}-\varepsilon}}.
$$
Then we can see that there exists the following limit in the Banach space $S{\mathscr L}^k_{p_k}$ 
$$
\omega = \lim_{m\to \infty}\sum^m_{i=1}\omega_i 
$$
since
$$
\Big\|\sum^m_{i=1}\omega_i \Big\|_{S{\mathscr L}^k_{p_{k+1}}} = \sqrt{\binom{n}{k}} \Bigg(\sum^m_{i=1}\Big(\frac{1}{i}\Big)^{\frac{p_{k+1}}{p_{k+1}-\varepsilon}}\Bigg)^{\frac{1}{p_{k+1}}}
$$
and 
$$
 \Bigg(\sum^\infty_{i=1}\Big(\frac{1}{i}\Big)^{\frac{p_{k+1}}{p_{k+1}-\varepsilon}}\Bigg)^{\frac{1}{p_{k+1}}}< \infty.
$$
Also, we could notice that for the obtained form, the following holds
$$
\Big\| \lim_{m\to \infty}\sum^m_{i=1}\omega_i \Big\|_{S{\mathscr L}^k_{p_{k}}}  = +\infty
$$    
since
$$
 \Bigg(\sum^\infty_{i=1}\Big(\frac{1}{i}\Big)^{\frac{p_{k}}{p_{k+1}-\varepsilon}}\Bigg)^{\frac{1}{p_{k}}} = +\infty.
$$
For instance, let us take a glance at  the rough asymptotic behaviour of such objects   
\vskip 10pt
\begin{tikzpicture}
\begin{axis}[
	title = $|\omega|(x)\colon \mathbb R \to \mathbb R^+$,
	axis lines=middle,
	enlargelimits=true,
	ymax = 0.6,
	ymin = -0.6,
	xtick=\empty,
	ytick=\empty
]
\addplot[smooth,black, line width=1.3,domain=-0.999:0.999, samples=100]{e^(-1/(1-x^2))};
\addplot[smooth,black, line width=1.3, domain=-2.999:-1.001, samples=100]{(1/2)^(1/3.7)*e^(-1/(1-(x+2)^2))};
\addplot[smooth,black, line width=1.3, domain=1.001:2.999, samples=100]{(1/2)^(1/3.7)*e^(-1/(1-(x-2)^2))};
\addplot[smooth,black, line width=1.3, domain=-4.999:-3.001, samples=100]{(1/3)^(1/3.7)*e^(-1/(1-(x+4)^2))};
\addplot[smooth,black, line width=1.3, domain=3.001:4.999, samples=100]{(1/3)^(1/3.7)*e^(-1/(1-(x-4)^2))};
\addplot[smooth,black, line width=1.3, domain=-6.999:-5.001, samples=100]{(1/4)^(1/3.7)*e^(-1/(1-(x+6)^2))};
\addplot[smooth,black, line width=1.3, domain=5.001:6.999, samples=100]{(1/4)^(1/3.7)*e^(-1/(1-(x-6)^2))};
\end{axis}
\end{tikzpicture}
\begin{tikzpicture}
\begin{axis}[
	title = $|\omega|^{p_{k+1}}(x)\colon \mathbb R \to \mathbb R^+$,
	axis lines=middle,
	enlargelimits=true,
	ymax = 0.6,
	ymin = -0.6,
	xtick=\empty,
	ytick=\empty
]
\addplot[smooth,black, line width=1.3,domain=-0.999:0.999, samples=100]{e^(-1/(1-x^2))};
\addplot[smooth,black, line width=1.3, domain=-2.999:-1.001, samples=100]{(1/2)^(4/3.7)*e^(-1/(1-(x+2)^2))};
\addplot[smooth,black, line width=1.3, domain=1.001:2.999, samples=100]{(1/2)^(4/3.7)*e^(-1/(1-(x-2)^2))};
\addplot[smooth,black, line width=1.3, domain=-4.999:-3.001, samples=100]{(1/3)^(4/3.7)*e^(-1/(1-(x+4)^2))};
\addplot[smooth,black, line width=1.3, domain=3.001:4.999, samples=100]{(1/3)^(4/3.7)*e^(-1/(1-(x-4)^2))};
\addplot[smooth,black, line width=1.3, domain=-6.999:-5.001, samples=100]{(1/4)^(4/3.7)*e^(-1/(1-(x+6)^2))};
\addplot[smooth,black, line width=1.3, domain=5.001:6.999, samples=100]{(1/4)^(4/3.7)*e^(-1/(1-(x-6)^2))};
\end{axis}
\end{tikzpicture}
\\
\begin{center}
\begin{tikzpicture}
\begin{axis}[
	title = $|\omega|^{p_{k}}(x)\colon \mathbb R \to \mathbb R^+$,
	axis lines=middle,
	enlargelimits=true,
	ymax = 0.6,
	ymin = -0.6,
	xtick=\empty,
	ytick=\empty
]
\addplot[smooth,black, line width=1.3,domain=-0.999:0.999, samples=100]{e^(-1/(1-x^2))};
\addplot[smooth,black, line width=1.3, domain=-2.999:-1.001, samples=100]{(1/2)^(2/3.7)*e^(-1/(1-(x+2)^2))};
\addplot[smooth,black, line width=1.3, domain=1.001:2.999, samples=100]{(1/2)^(2/3.7)*e^(-1/(1-(x-2)^2))};
\addplot[smooth,black, line width=1.3, domain=-4.999:-3.001, samples=100]{(1/3)^(2/3.7)*e^(-1/(1-(x+4)^2))};
\addplot[smooth,black, line width=1.3, domain=3.001:4.999, samples=100]{(1/3)^(2/3.7)*e^(-1/(1-(x-4)^2))};
\addplot[smooth,black, line width=1.3, domain=-6.999:-5.001, samples=100]{(1/4)^(2/3.7)*e^(-1/(1-(x+6)^2))};
\addplot[smooth,black, line width=1.3, domain=5.001:6.999, samples=100]{(1/4)^(2/3.7)*e^(-1/(1-(x-6)^2))};
\end{axis}
\end{tikzpicture}
\end{center}

Then let us derive the explicit  expression of what $d\omega$ is.
$$
d\omega_i = \Big(\frac{1}{i}\Big)^{\frac{1}{p_{k+1}-\varepsilon}}  \sum\limits_{ 0\le l_0<\dots<l_{k}\le n} 
\Bigg(\sum\limits_{j\in\{l_0,\dots,\,l_{k}\}}  (-1)^j\frac{\partial}{\partial x_j}\Psi_i(x)\Bigg) dx_{l_0}\wedge\dots\wedge dx_{l_{k}}
$$
$$
| d\omega_i |=  \Big(\frac{1}{i}\Big)^{\frac{1}{p_{k+1}-\varepsilon}}\Bigg(\sum\limits_{ 0\le l_0<\dots<l_{k}\le n} \Bigg|\sum\limits_{j\in\{l_0,\dots,\,l_{k}\}}  (-1)^j\frac{\partial}{\partial x_j}\Psi_i(x)\Bigg|^2\Bigg)^{\frac{1}{2}}
$$
Thereby, we can sum up the above as  
$$
\max|d\omega_i| =  C\Big(\frac{1}{i}\Big)^{\frac{1}{p_{k+1}-\varepsilon}}
$$
$$
\|d\omega\|_{S{\mathscr L}^{k+1}_{p_{k+1}}} = C \Bigg(\sum^m_{i=1}\Big(\frac{1}{i}\Big)^{\frac{p_{k+1}}{p_{k+1}-\varepsilon}}\Bigg)^{\frac{1}{p_{k+1}}}<\infty
$$
$$
\|d\omega\|_{S{\mathscr L}^{k+1}_{p_{k}}} = C \Bigg(\sum^m_{i=1}\Big(\frac{1}{i}\Big)^{\frac{p_{k}}{p_{k+1}-\varepsilon}}\Bigg)^{\frac{1}{p_{k}}} =\infty
$$
Denote by $N$ the set of forms belonging to the  type specified above. 
By design,  we have
$$
\mathscr I\colon \langle N \rangle  \longrightarrow 0
$$
and 
$$
\mathscr I\colon \langle dN \rangle  \longrightarrow 0.
$$

Let us turn to the refinement $K'$. We have to verify that 
$$
{\mathscr I}\langle dN \rangle \subset C^{k+1}_{p_{k+1}}\setminus C^{k+1}_{p_{k}}.
$$
Indeed,
$$
\mathscr I \colon \sigma_i \mapsto \int\limits_{\sigma_i}d\omega_i \le  \Big(\frac{1}{i}\Big)^{\frac{1}{p_{k+1}-\varepsilon}} C,
$$
moreover, the boundness of  geometry guarantees us that  the constant $C$ does not depend on choosing a simplex. 
For example,
\begin{center}
\begin{tikzpicture}
\begin{axis}[
	domain=-0.9999:0.9999,
	axis x line=center,
  	axis y line=none,
	enlargelimits=true,
	ymax = 1.2,
	ymin = -1.2,
	xtick=\empty,
	ytick=\empty,
	extra x ticks={-1.8,0,1.8},
	extra x tick labels= {{$v_0$},{$v'$},{$v_1$}}	
]
\addplot[smooth,black, line width=1.3, samples=100]{(-2*1.6*x*e^(-1/(1-x^2))/((1-x^2)^2))};
\addplot[smooth,black, line width=1.3] table {
	x     y
   -1   0
   -1.8   0
};
\addplot[smooth,black, line width=1.3] table {
	x     y
   1   0
   1.8   0
};
\end{axis}
\end{tikzpicture}
\end{center}
$$
{\mathscr I}\colon[v_0,\,v_1] \mapsto \int^{v_1}_{v_0} \Psi'(x)dx = 0
$$
That holds since $\Psi'(x)dx \in  \ker \mathscr I$, by design. And, respectively, we have the following for  the barycentric subdivision
$$
\tilde{{\mathscr I}}\colon[v',\,v_i] \mapsto \int^{v_i}_{v'} \Psi'(x)dx = (-1)^i C\Big(\frac{1}{k}\Big)^{\frac{1}{p_{k+1}-\varepsilon}},~i = 0, 1.
$$

Let $\omega \in N$. The equation 
$$dx = d\omega$$ 
has the general solution of the following type $\{\omega+\ker d\} \subset S{\mathscr L}^{k}_{p_{k+1}}$. We need to check that 
 $$\{\omega+\ker d\} \cap S{\mathscr L}^{k}_{p_{k}} = \varnothing.$$

Assume that  there exists a closed differential form $\theta$ such that $\omega + \theta = \eta \in S{\mathscr L}^{k}_{p_{k}}\subset S{\mathscr L}^{k}_{p_{k+1}}$
and, respectively, $d\omega = d\eta$.  

\begin{center}
\begin{tikzpicture}
\draw (-5/2,0)--(5/2,0);
\draw (-1/2,5/2)--(1/2,-5/2);
\draw[ thick,- stealth] (0,0) -- (-1-1/50,0);
\draw[ thick,- stealth] (0,0) -- (-1/4,5/4);
\draw[ thick, stealth-] (-1,0) -- (-1/4-1/50,5/4-1/50);
\draw (-5/2, -0.3) node[font = \fontsize{8}{10}]  {$S{\mathscr L}^{k}_{p_{k}}$};
\draw (2, 1.5) node[font = \fontsize{8}{10}]  {$S{\mathscr L}^{k}_{p_{k+1}}$};
\draw (-0.8, 5/2) node[font = \fontsize{8}{10}]  {$\langle N \rangle$};
\draw (0.09, 0.7) node[font = \fontsize{8}{10}]  {$\omega$};
\draw (-0.8, 0.7) node[font = \fontsize{8}{10}]  {$\theta$};
\draw (-1/2, -0.2) node[font = \fontsize{8}{10}]  {$\eta$};
\draw (0.2, -0.2) node[font = \fontsize{8}{10}]  {$0$};
\end{tikzpicture}
\begin{tikzpicture}
\draw (0, 0.8) node[font = \fontsize{8}{10}]  {$d$};
\draw (0, 5/2) node[font = \fontsize{8}{10}]  {$ $};
\draw (0, -5/2) node[font = \fontsize{8}{10}]  {$ $};
\draw[ - to] (-0.5,0.6) -- (0.5,0.6);
\end{tikzpicture}
\begin{tikzpicture}
\draw (-5/2,0)--(5/2,0);
\draw (1/2,5/2)--(-1/2,-5/2);
\draw[ thick,- stealth] (0,0) -- (1/4,5/4);
\draw (-5/2, -0.3) node[font = \fontsize{8}{10}]  {$S{\mathscr L}^{k+1}_{p_{k}}$};
\draw (2, 1.5) node[font = \fontsize{8}{10}]  {$S{\mathscr L}^{k+1}_{p_{k+1}}$};
\draw (0.1, 5/2) node[font = \fontsize{8}{10}]  {$\langle dN \rangle$};
\draw (-0.5, 0.7) node[font = \fontsize{8}{10}]  {$d\omega = d \eta$};
\draw (0.5, -0.2) node[font = \fontsize{8}{10}]  {$0 = d\theta$};
\end{tikzpicture}
\end{center}

As noted above, the maps  $\mathscr I$ and $\partial$ preserve 
 the property of being summable with power $p_k$.
So the essence of what was previously said could be expressed in the commutative diagram
in the category of vector spaces 
$$
\xymatrix@R = 4.5em @C = 2 em{
&C^k_{p_k}\arrow[r]^{\partial}\arrow@{^(->}[d]\arrow@{-->}[rd]&Z^{k+1}_{\text{\fontsize{5}{6}\selectfont s}\,p_{k}}\arrow@{_(->}[d]\\
S{\mathscr L}^{k}_{p_{k}}\arrow@{^(->}[d]\arrow^{\mathscr I}[ru]&C^k_{p_{k+1}}\arrow^{\partial}[r]&Z^{k+1}_{\text{\fontsize{5}{6}\selectfont s}\,p_{k+1}}\\
S{\mathscr L}^{k}_{p_{k+1}}\arrow^{d}[r]\arrow^{\mathscr I}[ru]&Z^{k+1}_{p_{k+1}}\arrow^{\mathscr I}[ru]
&Z^{k+1}_{\text{\fontsize{5}{6}\selectfont s}\,p_{k+1}}\setminus Z^{k+1}_{\text{\fontsize{5}{6}\selectfont s}\,p_{k}}\arrow@{^(->}[u]\\
\langle N \rangle \arrow@{-->}[u]\arrow@{-->}[r]\arrow@{_(->}[u]&\langle dN \rangle\arrow@{_(->}[u]\arrow@{-->}[ru]&
}
$$
 where $Z^{k+1}_{\text{\fontsize{5}{6}\selectfont s}\,p_{i}} = \ker \partial^{k+1}$ and $Z^{k+1}_{p_{j}} = \ker d^{k+1}$.
 Which is induced by  the commutative square
 $$
 \xymatrix{ 
 S{\mathscr L}^{k}_{p_{k+1}}(K')\ar[r]^d \ar[d]_{\mathscr I}& S{\mathscr L}^{k+1}_{p_{k+1}}(K') \ar[d]^{\mathscr I}\\
C^{k}_{p_{k+1}}(K')\ar[r]_{\partial} & C^{k+1}_{p_{k+1}}(K').
 } 
 $$ 

Let us denote $\mathscr I\colon \alpha \mapsto c_{\alpha}$.
By assumption, $\eta \in S{\mathscr L}^{k}_{p_{k}}$ and, in consequence,  $c_{\eta} \in C^{k}_{p_{k}}$. 
Since 
$\partial\colon  C^{k}_{p_{k}} \to  C^{k+1}_{p_{k}}$ is  a bounded operator, 
we have  $\partial c_{\eta} \in C^{k+1}_{p_{k}}$ as well. 
But we know that  ${\mathscr I}(d\omega) \in C^{k+1}_{p_{k+1}}\setminus C^{k+1}_{p_{k}}$.
Conversely, the  commutative diagram tells us that
$$
\partial c_{\eta} = c_{d\eta} = c_{d\omega} = \partial c_{\omega}.
$$
To be more accurate, we should notice that $\omega$ and $d\omega$ are not in $\ker \mathscr I$ for  the refinement $K'$
and also that $\partial c_{\eta} = c_{d\eta}$ holds due to the commutativity of the square  
$$
 \xymatrix{ 
 S{\mathscr L}^{k}_{p_{k+1}}(K')\ar[r]^d \ar[d]_{\mathscr I}& S{\mathscr L}^{k+1}_{p_{k+1}}(K') \ar[d]^{\mathscr I}\\
C^{k}_{p_{k+1}}(K')\ar[r]_{\partial} & C^{k+1}_{p_{k+1}}(K').
 } 
 $$ 
 and mentioned properties of $\mathscr I$ and $\partial$.
As a result,  $$\partial c_{\eta} = \partial c_{\omega}$$
  contradicts  $\partial c_{\omega} \in C^{k+1}_{p_{k+1}}\setminus C^{k+1}_{p_{k}}$ and $\partial c_{\eta} \in C^{k+1}_{p_{k}}$.
That finalises our proof. 
\end{proof}

{\centering
\section{$L_{\pi}$-Cohomology}
}

\begin{definition}
Given a sequence $\pi = \langle p_0,\,p_1,\dots,\,p_n\rangle \subset (1,\,\infty)$,  let there be a chain complex 
in the category of Banach spaces 
$$
\xymatrix{
0\ar[r]&A^0_\pi\ar[r]&\dots\ar[r]&A^{i-1}_\pi \ar[r]^{d^{i-1}}&A^i_\pi\ar[r]^{d^i}&A^{i+1}_\pi\ar[r]&\dots\ar[r]&A^n_\pi\ar[r]&0
}
$$
where $A^{i}_{\pi} = \Omega^i_\pi(K),~S\mathscr{L}^*_{\pi}(K)~\text{or}~C^i_\pi(K)$ and $d$ is a bounded operator satisfying $d^2 = 0$.
We will use the following definition for cohomology 
$$
H^{i}(A_{\pi}) = \frac{\ker\big(A^{i}_{\pi} \to A^{i+1}_{\pi}\big)}{d\big(A^{i-1}_{\pi} \big)},
$$
that specifies an additive functor 
$$
H^i \colon \mathsf{Ch(Ban)} \to  \mathsf{Vec}_{\mathbb R}.
$$
\end{definition}
\begin{theorem}
Assume $K$ is a simplicial complex of bounded geometry and  \linebreak 
$\pi = \langle p_0,\,p_1,\dots,\,p_n\rangle \subset (1,\,\infty)$ is  a non-increasing sequence such that
$\frac{1}{p_{i+1}}-\frac{1}{p_i}\le\frac{1}{n}$,
then the following cohomology groups are isomorphic in the category ${\sf Vec}_{\mathbb R}$:
$$
H^k(C^*_\pi(K)) \cong H^k(\Omega^*_\pi(K)).
$$
\end{theorem}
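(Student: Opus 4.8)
The plan is to splice together the three structural comparisons already established into a chain of isomorphisms
$$
H^k\big(\Omega^*_\pi(K)\big)\;\cong\;H^k\big(S\mathscr{L}^*_\pi(K)\big)\;\cong\;H^k\big(C^*_\pi(K)\big),
$$
relying throughout on the fact, recorded in the preceding definition, that $H^k\colon\mathsf{Ch(Ban)}\to\mathsf{Vec}_{\mathbb R}$ is an additive functor: it sends homotopic chain maps to the same linear map and carries finite direct sums of complexes to direct sums of cohomology. Here the hypothesis $\tfrac{1}{p_{i+1}}-\tfrac{1}{p_i}\le\tfrac1n$ is exactly what makes Lemma~\ref{TGIneq}, and hence all the regularization operators, bounded on the $\pi$-graded complexes, while the non-increasing hypothesis on $\pi$ is what forces $\ker_\pi\mathscr I$ to be acyclic.

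For the left-hand isomorphism I would argue by regularization. By Theorem~\ref{CTriang} and Lemma~\ref{RCorr} the operator $\mathscr R$ factors as $\mathscr R=\epsilon\rho$, with $\rho\colon\Omega^*_\pi\to S\mathscr{L}^*_\pi$ bounded and $\epsilon\colon S\mathscr{L}^*_\pi\hookrightarrow\Omega^*_\pi$ the inclusion, and $\mathscr A$ supplies a chain homotopy $\mathscr R-\mathbf 1_{\Omega_\pi}=d\mathscr A+\mathscr A d$. Thus $\epsilon\rho\sim\mathbf 1_{\Omega_\pi}$, so $H^k(\epsilon)\circ H^k(\rho)=\mathbf 1$; surjectivity of $H^k(\epsilon)$ is then immediate, since for closed $\omega$ one has $[\omega]=[\mathscr R\omega]=H^k(\epsilon)[\rho\omega]$ with $\rho\omega$ a closed $S\mathscr L$-form. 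To upgrade this to an isomorphism I would check the reverse composite $\rho\epsilon=\mathscr R|_{S\mathscr{L}^*_\pi}\sim\mathbf 1_{S\mathscr{L}^*_\pi}$: each local block $(\varphi_i^{-1})^*\mathcal A_\varepsilon\varphi_i^*$ returns a Lipschitz form that is smooth on every simplex, since $(\varphi_i^{-1})^*$ is Lipschitz and piecewise smooth (the observation closing the proof of Lemma~\ref{RCorr}), so $\mathscr A$ preserves $S\mathscr{L}^*_\pi$ and restricts to a homotopy there. Then $\rho$ and $\epsilon$ are mutually inverse homotopy equivalences and $H^k(\rho)$ is an isomorphism.

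For the right-hand isomorphism I would invoke the splitting. By Lemma~\ref{Split} the map $\mathscr I$ is a split epimorphism with Whitney section $\tilde{\mathscr W}$; as both $\mathscr I$ (by Stokes' theorem) and $\tilde{\mathscr W}$ (via the classical identity $d\mathscr W=\mathscr W\partial$) are chain maps, this is a direct-sum decomposition of complexes $S\mathscr{L}^*_\pi\cong C^*_\pi\oplus\ker_\pi\mathscr I$, with $\ker_\pi\mathscr I$ a subcomplex. Applying $H^k$ gives $H^k(S\mathscr{L}_\pi)\cong H^k(C_\pi)\oplus H^k(\ker_\pi\mathscr I)$. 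Since $\pi$ is non-increasing, Proposition~\ref{TrivKer} makes $\ker_\pi\mathscr I$ contractible (its identity is null-homotopic), so $H^k(\ker_\pi\mathscr I)=0$ and $H^k(\mathscr I)$ is an isomorphism onto $H^k(C_\pi)$. Composing, the desired isomorphism $H^k(\Omega_\pi)\cong H^k(C_\pi)$ is induced by the chain map $\mathscr I\rho$ (regularize, then integrate over simplices), with inverse induced by the Whitney embedding $\epsilon\tilde{\mathscr W}$.

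The step I expect to be the main obstacle is the injectivity half of the first isomorphism. The relation $\mathscr R\sim\mathbf 1$ on $\Omega_\pi$ gives surjectivity of $H^k(\epsilon)$ for free, but turning it into a genuine isomorphism forces one to confirm that the contracting data $\mathscr R$ and $\mathscr A$ do not leave the subcomplex $S\mathscr{L}^*_\pi$, i.e. that $\rho$ is a two-sided homotopy equivalence rather than a map merely splitting $H^k(\epsilon)$. Once this and the two remaining structural inputs—Lemma~\ref{Split} and the contractibility flowing from Proposition~\ref{TrivKer}—are in place, the conclusion is purely formal.
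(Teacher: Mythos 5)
Your proposal follows essentially the same route as the paper: the factorization $\mathscr R=\epsilon\rho$ through $S\mathscr L_\pi$ with the homotopy $\epsilon\rho\sim\mathbf 1$ from Theorem~\ref{CTriang}, the splitting $S\mathscr L^*_\pi\cong C^*_\pi\oplus\ker_\pi\mathscr I$ from Lemma~\ref{Split}, and the acyclicity of $\ker_\pi\mathscr I$ from Proposition~\ref{TrivKer}, combined via additivity of $H^k$. The only difference is that you explicitly verify the reverse homotopy $\rho\epsilon\sim\mathbf 1_{S\mathscr L_\pi}$ (equivalently, that $\mathscr A$ preserves the subcomplex) to get injectivity of $H^k(\epsilon)$ — a point the paper's proof leaves implicit — which is a worthwhile addition rather than a deviation.
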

\begin{proof}
As shown in   [{\bf Theorem \ref{CTriang}}], we have a commutative triangle  for chain maps between complexes
 $$
\xymatrix@C = 0.2em{
&S{\mathscr L}_{\pi}\ar@{->>}[rd]^{\epsilon}&\\
\Omega_{\pi}\ar[rr]^{\mathscr R}\ar[ru]^{\rho}&&\Omega_{\pi}
}
$$
and, moreover, a homotopy equivalence 
$$
\epsilon \rho \sim 1.
$$ 
That gives
$$
H^k(\Omega^*_\pi(K)) \cong H^k(S\mathscr{L}^*_{\pi}(K)).
$$
Appealing to  the [{\bf Lemma \ref{Split}}], we can see that the chain map $\mathscr I\colon S\mathscr{L}^*_{\pi}(K) \to C^*_{\pi}(K)$ is a retraction and , as a result, we have
$$S\mathscr{L}^*_{\pi}(K) \cong C^*_{\pi}(K) \oplus {\rm ker}^k( \mathscr I).$$
Combining that with the additivity of $H^i$ and the contractibility of  the complex $\ker^* \mathscr I$, see  [{\bf Proposition \ref{TrivKer}}], we finally get  a series of isomorphisms: 
\begin{align*} 
H^k(\Omega^*_\pi(K)) \cong H^k(S\mathscr{L}^*_{\pi}(K)) &\cong H^k(C^*_{\pi}(K) \oplus \ker^* \mathscr I) \\
\cong  H^k(C^*_{\pi}(K)) \oplus& H^k(\ker ^*\mathscr I) \cong H^k(C^*_{\pi}(K))
\end{align*}
\end{proof}

\end{document}